\newcommand{\Q}{\mathbb{Q}}
\newcommand{\R}{\mathbb{R}}
\newcommand{\N}{\mathbb{N}}
\newcommand{\Z}{\mathbb{Z}}
\newcommand{\F}{\mathbb{F}}
\newcommand{\cRing}{\underline{\textbf{CRing}}}
\newcommand{\redPo}{\textbf{POR/$\sqrt{0}$}}
\newcommand{\lorf}{\mathscr{L}_{\leq}}
\newcommand{\rcr}{\textbf{RCR}}
\newcommand{\rce}{\textbf{rc\'{e}}}
\DeclareMathOperator{\im}{im}
\DeclareMathOperator{\colim}{colim}
\DeclareMathOperator{\supp}{supp}
\DeclareMathOperator{\sper}{Sper}
\DeclareMathOperator{\semf}{\text{semf}}
\theoremstyle{plain}
\newtheorem{thm}{Theorem}
\newtheorem{lemma}[thm]{Lemma}
\newtheorem{prop}[thm]{Proposition}
\newtheorem{cor}[thm]{Corollary}
\theoremstyle{definition}
\newtheorem{defn}[thm]{Definition}
\newtheorem{ex}[thm]{Example}
\newtheorem{remark}[thm]{Remark}
\theoremstyle{remark}
\begin{document}

\begin{center}
\copyright Copyright [2024] \\
\vspace{0.3cm}
Tafari Clarke-James 
\end{center}

\newpage

\begin{titlepage}
   \begin{center}
       \vspace{1cm}

       \textbf{Towards Cohomology of Real Closed Spaces}

       \vspace{0.5cm}
       Tafari Clarke-James
            
       \vspace{1.5cm}
       A dissertation\\ submitted in partial fulfilment of\\ the requirements for the degree of\\
       \vspace{1cm}
       Doctor of Philosophy

       University of Washington - Seattle\\
       2024
       
       Dissertation Committee: \\
       S\'{a}ndor Kov\'{a}cs \\
       Max Lieblich \\
       Kyle Ormsby



       \vfill
            
       Program Authorized to Offer Degree:\\ Theoretical Mathematics
            
       \vspace{0.8cm}

   \end{center}
\end{titlepage}

\newpage

\section*{University of Washington}
\begin{center}
\textbf{Abstract}\\
\vspace{0.5cm}
Towards Cohomology of Real Closed Spaces\\
\vspace{1.5cm}
Chair of Supervisory Committee: \\ S\'{a}ndor Kov\'{a}cs \\ Theoretical Mathematics\\ 
\vspace{2.5cm}
\flushleft It was shown by Claus Scheiderer prior to 1994 that real closed spaces have \'{e}tale cohomology. Following Scheiderer, study of real closed spaces fell out of fashion and o-minimal geometry became the focus for those at the intersection of model theory and geometry. I decided to breathe new life into the theory of real closed rings and spaces, as studied by Schwartz in 1989. In Section 1, I build the fundamentals of the theory using as little machinery as possible, and presented them as clearly as I could. Hidden gems include a full proof that real closed rings are closed under limits and colimits. In Section 2, I give an introduction to the category of real closed spaces in the first half. In the second half, I construct an equivalence of topoi between Scheiderer's sheaves on the real \'{e}tale site, and sheaves on a real \'{e}tale site $\rce/X$ of my creation. Since $\text{Sh}(\rce/X)$ can be defined without the use of $G$-topoi, the equivalence of topoi renders Scheiderer's theory computable. I end with a discussion of how one might use motivic cohomology to better understand recent results of Annette Huber in \cite{no_deRham_huber}. 
\end{center}
\vspace{4cm}

\newpage

\tableofcontents

\newpage

\section{ Sper(-) and Sections}

\subsection{Orders=Cones}

The Goal of this section is to introduce the ideas necessary to define $\text{Sper}(R)$, for $R$ a commutative ring with $1 \in R$, as a topological space with both the weak (read: Zariski) topology and the pro-constructible topology, together with sheaves of compatible and constructible sections. We assume some background in commutative algebra and point set topology, though quite minimal amounts of each.

Let $R$ be a commutative ring with $1 \in R$.

\begin{defn}
A \underline{positive cone of a partial order} $\alpha$ on $R$ is a subset $\alpha \subset R$ such that
\begin{enumerate}
    \item $0 \in \alpha$,
    \item $+_{R}:\alpha \times \alpha \to \alpha$ is a function,
    \item $*_{R}:\alpha \times \alpha \to \alpha$ is a function, and
    \item  $\forall x \in \alpha$, either $x=0$ or $-x \notin \alpha$.
\end{enumerate}
$\alpha$ is the \underline{positive cone of a total order} if we additionally require that
\[ \forall x \in R, x=0 \bigvee \left((x \in \alpha)\bigwedge (-x \notin \alpha)\right) \bigvee \left((-x \in \alpha)\bigwedge (x \notin \alpha)\right). \]
\end{defn}

We say $\alpha \subset R$ \textit{partially orders} $R$ if $\alpha$ is the positive cone of a partial order on $R$, and we say $\alpha$ \textit{totally orders} $R$ if $\alpha$ is the positive cone of a total order on $R$.

We observe that if a subset $\alpha$ of a ring $R$ is the positive cone of a total order on $R$, then $\alpha$ satisfies conditions (1)-(3), meaning $\alpha$ is a commutative semiring and a sub-semiring of $R$. An example of a positive cone of a total order is \[\Z_{\geq 0}:=\{x \in \Z:x \geq 0\}\] for the commutative ring $\Z$.

Reflecting further on $\Z$ with positive cone (short for positive cone of total order) $\Z_{\geq 0}$, we define a binary relation $\prec$ on $\Z$ as follows; for $x,y \in \Z$
\[ x \prec y \qquad \iff \qquad y-x \in \Z_{\geq 0}.  \]
If $x=y$, $y-x=0$, so by (1) in our definition of positive cones, $x \prec x$. If $x \prec y$ and $y \neq x$, then $y-x \in \alpha=\Z_{\geq 0}$, and consequently
\[ -(y-x)=x-y \notin \alpha \]
and $\prec$ is antisymmetric. Finally, if $x \prec y$ and $y \prec z$, 
\[ y-x,z-y \in \alpha=\Z_{\geq 0}, \]
so by property (2), \[(z-y)+(y-x)=z-x \in \alpha=\Z_{\geq 0},\] implying $x \prec z$ and that $\prec$ is transitive.

Thus a positive cone of a total order $\alpha \subset R$ does give a total order $\prec$ on the ring $R$. On the other hand, given a ring $R$ with total order $\prec$,
\[  \{ x \in R: 0 \prec x \} \]
is closed under addition and multiplication by properties of ordered rings, contains 0, and contains no pair $\{y,-y \}$ for $y \in R$, so it \textbf{is} the positive cone of its total order.

We need two more definition which will be handy later;

\begin{defn}
(\cite{convId},1) Let $R$ be a commutative unital ring with positive cone of total order $\alpha$. An ideal $I \leq R$ is $\alpha$-\textit{convex}  if the following sentence holds for all $a,b \in R$;
\begin{center}
If $a,b-a \in \alpha$, and $b \in I$, then $a \in I$.
\end{center}
\end{defn}
 The definition above explains how we can define a notion of convexity in an arbitrary commutative ring.
 
\begin{defn}
(\cite{rcs},2) Let $R$ be a commutative unital ring, with positive cone of partial order $\alpha \subset R$. The \underline{support} of $\alpha$ is the set
\[ \supp(\alpha):=\alpha \cap -\alpha \]
\end{defn}

The support $\supp(\alpha)$ of a positive cone of partial order $\alpha \subset R$ is closed under addition; for $r_1,r_2 \in \supp(\alpha)$, we have $r_{i}=-s_{i}$ for some $s_i \in \alpha$, and consequently
\[ -s_1 + -s_2=-(s_1 + s_2) \in -\alpha. \]
$\supp(\alpha)$ is similarly closed under multiplication because $(-1)^{2}=1$, and $0 \in \supp(\alpha)$. 

\begin{thm}
If $R$ is a ring with positive cone of partial order $\alpha$, then $\supp(\alpha)$ is an ideal in $R$.
\end{thm}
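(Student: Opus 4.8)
The plan is to run through the ideal axioms for the set $I:=\supp(\alpha)=\alpha\cap(-\alpha)$, leaning on what has already been recorded. The discussion preceding the statement gives $0\in I$ and that $I$ is closed under addition (and under multiplication of its own elements, though that last fact is not what an ideal needs). So to see that $I$ is a subgroup of $(R,+)$ it remains only to observe closure under negation, and this is immediate: the description $\alpha\cap(-\alpha)$ is manifestly invariant under $x\mapsto -x$. Hence the whole content of the theorem is the absorption property $rI\subseteq I$ for an arbitrary $r\in R$.

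For absorption, fix $r\in R$ and $s\in I$, so that $s\in\alpha$ and $-s\in\alpha$. The awkwardness is that axiom (3) only multiplies two elements of $\alpha$ together, whereas $r$ is a completely arbitrary ring element; and, in contrast to a total order, a partial order gives no way of writing $r$ as a difference of two elements of $\alpha$. The way out is axiom (4): applied to $x=s$ it forces $s=0$, since the alternative $-s\notin\alpha$ is excluded. Thus every element of $I$ is $0$, i.e. $I=\{0\}$, and then $rs=r\cdot 0=0\in I$ trivially. So $\supp(\alpha)$ is an ideal — concretely, the zero ideal.

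The one step that genuinely needs thought — the place I would flag as the crux — is recognising that axiom (4) is indispensable here, and seeing what survives without it. If (4) is dropped and one keeps only (1)--(3) (even with $-1\notin\alpha$ added), the support can fail to be an ideal: for instance $\alpha=\F_2[x^2]\subseteq\F_2[x]$ satisfies (1)--(3) but not (4), and $\supp(\alpha)=\F_2[x^2]$ is not an ideal of $\F_2[x]$. To salvage the statement in a setting where $\supp(\alpha)$ may be genuinely nonzero, one argues instead from the identity $4rs=(1+r)^2 s+(1-r)^2(-s)$: the two summands lie in $\alpha$ provided $(1\pm r)^2\in\alpha$ — which forces the extra hypothesis that squares lie in $\alpha$ — together with $\pm s\in\alpha$, so $4rs\in\supp(\alpha)$ and likewise $-4rs\in\supp(\alpha)$; then the convexity of $\supp(\alpha)$ (if $a, b-a\in\alpha$ and $b\in\supp(\alpha)$ then $-a=(-b)+(b-a)\in\alpha$, so $a\in\supp(\alpha)$), together with invertibility of $2$, is what clears the factor $4$. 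Since (4) is in force in the present setup, all of this collapses and the proof reduces to the two-line argument above.
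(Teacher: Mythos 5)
Your proof is correct for the theorem as literally stated, but it takes a genuinely different route from the paper's, and the difference is instructive. You take axiom (4) of the definition at face value: any $s\in\supp(\alpha)$ satisfies $s\in\alpha$ and $-s\in\alpha$, so (4) forces $s=0$, hence $\supp(\alpha)=\{0\}$ and absorption is vacuous. The paper never invokes (4): it fixes the multiplication map $R\times\supp(\alpha)\to R$, decomposes $R$ as the disjoint union $(\alpha\setminus\{0\})\cup(-\alpha)$, and checks in the two cases, using $\alpha\cdot\alpha\subseteq\alpha$ and $\alpha\cdot(-\alpha)\subseteq-\alpha$, that the product lands back in $\alpha\cap-\alpha$. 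That decomposition presupposes $\alpha\cup-\alpha=R$, i.e. totality, which a partial-order cone does not provide, and indeed the paper's closing sentence concedes it has proved the statement ``for the positive cone of a total order.'' So the two arguments address two different readings of the hypothesis: yours shows that with (4) in force the statement is trivially true because the support collapses to zero, and your discussion of what must replace (4) when the support is meant to be nontrivial --- closure of $\alpha$ under squares plus the identity $4rs=(1+r)^2s+(1-r)^2(-s)$, or outright totality --- is exactly the setting the paper actually needs later, since the points of $\sper A$ are cones with (possibly nonzero) prime support and such cones cannot satisfy (4). The paper's case split is the standard argument in that nontrivial setting and needs no invertibility of $2$, but it silently strengthens ``partial'' to ``total''; your version is honest about the stated hypotheses but proves only the degenerate case. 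One small blemish in your aside: the example $\alpha=\F_2[x^2]\subseteq\F_2[x]$ does not witness the parenthetical claim that (1)--(3) together with $-1\notin\alpha$ still fail to give an ideal, since $-1=1\in\F_2[x^2]$ there; a characteristic-zero variant such as $\alpha=\Q_{\geq 0}+x^2\,\Q[x^2]\subseteq\Q[x]$, whose support $x^2\,\Q[x^2]$ is not an ideal, does.
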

\begin{proof} Consider the function
\[ *_{R}:R \times \supp(\alpha) \to R. \]
We may decompose $R$ as the disjoint union
\[ R=(\alpha \setminus \{0\}) \cup -\alpha, \]
and we may consider the restrictions
\[ *_{R}|_{\alpha \setminus \{0\}}:(\alpha \setminus \{0\})\times \supp(\alpha) \to R \quad \& \quad *_{R}|_{-\alpha}:-\alpha \times \supp(\alpha) \to R. \]
We know
\begin{align*}
 (\alpha \setminus \{0\})*\supp(\alpha)&=(\alpha \setminus \{0\})*(\alpha \cap -\alpha) \\&=((\alpha \setminus \{0\})*\alpha)\cap ((\alpha \setminus \{0\})*-\alpha),  
\end{align*}
and since $(\alpha \setminus \{0\})*\alpha \subseteq \alpha$ and $(\alpha \setminus \{0\})*-\alpha \subseteq -\alpha$, we may restrict to obtain a multiplication
\[ *_{R}|_{\alpha \setminus \{0\}}:(\alpha \setminus \{0\})\times \supp(\alpha) \to \supp(\alpha). \]
An analogous argument shows we may restrict the codomain of 
\[ *_{R}|_{-\alpha}:-\alpha \times \supp(\alpha) \to R \]
to obtain a multiplication
\[ *_{R}|_{-\alpha}:-\alpha \times \supp(\alpha) \to \supp(\alpha),  \]
and by extension of maps of sets we obtain a multiplication
\[ *_{R}:R \times \supp(\alpha) \to \supp(\alpha).  \]
The above shows, for the positive cone of a total order $\alpha$, that $\supp(\alpha)$ is an ideal of $R$. 
\end{proof}

\subsection{Cones and Homomorphisms}

\begin{thm}
Let $f:A \to B$ be a homomorphism of rings, and let $\beta$ be the positive cone of a partial order on $B$. Then $\alpha:=f^{-1}(\beta)$ is the positive cone of a partial order on $A$.
\end{thm}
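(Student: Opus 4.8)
The plan is to verify the four defining conditions of a positive cone of a partial order for $\alpha = f^{-1}(\beta)$ one at a time, in each case pulling the corresponding property of $\beta$ back along $f$.

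Conditions (1)--(3) should be purely formal. Since $f$ is a ring homomorphism, $f(0) = 0 \in \beta$, hence $0 \in f^{-1}(\beta) = \alpha$, which is (1). For (2) and (3), if $x, y \in \alpha$ then $f(x), f(y) \in \beta$, and since $+_{B}$ and $*_{B}$ restrict to functions $\beta \times \beta \to \beta$, the elements $f(x+y) = f(x) + f(y)$ and $f(xy) = f(x) f(y)$ again lie in $\beta$; so $x+y, xy \in \alpha$, and $+_{A}, *_{A}$ restrict to functions $\alpha \times \alpha \to \alpha$. In passing this shows $\alpha$ is a sub-semiring of $A$, mirroring the remark after the definition of positive cones.

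The real content is condition (4), and I expect this to be the only genuine obstacle. The one extra ingredient is that a ring homomorphism commutes with negation, $f(-x) = -f(x)$, so that for $x \in \alpha$ one has $-x \in \alpha$ precisely when $-f(x) \in \beta$. Given $x \in \alpha$, I would apply condition (4) for $\beta$ to $f(x) \in \beta$: either $-f(x) \notin \beta$, whence $-x \notin \alpha$ and we are finished, or $f(x) = 0$. The delicate case is this last one, $f(x) = 0$ with $x \neq 0$, in which $x$ and $-x$ both land in $\alpha$; the clean way to control it is through the support. Condition (4) for $\beta$ is exactly the statement $\supp(\beta) = \beta \cap (-\beta) = \{0\}$, and since taking preimages along $f$ commutes with intersections and sends $-\beta$ to $-f^{-1}(\beta)$, one gets $\supp(\alpha) = \alpha \cap (-\alpha) = f^{-1}(\beta) \cap f^{-1}(-\beta) = f^{-1}(\supp(\beta)) = f^{-1}(0) = \ker f$. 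So condition (4) for $\alpha$ reduces to $\ker f = \{0\}$, which is automatic when $f$ is injective; for a general homomorphism one would instead relax (4) to the usual requirement that $\supp(\alpha)$ be a prime ideal and invoke that $\supp(f^{-1}\beta) = f^{-1}(\supp\beta)$ is prime because the preimage of a prime ideal is prime. In either formulation, the crux of the proof — and the step I would take the most care over — is this identity $\supp(f^{-1}\beta) = f^{-1}(\supp\beta)$, together with the compatibility of $f^{-1}(-)$ with negation.
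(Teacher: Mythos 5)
Your verification of conditions (1)--(3) is exactly the paper's argument: $f(0)=0$ gives (1), and additivity and multiplicativity of $f$ give closure of $f^{-1}(\beta)$ under $+_A$ and $*_A$. Where you part ways with the paper is condition (4), and your more cautious treatment is in fact sharper than the paper's own. The paper simply runs the chain $a\in\alpha \iff f(a)\in\beta \implies -f(a)\notin\beta \iff f(-a)\notin\beta \iff -a\notin\alpha$, and the middle implication silently assumes $f(a)\neq 0$; it never addresses the case you isolate, namely $x\neq 0$ with $f(x)=0$, where both $x$ and $-x$ lie in $f^{-1}(\beta)$. Your computation $\supp(f^{-1}(\beta)) = f^{-1}(\beta)\cap -f^{-1}(\beta) = f^{-1}(\supp(\beta)) = \ker f$ is correct, and it shows that condition (4), read literally as $\supp(\alpha)=\{0\}$, survives pullback only when $\ker f = 0$ --- consistent with the paper's later lemma that reserves the \emph{total}-order pullback statement for injective homomorphisms. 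So against the literal definition you have identified a genuine gap in the statement and in the paper's proof, not merely in your own attempt; the paper's subsequent use of this theorem (applying it to $\rho^{-1}(F_+)$ for a homomorphism $\rho:A\to F$ with possibly nonzero kernel, whose support is exactly $\ker\rho$) makes clear that the intended notion of cone tolerates nontrivial support, which is precisely the relaxation you propose, and the pullback identity $\supp(f^{-1}(\beta)) = f^{-1}(\supp(\beta))$ you flag as the crux is the same identity the paper itself invokes in the following subsection. One small refinement: primality of the support is the condition for points of $\sper$, not for cones of partial orders in general, so the honest fix is simply to weaken (4) to ``$\supp(\alpha)$ is an ideal'' (automatic here) rather than to demand it be prime. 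In short, your (1)--(3) match the paper, and on (4) your support-based argument is the complete version of a step the paper elides.
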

\begin{proof} 
We know $0 \in \alpha$ because ring homomorphisms map 0 to 0. Let $a_1,a_2 \in \alpha$. We have 
\begin{align*}
a_1+a_2 \in f^{-1}(\beta) &\iff f(a_1+a_2) \in \beta \\ &\iff f(a_1)+f(a_2) \in \beta.  
\end{align*}
Since $a_1,a_2 \in f^{-1}(\beta)$, $f(a_1) \, \& \, f(a_2)$ are in $ \beta$, as is $f(a_1)+f(a_2)$. We have thus showed $+_{A}:\alpha \times \alpha \to \alpha$ is a function, and the same argument shows $*_{A}:\alpha \times \alpha \to \alpha$ is a function. Lastly, 
\begin{align*}
 a_1 \in \alpha &\iff f(a_1) \in \beta \\ &\implies -f(a_1) \notin \beta \\ &\iff f(-a_1) \notin \beta \\ &\iff -a_1 \notin f^{-1}(\beta)=\alpha,
\end{align*}
and we've demonstrated positive cones of partial orders pull back to positive cones of partial orders under ring homomorphisms. 
\end{proof}

Now, we might ask whether the positive cones of \textit{total} orders on $B$ are pulled back to positive cones of total orders under ring homomorphisms $f:A \to B$. In general, they are not; 

\begin{ex}
Consider the $\R$-algebra homomorphism $f:\R[t] \to \R$ which maps $t \mapsto 3$. $\R$ is totally ordered with positive cone $\R_{\geq 0}$, and
\[ f^{-1}(\R_{\geq 0})=\R_{\geq 0}[t]. \]
If $\R_{\geq 0}[t]$ is the positive cone of a \textit{total} order, any two elements of $\R[t]$ must be comparable under that order. For the polynomials $t^{2}+1$ and $2t^{2}$,
\[ (t^{2}+1)-(2t^{2})=1-t^{2} \notin \R_{\geq 0}[t], \]
and 
\[  (2t^{2})-(t^{2}+1)=-(1-t^{2})=t^{2}-1 \notin \R_{\geq 0}[t], \]
so the existence of this pair demonstrates that the total order $\geq$ on $\R$ only pulls back to a partial order on $\R[t]$ via $f$.
\end{ex}

\begin{lemma}
If $f:A \to B$ is an injective ring homomorphism, and $\beta$ is the positive cone of a total order on $B$, $f^{-1}(\beta)$ is the positive cone of a total order on $A$.
\end{lemma}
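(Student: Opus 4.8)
The plan is to lean on the theorem just proved, which already shows that $\alpha := f^{-1}(\beta)$ is the positive cone of a \emph{partial} order on $A$; this hands us conditions (1)--(4) of the definition for free. So the only thing left to check is the extra trichotomy clause that upgrades a partial order to a total one: for every $x \in A$, either $x = 0$, or $x \in \alpha$ with $-x \notin \alpha$, or $-x \in \alpha$ with $x \notin \alpha$.

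To verify this, I would fix $x \in A$ and push it forward to $f(x) \in B$. Since $\beta$ is the positive cone of a total order on $B$, the element $f(x)$ satisfies the trichotomy in $B$: either $f(x) = 0$, or $f(x) \in \beta$ and $-f(x) \notin \beta$, or $-f(x) \in \beta$ and $f(x) \notin \beta$. I would then transport each case back along $f$, using that $f$ is a ring homomorphism, so $f(-x) = -f(x)$, and that by definition $y \in \alpha \iff f(y) \in \beta$. The second and third cases translate immediately: $f(x) \in \beta \iff x \in \alpha$, and $-f(x) = f(-x) \notin \beta \iff -x \notin \alpha$, and symmetrically for the third case. Since the three cases are exhaustive for $f(x)$ in $B$, the corresponding three conclusions are exhaustive for $x$ in $A$.

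The one case that actually uses the injectivity hypothesis is $f(x) = 0$: here I would invoke $f(0) = 0$ together with injectivity of $f$ to conclude $x = 0$, which is precisely the first clause of the trichotomy. I expect this to be the only real subtlety — and it is less an obstacle than a recognition that injectivity is needed exactly and only to control the kernel. Indeed, the earlier example $f : \R[t] \to \R$, $t \mapsto 3$, is non-injective precisely because its kernel is nonzero, and any nonzero kernel element lies in $\alpha \cap -\alpha = \supp(\alpha)$ and is thus comparable to nothing, which is what destroys totality there. With injectivity in hand, $\supp(\alpha)$ collapses to $\{0\}$ at the relevant step and the argument goes through, so $f^{-1}(\beta)$ is the positive cone of a total order on $A$.
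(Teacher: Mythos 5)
Your proposal is correct and takes essentially the same route as the paper: both lean on the preceding theorem for the partial-order cone conditions and then transport the trichotomy of $\beta$ back along $f$, with injectivity serving exactly to dispose of the $f(x)=0$ case. The paper merely phrases this more compactly, fixing a nonzero $a$ with $-a \notin f^{-1}(\beta)$ and using injectivity (so $f(a) \neq 0$) together with totality of $\beta$ to conclude $f(a) \in \beta$, hence $a \in f^{-1}(\beta)$.
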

\begin{proof}
 We already know $f^{-1}(\beta)$ is the total cone of a partial order on $A$. Fix a nonzero $a \in A$, and suppose $-a \notin f^{-1}(\beta)$. We want to show $a \in f^{-1}(\beta)$  We then know $f(-a)=-f(a) \notin \beta$, and consequently $f(a) \in \beta$. Since $f$ is injective, $f(a) \neq 0$. If $f(a) \in \beta$, $a \in f^{-1}(\beta)$ by definition, proving $f^{-1}(\beta)$ is the positive cone of a total order on $A$. 
 \end{proof}

\subsection{Introducing Pre-Geometry}

Let $f:A \to B$ be a ring homomorphism, and let $\beta \subset B$ such that $\beta/\supp(\beta)$ is the positive cone of a total order on $B/\supp(\beta)$, and $\supp(\beta)$ is prime. We will see shortly that subsets $\beta \subset B$ with prime support, such that $\beta/\supp(\beta) \subset B/\supp(\beta)$ totally orders $B/\supp(\beta)$, are the right objects to form a locally ringed space from! Since the inverse image of a prime ideal is prime under ring homomorphism, we obtain a commutative diagram of rings 
\begin{center}
\begin{tikzcd}
A \ar[r,"f"] \ar[d] & B \ar[d] \\ A/f^{-1}(\supp(\beta)) \ar[r,dotted,"\exists \bar{f}"] & B/\supp(\beta).
\end{tikzcd}
\end{center}
Since 
\begin{align*}
f^{-1}(\beta \cap -\beta)&=f^{-1}(\beta) \cap f^{-1}(-\beta)\\&= f^{-1}(\beta) \cap -f^{-1}(\beta),
\end{align*}
the support of the cone  of partial order $f^{-1}(\beta)$ is the prime ideal $f^{-1}(\supp(\beta))$, and we may extend the diagram above to
\begin{center}
\begin{tikzcd}
& A \ar[r,"f"] \ar[d] \ar[ld] & B \ar[d] \\ A/\supp(f^{-1}(\beta)) \ar[r,"\cong"]  & A/f^{-1}(\supp(\beta)) \ar[r,dotted,"\exists \bar{f}"] & B/\supp(\beta).
\end{tikzcd}
\end{center}

Let $\overline{a}$ denote the image of an element $a\in A$ under projection, and let $\overline{\alpha}$ denote the image of a subset $\alpha \subset A$ under projection (and analogously for $B$). I claim $\overline{f^{-1}(\beta)}$ is the positive cone of a total order on $A/\supp(f^{-1}(\beta))$. Suppose $\bar{a} \in A/\supp(f^{-1}(\beta))$ with $-\bar{a} \notin \overline{f^{-1}(\beta)}$. We then have 
\[\bar{f}(-\bar{a}) \notin \bar{\beta}, \]
and since $\bar{\beta}$ totally orders $B/\supp(\beta)$, 
\[ \bar{f}(\bar{a}) \in \bar{\beta}.  \]
By commutativity of the above diargram, $f(a) \in \beta$, and so 
\[ a \in f^{-1}(\beta). \]
Taking equivalence classes, $\bar{a} \in \overline{f^{-1}(\beta)}$, which proves $\bar{\beta}$ pulls back to the positive cone of a total order $\overline{f^{-1}(\beta)}$ on $A$.

The following example illustrates the above technique with $A=R$ and $f=\text{id}_{R}$:

\begin{ex}
Let $\alpha \subseteq \beta \subset R$, where $\overline{\alpha} \subset R/\supp(\alpha)$ totally orders the integral domain $R/\supp(\alpha)$, and $\overline{\beta} \subset R/\supp(\beta)$ totally orders the integral domain $R/\supp(\beta)$. We have a diagram
\begin{center}
\begin{tikzcd}
R \ar[d,swap,"\pi_{\alpha}"] \ar[rd,"\pi_{\beta}"] & \\ R/\supp(\alpha) & R/\supp(\beta),
\end{tikzcd}
\end{center}
and since $\supp(\alpha) \subseteq \ker(\pi_{\beta})$,
we may pass $\pi_{\beta}$ to the quotient to obtain a commutative diagram
\begin{center}
\begin{tikzcd}
R \ar[d,swap,"\pi_{\alpha}"] \ar[rd,"\pi_{\beta}"] & \\ R/\supp(\alpha) \ar[r,dotted,swap,"\overline{\pi_{\beta}}"] & R/\supp(\beta).
\end{tikzcd}
\end{center}

We may now totally order $R/\supp(\alpha)$ using either $\overline{\alpha}$ or $\overline{\pi_{\beta}}^{-1}(\overline{\beta})$. On the one hand, if $\overline{r} \in \overline{\alpha}$, fix an equivalence class representative $r$ of $\overline{r}$. Since $\alpha \subseteq \beta$, \[ \pi_{\beta}(r) \in \overline{\beta} \subset R/\supp(\beta). \]
We have now shown \[  \overline{\alpha} \subseteq \overline{\pi_{\beta}}^{-1}(\overline{\beta}).  \]
If there is an $x$ such that $x \in \overline{\pi_{\beta}}^{-1}(\overline{\beta})$ and $x \notin \overline{\alpha}$, since $\overline{\alpha}$ totally orders $R/\supp(\alpha)$, we must have $-x \in \overline{\alpha}$. We know $\overline{\alpha} \subseteq \overline{\pi_{\beta}}^{-1}(\overline{\beta})$, so $\{x,-x\} \subset \overline{\pi_{\beta}}^{-1}(\overline{\beta})$, forcing $x=0$. As $0$ \textit{is} an element of $\overline{\alpha}$ by definition, we have reached a contradiction and
\[ \overline{\alpha} = \overline{\pi_{\beta}}^{-1}(\overline{\beta}).  \]
\end{ex}

\subsection{The Topologies of Sper(-)}

We can now define the points of $\sper(A)$ for any commutative unital ring $A$. 

\begin{defn}
(\cite{rcs},2) Let $A$ be a commutative unital ring. The elements (read: points) of the real spectrum $\text{Sper}(A)$  are subsets $\alpha \subset A$ such that
\begin{enumerate}
    \item $\supp(\alpha)=\alpha \cap -\alpha$ is a prime ideal of $A$, and
    \item $\alpha/\supp(\alpha)$ is the positive cone of a total order on $A/\supp(\alpha)$.
\end{enumerate}
\end{defn}

We note that there is a map $\supp:\sper(A) \to \text{Spec}(A)$ given by $\alpha \mapsto \alpha \cap -\alpha$. I like to think of the image of $\supp$ as 'the set of prime ideals which are boundaries of positive cones of total order'. As we shall see in dealing with semialgebraic sets over a real closed field, the prime ideals correspond to varieties, and positive cones of total order correspond to regions these real varieties bound.

For the duration of this paper, wherever I specify a ring, I intend it to be commutative and unital. I may be less precise in my algebra moving forward, sometimes using the notation $a \in A/\supp(\alpha)$, for some $\alpha \in \sper(A)$, to mean $\pi_{\alpha}(a) \in A/\supp(\alpha)$. Fix a ring $A$. To define a topology on $\sper(A)$, basic open sets in $\sper(A)$ should be given by the nonvanishing of some 'function'. We will define the weak topology on $\sper(A)$ which is analogous to the Zariski topology, and the proconstructible topology which is analogous to the smallest topology on $\sper(A)$ containing the constructible subsets of $\sper(A)$ in the weak topology as closed sets. We explain the weak topology first, as it is required to define the proconstructible topology.

Let $a \in A$ and $\alpha \in \sper(A)$. We wish to evaluate 'the function' $a$ at the point $\alpha \in \sper(A)$. We have a ring homomorphism
\begin{center}
\begin{tikzcd}
A \ar[d,"\pi_{\alpha}"] & & \\ A/\supp(\alpha) \ar[r,hook] & \left(A/\supp(\alpha)\right)_{\langle 0 \rangle}, & 
\end{tikzcd}
\end{center}
and we fix the notation \[\kappa(\alpha):=\left(A/\supp(\alpha)\right)_{\langle 0 \rangle}. \]
I claim $\kappa(\alpha)$ is a totally ordered field, and I will show this by extending the total ordering on $A/\supp(\alpha)$. Fix distinct $a/b,c/d \in \kappa(\alpha)$, and define the following total order on $\kappa(\alpha)$;
\[ \frac{c}{d} \preceq \frac{a}{b} \iff ad-bc \in \overline{\alpha}=\pi_{\alpha}(\alpha).  \]
Since either $ad-bc \in \overline{\alpha}$ or  $bc-ad \in \overline{\alpha}$, and both are in $\overline{\alpha}$ precisely when 
\begin{align*}
& ad-bc=0 \\ \iff & ad=bc \; \text{ in } A/\supp(\alpha) \\ \iff & \frac{a}{b}=\frac{c}{d} \; \text{ in } \kappa(\alpha).
\end{align*}
I leave it as an exercise to show $\preceq$ has transitivity, and that the properties of addition and multiplication by an element of the positive cone play well with this ordering.

We have now established $\kappa(\alpha)$ is a totally ordered field, so by The Artin-Schreier Theorem, there is a unique algebraic extension $\kappa(\alpha) \subseteq \rho(\alpha)$, contained in the algebraic closure of $\kappa(\alpha)$ such that
\begin{itemize}
\item $\rho(\alpha)$ is a totally ordered field, and the ordering of $\rho(\alpha)$ extends the ordering of $\kappa(\alpha)$,
\item for all $x \in \rho(\alpha)$ with $0\preceq x$, there is an element $y \in \rho(\alpha)$ such that $y^{2}=x$.
\item Every polynomial $f(t) \in \rho(\alpha)[t]$ of odd degree has a root in $\rho(\alpha)$.
\end{itemize}
We call ordered fields of this kind \textit{real closed fields}, and there are many equivalent definitions of them. We call the final property the \textit{intermediate value property}. As examples, $\R$ and $\R_{alg}:=\R \cap \overline{\Q}$ are both real closed fiels. Please see pages 7-17 in \cite{rag} for further fundamental results on real closed fields.

We may now extend our previous diagram;
\begin{center}
\begin{tikzcd}
A \ar[d,swap,"\pi_{\alpha}"] \ar[rrd,dotted,"\rho_\alpha"] & & \\ A/\supp(\alpha) \ar[r,hook] & \kappa(\alpha) \ar[r,hook]  & \rho(\alpha).
\end{tikzcd}
\end{center}
We think of $\rho_\alpha(a)$ as the evaluation of the function $a$ at the point $\alpha$, and denote it $a(\alpha)$.

Let $R$ be a subring of $\rho(\alpha)$. Since $\rho(\alpha)$ is a ring containing $R$ with the intermediate value property, by Zorn's Lemma, there is a smallest ring $C(R)$ such that $R \subseteq C(R) \subseteq \rho(\alpha)$ and $C(R)$ has the intermediate value property.

\begin{defn}
If $R$ is a subring of a real closed field $\rho$, the \textit{convex hull} $C(R)$ of $R$ in $\rho$ is the smallest subring (by containment) of $\rho$ containing $R$ in which the intermediate value property holds. Generally, we may describe any totally ordered ring $C$ in which the intermediate value property holds as \textit{convex}. 
\end{defn}

Convex ideals are especially well-behaved in convex rings;
\begin{prop}
 Let $C$ be a convex ring with positive cone $C_+$, and let $I$ be a nonzero convex ideal of $C$. Then there exists a ring element $b \in C$ such that 
 \[ -b \leq x \leq b  \]
 for all $x \in I$.
\end{prop}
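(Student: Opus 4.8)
The plan is to verify directly that $b = 1$ already satisfies the conclusion. The intermediate value property of $C$ plays no role: all that is used is that $C$ is a nonzero totally ordered unital ring and that $I$ is a \emph{proper} $C_+$-convex ideal. (The properness is genuinely needed: if $I = C$ then no $b \in C$ can satisfy the conclusion --- otherwise $b + 1 \le b$, forcing $1 \le 0$ in the nonzero ring $C$ --- so I read ``ideal'' here as ``proper ideal'', whence $1 \notin I$. Without that reading one can only conclude that $I$ is bounded by $1$ or else $I = C$.)

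First I would record two routine facts. Since $C$ contains the nonzero element furnished by $I$, it is a nonzero totally ordered unital ring, so $1 \in C_+$ and $1 \neq 0$. Second, because $I$ is an ideal it is closed under negation, so for each $x \in I$ the element $|x| := \max(x, -x)$ --- defined because the order is total, and equal to $x$ or $-x$ --- again lies in $I$, and satisfies $0 \le |x|$.

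The single nontrivial step is the claim that $|x| \le 1$ for every $x \in I$. Suppose not; then $1 < |x|$ for some $x \in I$, so $1 \in C_+$, $\;|x| - 1 \in C_+$, and $|x| \in I$. Convexity of $I$ --- the defining implication applied with $a = 1$ and $b = |x|$ --- then forces $1 \in I$, contradicting $1 \notin I$. Hence $-1 \le x \le 1$ for all $x \in I$, which is precisely the assertion with $b = 1$. I do not expect a genuine obstacle here: the whole content is that a convex ideal containing a positive element $p$ contains the entire segment from $0$ to $p$, and is therefore bounded by any positive element of $C$ it omits --- and $1$ is such an element exactly because $I$ is proper.
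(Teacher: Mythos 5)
Your proof is correct, and at its core it runs on the same mechanism as the paper's: apply the convexity implication with $a$ a positive element and $b$ an element of $I$ lying above it, and conclude $1 \in I$, which is absurd. The difference is one of packaging. The paper argues by contradiction --- if $I$ were unbounded, every $a \in C_+$ would lie below some element of $I$, whence $C_+ \subseteq I$ and in particular $1 \in I$ --- and so never exhibits a bound, whereas you run the contrapositive directly and extract the explicit bound $b = 1$, a mild but genuine sharpening. Your observation that the intermediate value property of $C$ plays no role is accurate and applies equally to the paper's argument, which uses only the total order and convexity. Your properness caveat is also well taken: the statement says only ``nonzero convex ideal,'' yet the paper's proof treats $1 \in I$ as the contradiction, so it is tacitly assuming $I \neq C$; as you note, the claim is false for $I = C$ (no $b$ can dominate $b+1$), so the proper-ideal reading you make explicit is exactly the one the paper intends.
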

\begin{proof}
Firstly, note that if $b \in C$ bounds $I$ from above, then $-b$ bounds it from below, so it is sufficient to show $I$ has an upper bound. Towards a contradiction, suppose $I$ is unbounded. Let $a \in C_+$, and since $I$ is unbounded, choose $b \in I$ such that $b \geq a$. Since 
\[ a \in C_+, \, b-a \in C_+, \, \text{ and } b \in I \]
by convexity of $I$, $a \in I$. We chose $a$ arbitrarily, and so 
\[ 1 \in C_+ \subseteq I, \]
a contradiction.
\end{proof}

Perhaps more impressively, the convex prime ideals form a chain in any convex ring;
\begin{prop}
Let $C$ be a convex ring with positive cone $C_+$, and suppose $p_0$ and $p_1$ are nonzero convex prime ideals. Then either $p_0 \subseteq p_1$ or $p_1 \subseteq p_0$.
\end{prop}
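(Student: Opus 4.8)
The plan is to set primeness aside and prove the sharper statement that any two convex ideals of $C$ (in particular $p_0$ and $p_1$) are comparable, by a direct order-theoretic argument using only the definition of $\alpha$-convexity with $\alpha = C_+$ and the totality of the order. Suppose $p_0 \not\subseteq p_1$; I will show $p_1 \subseteq p_0$. Fix a witness $a \in p_0 \setminus p_1$. Since $p_1$ is an ideal it contains $0$, so $a \neq 0$; and since $p_0$ and $p_1$ are both closed under negation, after replacing $a$ by $-a$ if necessary — using that in a totally ordered ring one of $a$, $-a$ lies in $C_+$ — I may assume $a \in C_+$ with $a \neq 0$, i.e.\ $a > 0$, while still $a \notin p_1$.

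Next I take an arbitrary $x \in p_1$ and aim to show $x \in p_0$. Because $x \in p_0 \iff -x \in p_0$ and likewise $-x \in p_1$, I may again assume $x \in C_+$. The crucial observation is that $x < a$: were $a \le x$ instead, then $a \in C_+$ and $x - a \in C_+$ with $x \in p_1$, so $p_1$-convexity would force $a \in p_1$, contradicting the choice of $a$. Having established $0 \le x < a$, I apply convexity of $p_0$ in the other direction: $x \in C_+$, $a - x \in C_+$, and $a \in p_0$, hence $x \in p_0$. As $x$ was an arbitrary element of $p_1$, this yields $p_1 \subseteq p_0$.

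I do not expect any real obstacle: the whole argument is just two applications of the definition of $\alpha$-convexity, spliced together by totality of the order, and it uses neither that the ideals are prime, nor that $C$ satisfies the intermediate value property, nor the preceding boundedness proposition. The only points needing a little care are the reductions to nonnegative representatives — namely that $x$ lies in an ideal iff $-x$ does, and that in a totally ordered ring exactly one of $x \in C_+$, $-x \in C_+$ holds modulo $x = 0$. One could instead deduce the statement from the fact that a convex subring of a real closed field is a valuation ring of its fraction field, whose ideals are automatically linearly ordered by inclusion, but that is far heavier than necessary and, worse, would require a concrete embedding of $C$ into some $\rho(\alpha)$ rather than arguing from the abstract axioms of a convex ring.
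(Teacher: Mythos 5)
Your proof is correct, and it is genuinely different from the one in the paper. The paper argues by running an algorithm that produces ever larger elements of $p_0 \cup p_1$, invokes the boundedness of convex ideals from the preceding proposition together with the Hausdorffness of the order topology to claim the algorithm terminates at a limit $x_\infty$, and then applies convexity to the terminal element; it also leans on the hypotheses that the ideals are nonzero and (implicitly) on the ambient convexity machinery. Your argument instead fixes a witness $a \in p_0 \setminus p_1$, normalizes signs using that ideals are closed under negation and that the order is total, and then makes exactly two applications of the definition of $C_+$-convexity: one to show every nonnegative $x \in p_1$ satisfies $x < a$ (else $a \in p_1$), and one to push such $x$ into $p_0$. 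This is sharper and more robust: it proves that \emph{any} two convex ideals of a totally ordered ring are comparable, using neither primeness, nor nonzero-ness, nor the boundedness proposition, nor the intermediate value property, nor any topological limit argument. It also sidesteps the weakest part of the paper's proof, namely the informal claim that the zig-zag of elements converges and that the algorithm terminates, which is not really justified there (general ordered rings need not have least upper bounds, and the final case $p_0=(-x_\infty,x_\infty)=p_1$ is dubious as stated). In short, your two-line convexity argument is both more elementary and more general than the paper's, at the cost only of the small sign-normalization bookkeeping you already flag.
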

\begin{proof}
 We first use a simple algorithm to determine which prime contains the other. The proof, following the algorithm, is straightforward.
\begin{enumerate}[i.]
\item Without loss of generality, fix $x \in p_0$, 
\item If $x \geq y$ for all $y \in p_{1}$, return $x$.
\item If not, there is an element $z_1 \in p_1$ such that $z_1 > x$.
\item If $z_1 \geq y$ for all $y \in  p_0$, return $z_1$.
\item If not, there is a $z_2 \in p_0$ such that $z_2>z_1$
\item Update $z_2 \mapsto x$ and proceed from ii.
\end{enumerate}
The algorithm above produces greater and greater elements of $p_1 \cup p_2$, and since $p_i$ is bounded for all $i$ by the previous proposition, so is $p_0 \cup p_1$. Since the order topology on $C$ is Hausdorff and $C$ is convex, the $z_i$'s have a limit and the algorithm terminates.
Let $x_{\infty} \in C$ denote the element on which the algorithm terminates. 
Since $x_{\infty} \in p_0 \cup p_1$, without loss of generality suppose $x_\infty \in p_1$. I claim that in this case, $p_0 \subseteq p_1$. If $a \in p_0$, one of $a$ and $-a$ are in $C_+$, so we assume $a \geq 0$ for ease of notation. By convexity, since
\[  a \in C_+, \, x_\infty-a \in C_+, \text{ and } x_\infty \in p_1 , \]
we know $a \in p_1$, proving this case.
If $x_{\infty} \notin p_0 \cup p_1$, $x_{\infty}$ must be the least upper bound for $p_0$ and $p_1$, implying that 
\[  p_0=(-x_{\infty},x_{\infty})=p_1,  \]
concluding the second and final case in this proof.
\end{proof}

Let 
\[ D(a)=\{\alpha \in \sper(A): a(\alpha)>0\}. \] 
If $a_1,a_2 \in A$, then
\[ D(a_1) \cap D(a_2)=\{\alpha \in \sper(A): a_{1}(\alpha)>0 \text{ and } a_{2}(\alpha)>0\}, \]
which is defined to be $D(a_{1},a_{2})$. Similarly, for any collection of elements $\{a_\lambda\}_{\lambda \in \Lambda} \subset A$, we have
\begin{align*}
 \bigcup_{\lambda \in \Lambda}D(a_\lambda)  &=\{\alpha \in \sper(A): \exists \lambda \in \Lambda \; s.t. \; a_{\lambda}(\alpha)>0\} , 
\end{align*}
but there may not exist an element $\hat{a}\in A$ such that 
\[ D(\hat{a})=\{\alpha \in \sper(A): \exists \lambda \in \Lambda \; s.t. \; a_{\lambda}(\alpha)>0\}. \]

\begin{ex}
Let $\rho$ be the real closure of the ordered field $\Q$ in $\overline{\Q}$, and let $A=\rho[t]$. Let
\[ \{a_\lambda\}_{\lambda \in \Lambda}=\left\{t-\frac{1}{n} \right\}_{n \in \N}. \]
In $\sper(\rho[t])$, we then compute
\begin{align*}
\bigcup_{n \in \N}D\left(t-\frac{1}{n} \right) &= \{\alpha \in \sper(\rho[t]):\exists n \in \N \; s.t. \; a_{n}(\alpha)>0\}.
\end{align*}
We observe 
\[ D\left(t-\frac{1}{n} \right)=\left\{ \alpha \in \sper(\rho[t]):\overline{t-\frac{1}{n}}\in \overline{\alpha} \subseteq \rho[t]/\supp(\alpha) \right\}, \]
as $\rho[t]/\supp(\alpha)$ embeds in its real closure $\rho(\alpha)$ as an ordered ring. As $1/n$ is positive for any natural $n$, we know
\[ \left\{ \alpha \in \sper(\rho[t]):\overline{t-\frac{1}{n}}\in \overline{\alpha} \subseteq \rho[t]/\supp(\alpha) \right\} \] is contained in \[ \left\{ \alpha \in \sper(\rho[t]):\overline{t-\frac{1}{n}}+\overline{\frac{1}{n}}\in \overline{\alpha} \subseteq \rho[t]/\supp(\alpha) \right\},  \]
or more succinctly
\[ D\left(t-\frac{1}{n} \right) \subseteq D(t) \]
for all $n \in \N$. Hence the union over all natural $n$ is also contained in $D(t)$;
\[ \bigcup_{n \in \N} D\left(t-\frac{1}{n} \right) \subseteq D(t). \]
Let \[ \alpha_{0}=\{f(t) \in \rho[t]: f(0) \geq 0\}. \]
We then have $\alpha_{0} \in D(t)$ by definition, but 
\[ \overline{\left(t-\frac{1}{n}\right)}(\alpha_{0})=\overline{\left(-\frac{1}{n}\right)}(\alpha_{0})<0 \]
in $\rho[t]/\langle t \rangle$ for all natural $n$. Hence 
\[ \bigcup_{n \in \N} D\left(t-\frac{1}{n} \right) \subset D(t). \]
\end{ex}
Observing that $D(1)=\sper(A)$ and $D(-1)=\emptyset$, these subsets of $D(a_1,\hdots,a_n) \subseteq \sper(A)$ form a basis for a topology on $\sper(A)$;
\begin{defn}
(\cite{rcs},4) The set $\sper(A)$, together with the topology generated by subsets of $\sper(A)$ of the form $D(a_1,\hdots,a_n)$, is called the \textit{real spectrum} of $A$, and this topology is called the \textit{weak topology}.
\end{defn}
I will sometimes write 
\[ P(a):=\{\alpha \in \sper(A): a(\alpha) \geq 0\}=D(-a)^{c}. \]
The \textit{constructible} subsets of the real spectrum are those which can be written using using basis open sets and finitely many applications of union, intersection, and complement. For example
\[  P(a) \cap D(a)=D(a) \]
is constructible in $\sper(A)$.
\begin{defn}
The smallest topology on $\sper(A)$ containing the constructible subsets of $\sper(A)$ in the weak topology as \underline{closed} sets is called the \textit{proconstructible topology} on $\sper(A)$.
\end{defn}

If I do not specify a topology, I always mean the weak topology.
If $f:R \to A$ is a ring homomorphism, we have shown previously that for subsets $\alpha \subset A$ which project to positive cones of total orders under $A \to A/\supp(\alpha)$, we have a commutative diagram
\begin{center}
\begin{tikzcd}
R \ar[r,"f"] \ar[d] & A \ar[d] \\ R/\supp(f^{-1}(\alpha)) \ar[r,dotted,"\exists \bar{f}"] & A/\supp(\alpha).
\end{tikzcd}
\end{center}
In the language of topology, this means we have a map of sets
\[ \phi:\sper(A) \to \sper(R), \]
\begin{prop}
If $f:R \to A$ is a ring homomorphism, and $\phi:\sper(A) \to \sper(R)$ is the corresponding map of sets, then $\phi$ is continuous in the weak topology.
\end{prop}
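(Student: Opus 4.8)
The plan is to verify continuity on a basis: the sets $D(r_1,\dots,r_n)$ with $r_i \in R$ generate the weak topology on $\sper(R)$, and taking preimages commutes with finite intersections, so it suffices to show $\phi^{-1}(D(r))$ is open for a single $r \in R$. I expect the sharp statement $\phi^{-1}(D(r)) = D(f(r))$ to hold, which makes continuity immediate since $D(f(r))$ is a basic open set of $\sper(A)$.

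The first step is to translate "evaluation" into "membership". For $\beta \in \sper(R)$ and $r \in R$, the element $r(\beta) = \rho_\beta(r)$ sits inside the ordered field $\rho(\beta)$, and the inclusions $R/\supp(\beta) \hookrightarrow \kappa(\beta) \hookrightarrow \rho(\beta)$ are order-preserving; hence $r(\beta) > 0$ holds in $\rho(\beta)$ if and only if $\pi_\beta(r) > 0$ in $R/\supp(\beta)$. Now $\pi_\beta(r)$ lies in the positive cone $\pi_\beta(\beta)$ exactly when $r \in \beta$ (the nontrivial direction uses $\supp(\beta) \subseteq \beta$, so $r = b + (r-b) \in \beta$ whenever $r - b \in \supp(\beta)$ for some $b \in \beta$), and $\pi_\beta(r) \neq 0$ exactly when $r \notin \supp(\beta) = \beta \cap -\beta$. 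Combining these, $r(\beta) > 0 \iff r \in \beta$ and $r \notin -\beta$, so
\[ D(r) = \{\beta \in \sper(R) : r \in \beta,\ r \notin -\beta\}, \qquad D(f(r)) = \{\alpha \in \sper(A) : f(r) \in \alpha,\ f(r) \notin -\alpha\}. \]

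The second step is a membership chase. Because $f$ is additive one has $-f^{-1}(\alpha) = f^{-1}(-\alpha)$, so for $\alpha \in \sper(A)$ with $\phi(\alpha) = f^{-1}(\alpha)$:
\[ \alpha \in \phi^{-1}(D(r)) \iff r \in f^{-1}(\alpha) \text{ and } r \notin f^{-1}(-\alpha) \iff f(r) \in \alpha \text{ and } f(r) \notin -\alpha \iff \alpha \in D(f(r)). \]
Thus $\phi^{-1}(D(r)) = D(f(r))$, and then $\phi^{-1}(D(r_1,\dots,r_n)) = \bigcap_{i} \phi^{-1}(D(r_i)) = \bigcap_{i} D(f(r_i)) = D(f(r_1),\dots,f(r_n))$ is open, so $\phi$ is continuous in the weak topology.

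The only delicate point — and the step I would write out most carefully — is the translation in the first paragraph: one must unwind the definitions of $a(\alpha)$ and $D(a)$ into the purely set-theoretic condition "$a \in \alpha$ and $a \notin -\alpha$" without sign errors around the support. Once that dictionary is in place, the rest is a routine diagram/membership chase with no real obstacle.
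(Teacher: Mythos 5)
Your proposal is correct and takes essentially the same route as the paper: both establish the sharp identity $\phi^{-1}(D(r)) = D(f(r))$ and conclude continuity by checking on the basic opens $D(r_1,\dots,r_n)$. Your explicit rewriting of $D(r)$ as the membership condition ``$r \in \beta$ and $r \notin -\beta$'' simply spells out the evaluation step that the paper treats more informally via $r(f^{-1}(\alpha)) > 0$.
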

\begin{proof}
 Let $r$ be a nonzero element of $R$, and consider the diagram 
\begin{center}
\begin{tikzcd}
\sper(A) \ar[r,"\phi"] & \sper(R) \\  & D(r), \ar[u,hook,"\circ" description] 
\end{tikzcd}
\end{center}
which we may extend to a diagram
\begin{center}
\begin{tikzcd}
\sper(A) \ar[r,"\phi"] & \sper(R) \\ \phi^{-1}(D(r)) \ar[u,hook]  \ar[r, "\phi|_{\phi^{-1}(D(r))}"]  & D(r). \ar[u,hook,"\circ" description] 
\end{tikzcd}
\end{center}
We have 
\begin{align*}
\phi^{-1}(D(r))=&\{\alpha \in \sper(A):r(\phi(\alpha))>0\} \\=&\{\alpha \in \sper(A):r(f^{-1}(\alpha))>0\},
\end{align*}
and we will proceed to show this set is a basis open set in $\sper(A)$. Now, $r(f^{-1}(\alpha))>0$ holds precisely when $r \in f^{-1}(\alpha)/\supp(f^{-1}(\alpha))$ and $ r \neq 0$. We then know 
\[ f(r) \in \alpha/\supp(\alpha), \]
and so
\begin{align*} 
\{\alpha \in \sper(A):r(f^{-1}(\alpha))>0\}&=\{ \alpha \in \sper(A):f(r)(\alpha)>0 \} \\&= D(f(r)),
\end{align*}
proving not only that $\phi^{-1}(D(r))$ is open, but also that it equals $D(f(r))$!
\end{proof}

\begin{defn}
(\cite{rcs},4) For rings $A$ and $B$, a map $\varphi:\sper(A) \to \sper(B)$ is a \textit{morphism of spectral spaces} if it is continuous in the weak and proconstructible topologies.
\end{defn}
As you might expect, if $f:B \to A$ is a ring homomorphism, the corresponding map $\varphi:\sper(A) \to \sper(B)$ is a morphism of spectral spaces.

Now, we might wonder about fibers over points. For starters, if $\rho_{\alpha}:A \to \rho(\alpha)$ is the evaluation morphism defined previously, \[\rho_{\alpha}^{-1}(\{x \in \rho(\alpha):x \geq 0\})=\alpha, \] so this yields a bijective correspondence between points $\alpha \in \sper(A)$ and spaces $\sper(\rho(\alpha))$. Moving forward, we identify $\alpha$ (with the subspace topology) and $\sper(\rho(\alpha))$.

It turns out the easiest way to see that \textit{all} fiber products of affine real spectra exist is by studying the real closure operation on a ring $A$ as a left-adjoint to a certain forgetful functor.


\subsection{A Model-Theoretic Moment}

We have a model-theoretic way of dealing with proconstructible subsets of $\sper A$ which is a bit nicer than working with the definition. Let $\mathscr{L}_{\leq}$ be the language of ordered fields, and let Th be the theory of real closed fields in $\mathscr{L}_{\leq}$. We may extend $\mathscr{L}_{\leq}$ by adding the constants  
\[ \{c_a:a \in A \} \]
to it to obtain $\mathscr{L}_{\leq}(A)$. We may then extend the functions '$+$' and '$*$' in $\mathscr{L}_{\leq}(A)$ so that for all $a,b \in A$,
\[  c_0=0, \; c_1=1, \; c_a+c_b=c_{a+b}, \text{ and } c_a*c_b=c_{ab}.  \]
A model of $\text{Th}(A)$ in $\mathscr{L}_{\leq}(A)$ is a real closed field $F$, such that for each $a,b \in A$, there exist constants $c_a , c_b \in F$ such that 
\[  c_0=0, \; c_1=1, \; c_a+c_b=c_{a+b}, \text{ and } c_a*c_b=c_{ab}.  \] 
Consequently, we may also think of a model of $\text{Th}(A)$ in $\mathscr{L}_{\leq}(A)$ as a homomorphism $\rho:A \to F$ to a real closed field $F$.

If $F_{+}$ denotes the positive cone of total order for $F$, by Theorem 5, $\rho^{-1}(F_+)$ is the positive cone of a partial order on $A$, and it descends to a total order on $A/\ker(\rho)$. Hence $\rho^{-1}(F_+)$ is a point in $\sper A$, and conversely, each point $\alpha \in \sper A$ gives a homomorphism to a real closed field $\rho_{\alpha}:A \to \rho(\alpha)$. So the points of $\sper A$ are in bijection with the collection of homomorphisms $\rho:A \to F $ modulo the relation
\[ (\rho_1:A \to E) \sim (\rho_2:A \to F) \iff \rho_{1}^{-1}(E_+)=\rho_{2}^{-1}(F_+). \]

We can also describe constructible subsets of $\sper A$ using model theory. Fix $a \in A$, and consider the open set 
\[ D(a) \subset \sper A. \]
We know 
\[  D(a)=\{ \alpha \in \sper A: a(\alpha)>0 \},  \]
and $a(\alpha)=\rho_{\alpha}(a)$, where $\rho_{\alpha}:A \to \rho(\alpha)$ is the evaluation map. For each $\alpha \in D(a)$, consider the equivalence class of homomorphisms $(\rho_{\alpha}:A \to \rho(\alpha))$ that satisfy the sentence
\[ \varphi: \; c_a >0  \]
parameterized by $a$ in $\mathscr{L}_{\leq}(A)$. Under the previous correspondence,
\[ D(a)=\{(\rho:A \to F): \rho \models \varphi \}, \]
and more generally, since constructible subsets are finite boolean combinations of sets of the above form, they can also be described by sentences $\varphi(a_1,\hdots,a_m)$ with finitely many parameters $a_1,\hdots,a_m$ from $A$. Intersections of constructible sets correspond to combining sentences with $\wedge$, so infinite intersections \footnote{These are the interesting pro-constructible subsets!} correspond to 'infinite-length' sentences in $\mathscr{L}_{\leq}(A)$.

If $B \to A$ is a ring homomorphism, (pro)constructible subsets $L \subseteq \sper B$ have (pro)constructible inverse images $K \subseteq \sper A$ under the induced morphism of spectral spaces. However, (pro)constructible subsets of $\sper A$ are not necessarily sent to (pro)constructible subsets of $\sper B$ under the induced map of spectral spaces. In some fairly general cases, however, we get lucky and such subsets are preserved as described below.

Let $f:R \to A$ be a  homomorphism with induced map $\pi:\sper A \to \sper R$ such that $A$ is a finitely presented algebra over $R$;
\[ R \to A \cong R[x_1,\hdots,x_n]/\langle Q_1,\hdots,Q_k \rangle. \]
Let $\varphi(a_1,\hdots,a_m)$ be a sentence in $\mathscr{L}_{\leq}(A)$ with parameters $a_i$ describing a constructible subset $K$ of $\sper A$. Each polynomial $a_i$ has coefficients $a_{i,1},\hdots,a_{i,\deg(a_i)}$ in $R$, and similarly for the $Q_i$. Let $\Phi$ be the sentence over $\mathscr{L}_{\leq}(R)$, with the coefficients of the $a_i$ and $Q_j$ as parameters, and in the variables $X_{i}$ for $i \in \{1, \hdots,n\}$ which says
\[ \exists X_1,\hdots, X_n : \varphi(a_1(X_1,\hdots,X_n),\hdots,a_m(X_1,\hdots,X_n)) \wedge \left(\bigwedge_{j=1}^{m}Q_{j}(X_1,\hdots,X_n)=0 \right). \]
I claim this sentence exactly describes $\pi(K)$. If $(R\to \rho(\beta))$ is in $\pi(K)$, there exists a point $(A \to \rho(\alpha))$ in $K$ such that 
\begin{center}
\begin{tikzcd}
\sper A \ar[r,"\pi"] & \sper R \\ \sper \rho(\alpha) \ar[u,hook,"\iota_{\alpha}"] \ar[r] & \sper \rho(\beta) \ar[u,hook,"\iota_{\beta}" swap]
\end{tikzcd}
\end{center}
commutes. Consequently, we have a commutative diagram of ring homomorphisms
\begin{center}
\begin{tikzcd}
R \ar[r,"f"] \ar[d,"\rho_{\beta}" swap] & A \ar[d,"\rho_{\alpha}"] \\ \rho(\beta) \ar[r] & \rho(\alpha).
\end{tikzcd}
\end{center}
Since $R\to \rho(\alpha)$ factors through $A \to \rho(\alpha)$ and $(A \to \rho(\alpha))$ satisfies $\varphi$, $(R \to \rho(\alpha))$ satisfies $\Phi$, proving that the elements of $\pi(K)$ satisfy $\Phi$. Suppose $(R\to \rho(\beta))$ satisfies $\Phi$. Using elementary ring theory, we can construct a homomorphism $\lambda:A \to \rho(\beta)$ such that 
\begin{center}
\begin{tikzcd}
R \ar[r,"f"] \ar[d,"\rho_{\alpha}" swap] & A \ar[ld,dotted,"\lambda" ] \\ \rho(\beta) &
\end{tikzcd}
\end{center}
commutes, where $\lambda(x_i)=X_i$ in $\rho(\beta)$. Hence
\begin{center}
\begin{tikzcd}
\sper A \ar[r,"\pi"]  & \sper R  \\ \sper \rho(\beta) \ar[u,"\iota_{\alpha}" swap] \ar[ru,dotted,"l" ] &
\end{tikzcd}
\end{center}
commutes, and $(R \to \rho(\beta))$ is in $\pi(K)$. We have now proved
\begin{thm}
(\cite{coste_roy_Topologie_1982},34) Let $f:R \to A$ be a ring homomorphism giving a finite presentation of $A$ over $R$. Let $\pi:\sper A \to \sper R$ be the induced morphism of spectral spaces, and let $K$ be a (pro)constructible subset of $\sper A$. Then $\pi(K)$ is (pro)constructible.
\end{thm}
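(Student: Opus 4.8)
The plan is to handle the two halves of the statement separately, since the conclusions are genuinely different in strength: a constructible set is automatically proconstructible (constructible sets are closed in the proconstructible topology by definition), so the proconstructible assertion does not imply the constructible one. For the constructible case the essential work has already been carried out in the discussion preceding the theorem: given a quantifier-free $\mathscr{L}_{\leq}(A)$-sentence $\varphi(a_1,\dots,a_m)$ cutting out $K$, I have produced the $\mathscr{L}_{\leq}(R)$-sentence $\Phi$ --- obtained by existentially quantifying over the generators $x_1,\dots,x_n$ of a presentation $A \cong R[x_1,\dots,x_n]/\langle Q_1,\dots,Q_k\rangle$ and conjoining the relations $Q_j=0$ --- and I have checked that $\Phi$ cuts out precisely $\pi(K)$. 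The one missing ingredient is to strip the existential quantifier block from $\Phi$: by the Tarski--Seidenberg theorem, $\mathrm{Th}$ admits quantifier elimination in $\mathscr{L}_{\leq}$, so $\Phi$ is $\mathrm{Th}$-equivalent to a quantifier-free $\mathscr{L}_{\leq}(R)$-sentence $\Psi$ whose parameters are still the (finitely many) coefficients of the $a_i$ and $Q_j$, all in $R$. I would then observe that a quantifier-free $\mathscr{L}_{\leq}(R)$-sentence is a finite boolean combination of atomic conditions of the shape $p \geq 0$ and $p=0$ for polynomials $p$ over $R$, each cutting out a constructible subset of $\sper R$ (a suitable $P(p)$, respectively $P(p)\cap P(-p)$); since constructible subsets are stable under the boolean operations, $\pi(K)=\{\beta\in\sper R : \beta \models \Psi\}$ is constructible.

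For the proconstructible case I would drop the formulas entirely and argue topologically; this half, in fact, uses neither the finite presentation nor Tarski--Seidenberg. The input needed is that $\sper A$ and $\sper R$, each with its proconstructible topology, are quasi-compact Hausdorff spaces --- quasi-compactness being essentially the model-theoretic compactness theorem (or the standard fact that the real spectrum is a spectral space, \cite{coste_roy_Topologie_1982}), and Hausdorffness holding because any two distinct points $\alpha\neq\beta$ are separated by the clopen constructible sets $P(a)$ and $D(-a)$ for any $a\in\alpha\setminus\beta$ --- together with the fact that $\pi$, being a morphism of spectral spaces, is continuous for the proconstructible topologies. Granting this, the argument is immediate: a proconstructible subset $K\subseteq\sper A$ is by definition closed in the proconstructible topology, hence quasi-compact, so $\pi(K)$ is a quasi-compact, hence closed, hence proconstructible subset of $\sper R$.

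The main obstacle is the constructible half, and the real mathematical content there is the quantifier elimination for real closed fields; the rest is a careful dictionary between constructible sets and quantifier-free formulas. Two points I would be careful about. First, when forming and verifying $\Phi$ I must make sure the existential quantifiers range over the real closures $\rho(\beta)$ in which evaluation actually lives, and that a common zero $(X_1,\dots,X_n)$ of the $Q_j$ in such a field really does determine a ring homomorphism $\lambda:A\to\rho(\beta)$ with $\lambda(x_j)=X_j$ --- this is the ``elementary ring theory'' step, and it works exactly because $A$ is presented by the relations $Q_j$ and nothing more. Second, I would either cite or briefly prove the quasi-compactness of the real spectrum in its proconstructible topology, since the soft argument for the proconstructible case rests on it entirely; the cleanest route is the compactness theorem applied to the partial types over $\mathrm{Th}$ generated by the sentences $\{c_a>0\}$ and their negations.
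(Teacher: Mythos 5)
Your proposal is correct, and it splits into two halves that compare differently with the paper. For the constructible half you follow the same route as the paper: the existential $\mathscr{L}_{\leq}(R)$-sentence $\Phi$ (quantify over the presentation variables, conjoin $\varphi$ with the relations $Q_j=0$, check both inclusions using the factorization $R\to A\to\rho(\alpha)$ and, conversely, the homomorphism $\lambda:A\to\rho(\beta)$ determined by a solution of the $Q_j$). The one place you add value is that you make explicit the step the paper leaves implicit: passing from ``$\pi(K)$ is defined by a first-order sentence with finitely many parameters from $R$'' to ``$\pi(K)$ is constructible'' requires Tarski--Seidenberg quantifier elimination, since the paper's definition of constructible only allows quantifier-free boolean combinations of the basic sets $D(a_1,\dots,a_n)$; spelling out the reduction of $\Phi$ to a quantifier-free $\Psi$ and then to boolean combinations of sets $P(r)$ closes that gap cleanly. (The same transfer principle also silently underlies the well-definedness of satisfaction of $\Phi$ on the equivalence class of homomorphisms representing $\beta$, which your first cautionary point touches on.) For the proconstructible half you genuinely diverge: the paper's displayed argument only treats a constructible $K$ described by a single sentence, gesturing earlier at ``infinite-length sentences'' for intersections, whereas you argue topologically that proconstructible means closed in the proconstructible (patch) topology, which is quasi-compact and Hausdorff, that $\pi$ is patch-continuous because it is a morphism of spectral spaces, and hence that $\pi(K)$ is compact, closed, and proconstructible. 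This is correct, it is the standard spectral-space argument, and it buys generality: it uses neither the finite presentation nor quantifier elimination, and in fact completes a case the paper's proof does not explicitly carry out; its only external input is quasi-compactness of the patch topology, which you rightly flag as something to cite or prove (e.g.\ via the compactness theorem), and your Hausdorff separation by the complementary constructible sets $P(a)$ and $D(-a)$ is exactly right.
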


\subsection{A Sheaf on Sper(-)}

We are ready to define the structure sheaf of the real spectrum! 
Let $A$ be a ring with real spectrum $X:=\sper(A)$ in its weak topology. The structure homomorphism $p:A \to A[T]$ yields a morphism of spectral spaces $\pi:\sper A[T] \to \sper A$. The set of morphisms of spectral spaces \[ \{\sigma:\sper A \to \sper A[T]: \pi \circ \sigma=\text{id}_{\sper A}\} \]
corresponds bijectively to the evaluation maps 
\[  e_{a}:A[T] \to A \; \text{ defined by } T \mapsto a.  \]
These morphisms $e_{a}$ have a natural ring structure; for $a,b \in A$, we have
\begin{align*}
e_a +e_b =& e_{a+b} \quad \text{, and } \\  e_a e_b =& e_{ab}.
\end{align*}
Hence sections of $\pi:\sper A[T] \to \sper A$ have a ring structure which contains a ring isomorphic to $A$.

Let $K \subseteq \sper A$ be a proconstructible subset of $\sper A$. $\pi^{-1}(K)$ is the inverse image of an intersection of constructible sets $\bigcap_{\gamma \in \Gamma}C_{\gamma}$, and since constructible sets are preserved under inverse image,
\[ \pi^{-1}(K)=\pi^{-1}\left( \bigcap_{\gamma \in \Gamma}C_{\gamma} \right)=\bigcap_{\gamma \in \Gamma}\pi^{-1}(C_{\gamma})  \]
is proconstructible. Consider the 'local sections' \[ \left\{\sigma:K \to \pi^{-1}(K) \biggr\lvert \sigma(K) \text{ is closed constructible, and } (\pi\lvert_{\pi^{-1}(K)}) \circ \sigma=\text{id}_{K}\right\}. \]
In the paragraphs to come, we will see these sections carry a ring structure just like the global sections \cite{rcs}! 

\begin{defn}
Let $X=\sper A$. Define
\[ \mathscr{O}_{X}(X):=\{\sigma:X \to \sper A[T]: \pi \circ \sigma=\text{id}_{X}\} \]
as the \textit{global sections of the structure sheaf of} $\sper A$, and define 
\[ \mathscr{O}_{X}(K):= \left\{\sigma:K \to \pi^{-1}(K) \biggr\lvert \sigma(K) \text{ is closed constructible, and } \pi\lvert_{\pi^{-1}(K)} \circ \sigma=\text{id}_{K}\right\}  \]
as the \textit{local sections of the structure sheaf } $\mathscr{O}_X$.
\end{defn}

If $\alpha$ is a point in  $K$, a proconstructible subset of $\sper A$, by  Proposition 4.3 of \cite{coste_roy_Topologie_1982},  we have a diagram 
\begin{center}
\begin{tikzcd}
& \sper A[t] \ar[r,"\pi"]  & \sper A \\ \pi^{-1}(\alpha) \ar[r,"\cong" ] & \sper \rho(\alpha)[t] \ar[r,"\pi|"] \ar[u,hook] & \sper \rho(\alpha). \ar[u,hook]
\end{tikzcd}
\end{center}
If $\sigma \in \mathscr{O}_{\sper A}(K)$, then
\[ \{\sigma(\alpha)\}_{\alpha \in K}=\sigma(K) \cap \pi^{-1}(\alpha) \]
is closed constructible in $\sper \rho(\alpha)[t]$. If $r \in \rho(\alpha)$, we have a corresponding subspace 
\[  \{ \gamma \in \sper \rho(\alpha)[t]: t(\gamma)=r  \}, \]
and since we can recover $r$ by evaluating $t$ at any point in the set, the correspondence is bijective. Under this correspondence, $\sigma(\alpha) \in \rho(\alpha)$, and
\[  \sigma(K) \subseteq \prod_{\alpha \in K}\rho(\alpha).  \]
Moving forward, we will also denote the map
\[ K \to \prod_{\alpha \in K}\rho(\alpha)  \]
by $\sigma$. Letting $X=\sper A$, we obtain a map 
\[ i:\mathscr{O}_X(K) \to \prod_{\alpha \in K}\rho(\alpha) \]
which takes $\sigma$ and maps it to \[(\sigma(\alpha))_{\alpha \in K}. \]
Since different sections $\sigma_1,\sigma_2:K \to \pi^{-1}(K)$ must take different values at some point $\gamma \in K$, $i$ is an injective \textbf{set map} of rings. We aim to show $\mathscr{O}_X(K)$ is a subring of $\prod_{\alpha \in K}\rho(\alpha)$ \cite{rcs}.

Before proceeding, we need to know that if $\alpha,\beta \in K$ are points such that $\alpha \subseteq \beta$, we have $\sigma(\alpha)=\sigma(\beta)$ (in some sensible field). In the paragraphs below, we develop the algebraic machinery to guarantee exactly this.

\begin{defn}
(\cite{rcs},6) Fix a ring $A$, and let $\alpha,\beta \in \sper(A)$. We say $\alpha$ \textit{specializes to} $\beta$, or $\beta$ \textit{is a specialization of} $\alpha$ if $\beta$ is in the closure of $\alpha$ in the weak topology, or $\alpha \subseteq \beta$. We similarly say $\alpha$ is a \textit{generization} of $\beta$.
\end{defn}

\begin{lemma}
(\cite{rcs},6) Let $\alpha, \beta \in \sper A$, with $\beta$ a specialization of $\alpha$. Then $\,\textbf{} \supp(\beta)/\supp(\alpha)$ is a convex ideal in $A/\supp(\alpha)$.
\end{lemma}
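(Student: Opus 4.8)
The plan is to unwind the definition of $\bar\alpha$-convexity directly, where $\bar\alpha = \alpha/\supp(\alpha) = \pi_\alpha(\alpha)$ is the positive cone giving the total order on $A/\supp(\alpha)$. First I would record the two facts that make the statement well-posed. Since $\beta$ is a specialization of $\alpha$ we have $\alpha \subseteq \beta$, hence also $-\alpha \subseteq -\beta$, and intersecting gives $\supp(\alpha) = \alpha \cap -\alpha \subseteq \beta \cap -\beta = \supp(\beta)$. Therefore $\supp(\beta)/\supp(\alpha)$ is a genuine ideal of $A/\supp(\alpha)$, and moreover $\supp(\alpha) \subseteq \supp(\beta) \subseteq \beta$, so in particular $\supp(\alpha) \subseteq \beta$; this last containment is what will let me ignore the choice of coset representatives.

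Next I would verify the convexity implication. Fix $\bar a, \bar b \in A/\supp(\alpha)$ with $\bar a \in \bar\alpha$, with $\bar b - \bar a \in \bar\alpha$, and with $\bar b \in \supp(\beta)/\supp(\alpha)$; the goal is $\bar a \in \supp(\beta)/\supp(\alpha)$. Choose a representative $a \in A$ of $\bar a$ lying in $\alpha$ (possible since $\bar a \in \bar\alpha = \pi_\alpha(\alpha)$), and an arbitrary representative $b \in A$ of $\bar b$; because $\supp(\alpha) \subseteq \supp(\beta)$, automatically $b \in \supp(\beta)$ regardless of the choice. From $\bar b - \bar a \in \bar\alpha$ pick $c \in \alpha$ with $\pi_\alpha(c) = \bar b - \bar a$, so that $s := (b - a) - c$ lies in $\ker \pi_\alpha = \supp(\alpha)$, i.e. $b - a = c + s$.

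Now I would push everything into $\beta$ and use that $\beta$ is closed under addition and that $\supp(\beta) = \beta \cap -\beta$. We have $a \in \alpha \subseteq \beta$, and $c \in \alpha \subseteq \beta$, and $s \in \supp(\alpha) \subseteq \beta$, so $b - a = c + s \in \beta$. Since $b \in \supp(\beta)$ we also have $-b \in \beta$, whence $-a = (b - a) + (-b) \in \beta$. Combining, $a \in \beta \cap -\beta = \supp(\beta)$, so $\bar a \in \supp(\beta)/\supp(\alpha)$, which is exactly the required $\bar\alpha$-convexity.

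I do not expect a genuine obstacle here; the proof is essentially a bookkeeping argument. The only two points that need care are (i) checking that $\supp(\beta)/\supp(\alpha)$ is well-defined as an ideal of $A/\supp(\alpha)$, which is handled by the containment $\supp(\alpha) \subseteq \supp(\beta)$ extracted from $\alpha \subseteq \beta$, and (ii) correctly translating the hypotheses $\bar a \in \bar\alpha$ and $\bar b - \bar a \in \bar\alpha$ into membership statements in $\beta$, which works precisely because the ambiguity in representatives lives in $\supp(\alpha) \subseteq \supp(\beta) \subseteq \beta$. Notably, neither primeness of the supports nor totality of the order on $A/\supp(\alpha)$ is needed for the convexity implication itself, although both of course hold since $\alpha, \beta \in \sper A$.
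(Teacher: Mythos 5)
Your argument is correct and is, at bottom, the same argument as the paper's: both proofs show that the relevant element lands in $\beta$ together with its negative, so that it falls into $\supp(\beta)=\beta\cap-\beta$. The difference is where the bookkeeping happens. The paper passes immediately to the projection $p:A/\supp(\alpha)\to A/\supp(\beta)$ and observes that $p(a)$ and $p(b-a)=-p(a)$ both lie in the cone $\beta/\supp(\beta)$, whence $p(a)=0$ by the cone axiom; this only ever invokes properties of the quotient cone $\beta/\supp(\beta)$, which is a positive cone \emph{by the definition} of a point of $\sper A$. You instead work upstairs in $A$ with chosen representatives, and in doing so you use twice that $\beta$ itself is closed under addition (to get $b-a=c+s\in\beta$ and $-a=(b-a)+(-b)\in\beta$). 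That closure is part of the standard definition of a prime cone and is consistent with how the paper actually treats points (e.g.\ the identification $\beta=\rho_\beta^{-1}(\{x\ge 0\})$ in Section 1.4 forces $\beta$ to be a saturated, additively closed cone), but it is not literally one of the two conditions in the paper's Definition of $\sper A$, so if one is being pedantic about that definition your proof needs the extra (easy, standard) remark that points of $\sper A$ are additively closed, whereas the paper's quotient-level argument sidesteps the issue entirely. Your observation that neither primeness of the supports nor totality of the order is needed is accurate and matches the paper, whose case $a\neq 0$ really only uses the positive-cone axiom that $x$ and $-x$ cannot both lie in the cone unless $x=0$.
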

\begin{proof}
Let $a$ and $b$ be elements of $A/\supp(\alpha)$, and suppose $a$ and $b-a$ are in $\alpha/\supp(\alpha)$, and $b \in I=\supp(\beta)/\supp(\alpha)$. We want to show $a \in I$. We have a projection
\[ p:A/\supp(\alpha) \to A/\supp(\beta), \]
and we use this projection to analyze the cases $a \neq 0$ and $a=0$ separately. If $a=0$, $a \in I$ because $I$ is an ideal. If $a \neq 0$,
\[ p(\{a,b-a\})=\{p(a),p(b-a)\}=\{p(a),-p(a)\}\subset \beta/\supp(\beta),  \]
and since $\beta/\supp(\beta)$ totally orders $A/\supp(\beta)$, $p(a)=0$. Hence $a \in I$.
\end{proof}

Now, given $\alpha$ and $\beta$ as in the above lemma, we have a diagram
\begin{center}
\begin{tikzcd}
 A \ar[r] \ar[rd] & A/\supp(\alpha) \ar[r,"\rho_\alpha"] \ar[d] & \rho(\alpha) \\ & A/\supp(\beta) \ar[r,"\rho_\beta"] & \rho(\beta), 
\end{tikzcd}
\end{center}
which commutes since $\alpha \subseteq \beta$. By Zorn, there is a largest convex subring $C_{\beta \alpha} \subseteq \rho(\alpha)$ with maximal ideal $m_{\beta \alpha}$ such that 
\begin{center}
\begin{tikzcd}
 & C_{\beta \alpha} \ar[d,hook] \\ A/\supp(\alpha) \ar[d] \ar[r,"\rho_\alpha"] \ar[ru,hook,"f"] & \rho(\alpha)  \\  A/\supp(\beta) \ar[r,"\rho_\beta"] & \rho(\beta)
\end{tikzcd}
\end{center}
commutes, and \[ f^{-1}(m_{\beta \alpha})=\supp(\beta)/\supp(\alpha). \]

I claim $\rho_{\beta \alpha}=C_{\beta \alpha}/m_{\beta \alpha}$ is a real closed field, and by \cite{prestel}, it suffices to show $\rho_{\beta \alpha}$ is uniquely and maximally ordered, in such a  way that with this ordering, $\rho_{\beta \alpha}$ is an ordered field. Since $C_{\beta \alpha}$ is convex in $\rho(\alpha)$, the positive cone $C_+ \subset C_{\beta \alpha}$ is simply
\[ C_+=C_{\beta \alpha} \cap \{y \in \rho(\alpha): \exists x \in \rho(\alpha) \text{ such that } x^{2}=y\}. \]
In the quotient $C_{\beta \alpha} \to \rho_{\beta \alpha}$, $C_+/m_{\beta \alpha}$ remains closed under addition and multiplication, so it is left to check that at most one of $x$ and $-x$ are in $C_+/m_{\beta \alpha}$ for all $x \in $. Without loss of generality, assume $-x \notin C_+/m_{\beta \alpha}$. We then have 
\begin{align*}
&-x \notin C_+/m_{\beta \alpha} \\ \implies & -x+a \notin C_+ \text{ for all } a \in m_{\beta \alpha} \\ \implies & -x \notin C_+ \\ \implies & x \in C_+ \\ \implies & x \in C_+/m_{\beta \alpha},
\end{align*}
which implies $C_+/m_{\beta \alpha}$ is the positive cone of a total order on $\rho_{\beta \alpha}$. Since $C_{\beta \alpha} \subseteq \rho(\alpha)$, and $\rho(\alpha)$ is uniquely and maximally ordered, $C_{\beta \alpha}$ is uniquely and maximally ordered. Hence the quotient $\rho_{\beta \alpha}$ is maximally ordered with positive cone equal to its squares, so it is real closed by \cite{prestel}.

We may now extend our diagram to
\begin{center}
\begin{tikzcd}
 & C_{\beta \alpha} \ar[d,hook] \ar[rd,"\pi_{\beta \alpha}"] & \\ A/\supp(\alpha)  \ar[d] \ar[r,"\rho_\alpha"] \ar[ru,hook,"f"]  & \rho(\alpha) & \rho_{\beta \alpha} \\  A/\supp(\beta) \ar[r,"\rho_\beta"] & \rho(\beta), &
\end{tikzcd}
\end{center}
and since $\supp(\beta)$ is in the kernel of \[ A/\supp(\alpha) \to C_{\beta\alpha} \to \rho_{\beta \alpha}, \] passing to the quotient by $\supp(\beta)$ yields a commutative diagram
\begin{center}
\begin{tikzcd}
 & C_{\beta \alpha} \ar[d,hook] \ar[rd,"\pi_{\beta \alpha}"] & \\  A/\supp(\alpha) \ar[d] \ar[r,"\rho_\alpha"] \ar[ru,hook,"f"]  & \rho(\alpha)  & \rho_{\beta \alpha} \\  A/\supp(\beta) \ar[r,"\rho_\beta"] \ar[rru,crossing over,dotted] & \rho(\beta). &
\end{tikzcd}
\end{center}
By the universal properties of the fraction field and of real closed fields, $A/\supp(\beta) \to \rho_{\beta \alpha}$ factors through $\rho(\beta)$ to yield the final commutative diagram
\begin{center}
\begin{tikzcd}
 & C_{\beta \alpha} \ar[d,hook] \ar[rd,"\pi_{\beta \alpha}"] & \\  A/\supp(\alpha) \ar[d] \ar[r,"\rho_\alpha"] \ar[ru,hook,"f"]  & \rho(\alpha)  & \rho_{\beta \alpha} \\  A/\supp(\beta) \ar[r,"\rho_\beta"]  & \rho(\beta) \ar[ru,dotted,"\iota" swap]. &
\end{tikzcd}
\end{center}

\begin{remark}
Suppose $A=\Q[x,y]$, and define $\alpha \in \sper A$ as
\[  \alpha=\left\{f \in A: \exists \text{ rational } \epsilon >0 \text{ such that } f\biggr\lvert_{[0,\epsilon] \times [0,\epsilon]} \geq 0 \right\},  \]
and $\beta \in \sper A$ as
\[ \beta=\{ f \in A: f(0,0) \geq 0 \}.  \]
We have by definition that $\beta$ is a specialization of $\alpha$, i.e. that $\alpha \subset \beta$. Using our model theoretic definition of $\sper A$, $\alpha$ corresponds to the homomorphism $\Q[x,y] \to \R_{\text{alg}}(\epsilon)$ with $x,y \mapsto \epsilon$, and $\R_{\text{alg}}(\epsilon)$ is the smallest real closed field containing $\R_{\text{alg}}$ and some $\epsilon$ greater than zero and smaller than every other positive real algebraic number. The point $\beta \in \sper A$ corresponds to the homomorphism $\Q[x,y] \to \R_{\text{alg}}$ given by evaluation at $(0,0)$. Note that \textbf{there are no field homomorphisms } $\rho(\alpha) \to \rho(\beta)$! Even though there is a real closed field homomorphism \[\rho(\beta)=\R_{\text{alg}} \hookrightarrow \R_{\text{alg}}(\epsilon)=\rho(\alpha), \]
this homomorphism does not come from the geometry or topology of $\sper A$. 
To see where this homomorphism is coming from, note that we can factor the homomorphisms
\begin{center}
\begin{tikzcd}
\Q[x,y] \ar[r] \ar[rd] & \R_{\text{alg}} \\  &  \R_{\text{alg}}(\epsilon)    
\end{tikzcd}
\end{center}
as 
\begin{center}
\begin{tikzcd}
\Q[x,y] \ar[r,hook] & \R_{\text{alg}}[x,y] \ar[r] \ar[rd] & \R_{\text{alg}} \\ & &  \R_{\text{alg}}(\epsilon).    
\end{tikzcd}
\end{center}
Since the surjective homomorphism $\R_{\text{alg}}[x,y] \to \R_{\text{alg}}$ has a splitting, we get the homomorphism 
\begin{center}
\begin{tikzcd}
\R_{\text{alg}}[x,y] \ar[rd] & \R_{\text{alg}} \ar[l,dotted] \ar[d,dotted] \\  &  \R_{\text{alg}}(\epsilon).    
\end{tikzcd}
\end{center}
When working with rings of finite type over ordered fields, there are many other non-geometric coincidences that occur, such as consequences of the Tarski-Seidenberg principle, or getting a morphism from the universal property of real closed fields.  
\end{remark}

If points can live within other points in the real spectrum, the analogue of a continuous function should be reasonable with respect to the lattice structure of these points;
\begin{defn}
(\cite{rcs},7) Let $K$ be a (pro)constructible subset of $\sper A$. An element $\sigma$ of the ring $\prod_{\alpha \in K}\rho(\alpha)$ is a \textit{compatible section} if for all $\alpha$ and $\beta$, with $\beta$ a specialization of $\alpha$, we have
\begin{enumerate}
\item $\sigma(\alpha) \in  C_{\beta \alpha}$, and
\item $\pi_{\beta \alpha}(\sigma(\alpha))=\iota(\sigma(\beta))$ in $\rho_{\beta \alpha}$.
\end{enumerate}
\end{defn}

Let $\sigma_{id}:K \to \prod_{\alpha \in K}\rho(\alpha)$ send $\alpha \in K$ to $1 \in \rho(\alpha)$. Then, for each $\alpha,\beta \in K$ with $\beta \subseteq \overline{\alpha}$, we know 
\[ \sigma_{id}(\alpha)=1 \in C_{\beta \alpha}. \]
We also have 
\[ \pi_{\beta \alpha}(1)=\iota(1)=1, \]
so compatible sections exist, and $\sigma_{id}$ is a compatible section. 

In fact, we can show each element of $A$ gives a compatible section! Since 
\begin{center}
\begin{tikzcd}
& & C_{\beta \alpha} \ar[d,hook] \ar[rd,"\pi_{\beta \alpha}"] & \\ A \ar[r] \ar[rd] & A/\supp(\alpha)  \ar[r,"\rho_\alpha"] \ar[d] \ar[ru,hook,"f"]  & \rho(\alpha)  & \rho_{\beta \alpha} \\ & A/\supp(\beta) \ar[r,"\rho_\beta"]  & \rho(\beta) \ar[ru,dotted,"\iota" swap] &
\end{tikzcd}
\end{center}
commutes from $A$, the diagram commutes for every $\alpha,\beta \in K$ with $\beta \in \overline{\alpha}$, and elements of $\prod_{\alpha \in K}\rho(\alpha)$ which correspond to elements of $A$ via 
\[  A \to \prod_{\alpha \in K}\rho(\alpha) \]
are all compatible sections. 

Let $\sigma_1$ and $\sigma_2$ are compatible sections on a constructible subspace $K \subseteq \sper A$, and consider their sum $\sigma_1 + \sigma_2$. For each $\alpha,\beta \in K$ with $\beta \in \bar{\alpha}$, we have 
\[ (\sigma_1 + \sigma_2)(\alpha)=\sigma_1(\alpha) + \sigma_2(\alpha) \in C_{\beta \alpha}, \]
and
\begin{align*}
\pi_{\beta \alpha}((\sigma_1+\sigma_2)(\alpha))=&\pi_{\beta \alpha}(\sigma_1(\alpha))+\pi_{\beta \alpha}(\sigma_2(\alpha))\\=&\iota(\sigma_1(\beta))+\iota(\sigma_2(\beta))\\=&\iota((\sigma_1+\sigma_2)(\beta))
\end{align*}
Hence the sum of compatible sections is compatible, and continuing in a similar manner, one can prove
\begin{cor}
(\cite{rcs},7) The set of compatible sections $\sigma \in \prod_{\alpha \in K}\rho(\alpha)$ is a ring.
\end{cor}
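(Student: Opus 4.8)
The plan is to invoke the standard subring criterion: a subset of a ring that contains $1$ and is closed under subtraction and multiplication is a subring. The paragraphs preceding the corollary already establish that the set of compatible sections on $K$ is nonempty, contains the section $\sigma_{id}$ corresponding to $1 \in A$, and is closed under addition. So the only remaining work is to verify closure under additive inverses and under multiplication (the zero section being compatible follows either from closure under subtraction or directly from the fact that $0 \in A$ induces a compatible section).

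First I would record the two structural facts that make the argument purely mechanical: for every pair $\alpha,\beta \in K$ with $\beta$ a specialization of $\alpha$, the set $C_{\beta \alpha} \subseteq \rho(\alpha)$ is by construction a \emph{subring} of $\rho(\alpha)$ (it was defined as the largest convex subring with the stated factorization property), and both $\pi_{\beta \alpha}: C_{\beta \alpha} \to \rho_{\beta \alpha}$ and $\iota: \rho(\beta) \to \rho_{\beta \alpha}$ are ring homomorphisms. Everything else is a transfer of the ring axioms through these maps, exactly mirroring the computation already done for $\sigma_1 + \sigma_2$.

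Then, given compatible sections $\sigma_1,\sigma_2$ on $K$ and a specialization pair $\beta \in \overline{\alpha}$: for the product, $(\sigma_1\sigma_2)(\alpha) = \sigma_1(\alpha)\sigma_2(\alpha) \in C_{\beta \alpha}$ since $C_{\beta \alpha}$ is closed under multiplication, and
\[ \pi_{\beta \alpha}\bigl((\sigma_1\sigma_2)(\alpha)\bigr) = \pi_{\beta \alpha}(\sigma_1(\alpha))\,\pi_{\beta \alpha}(\sigma_2(\alpha)) = \iota(\sigma_1(\beta))\,\iota(\sigma_2(\beta)) = \iota\bigl((\sigma_1\sigma_2)(\beta)\bigr), \]
using that $\pi_{\beta \alpha}$ and $\iota$ are multiplicative; this checks conditions (1) and (2) in the definition of a compatible section. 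For negatives, $(-\sigma_1)(\alpha) = -\sigma_1(\alpha) \in C_{\beta \alpha}$ and $\pi_{\beta \alpha}(-\sigma_1(\alpha)) = -\pi_{\beta \alpha}(\sigma_1(\alpha)) = -\iota(\sigma_1(\beta)) = \iota\bigl((-\sigma_1)(\beta)\bigr)$. Together with closure under addition and the compatibility of $\sigma_{id}$, the subring criterion gives the claim; equivalently, the compatible sections are precisely the elements of $\prod_{\alpha \in K}\rho(\alpha)$ whose coordinates are simultaneously matched up by the fixed family of ring homomorphisms $\{\pi_{\beta\alpha}, \iota\}$, and such a locus is automatically a subring.

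I do not expect a genuine obstacle here: the substance was front-loaded into the construction of $C_{\beta \alpha}$, $\pi_{\beta \alpha}$, and $\iota$ and into the verifications that $C_{\beta\alpha}$ is a subring and that $\pi_{\beta\alpha}$, $\iota$ are homomorphisms. The only point deserving a second glance is confirming that the ``largest convex subring'' $C_{\beta \alpha}$ really is closed under the ring operations as a subset of $\rho(\alpha)$ — but this is immediate from its definition as a subring, together with the fact that convexity inside a real closed field is preserved under the ring operations. Granting that, the corollary is a one-line consequence of the subring criterion applied to the family of structural maps.
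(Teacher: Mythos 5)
Your proposal is correct and follows essentially the same route as the paper: the paper proves closure under addition via the homomorphism-transfer computation through $C_{\beta\alpha}$, $\pi_{\beta\alpha}$, and $\iota$, and leaves multiplication and the remaining subring checks to be done ``in a similar manner,'' which is exactly what you carry out. Your explicit note that the whole argument reduces to $C_{\beta\alpha}$ being a subring and $\pi_{\beta\alpha}$, $\iota$ being ring homomorphisms is a fair articulation of what the paper's computation implicitly relies on.
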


\begin{thm}
(\cite{rcs},7) We consider $\prod_{\alpha \in K}\rho(\alpha)$ as a subset of $\pi^{-1}(K) \subseteq \sper A[t]$ as before, where $\pi:\sper A[t] \to \sper A$ is induced by the $A$-algebra structure homomorphism. An element $\sigma \in \prod_{\alpha \in K}\rho(\alpha)$ is a compatible section if and only if the following condition holds:
\begin{center}
If $\alpha,\beta \in K$ with $\beta \in \overline{\alpha}$, $\sigma(\beta)$ is the unique specialization of $\sigma(\alpha)$ in $\pi^{-1}(\beta)$.
\end{center}
\end{thm}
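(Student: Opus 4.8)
The plan is to fix a single pair $\alpha,\beta\in K$ with $\beta$ a specialization of $\alpha$ (i.e. $\alpha\subseteq\beta$): since both the definition of "compatible section" and the proposed condition are conjunctions over all such pairs, it suffices to prove the equivalence one pair at a time. Write $r:=\sigma(\alpha)\in\rho(\alpha)$ and $s:=\sigma(\beta)\in\rho(\beta)$, and let $\hat r\in\pi^{-1}(\alpha)$, $\hat s\in\pi^{-1}(\beta)$ be the points of $\sper A[t]$ corresponding to these values under the identification of $\rho(\alpha)$-values with points of the fibre fixed earlier; concretely $\hat r=\tau_\alpha^{-1}(\rho(\alpha)_{\geq 0})$ where $\tau_\alpha\colon A[t]\to\rho(\alpha)$ extends $\rho_\alpha$ by $t\mapsto r$, and similarly $\hat s=\hat\tau_\beta^{-1}(\rho(\beta)_{\geq 0})$ with $\hat\tau_\beta(t)=s$. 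The task reduces to showing that $r\in C_{\beta\alpha}$ together with $\pi_{\beta\alpha}(r)=\iota(s)$ holds if and only if $\hat s$ is the unique specialization of $\hat r$ lying in $\pi^{-1}(\beta)$.

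For the forward direction I would enlarge the commutative square defining $C_{\beta\alpha}$, $m_{\beta\alpha}$, $\rho_{\beta\alpha}$, $\iota$ by adjoining $t$. Since $\rho_\alpha(A)\subseteq f(A/\supp(\alpha))\subseteq C_{\beta\alpha}$ and, by hypothesis, $r\in C_{\beta\alpha}$, the map $\tau_\alpha$ factors as $A[t]\xrightarrow{\tau_\alpha'} C_{\beta\alpha}\hookrightarrow\rho(\alpha)$; composing with the residue map $\pi_{\beta\alpha}$ and using commutativity together with $\pi_{\beta\alpha}(r)=\iota(s)$ identifies $\pi_{\beta\alpha}\circ\tau_\alpha'$ with $\iota\circ\hat\tau_\beta$. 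Pulling back positive cones then finishes this half: the residue map sends $(C_{\beta\alpha})_{+}$ into $(\rho_{\beta\alpha})_{\geq 0}$ and $\iota$ is an order embedding, so taking preimages gives $\hat r=(\tau_\alpha')^{-1}((C_{\beta\alpha})_+)\subseteq(\tau_\alpha')^{-1}\pi_{\beta\alpha}^{-1}((\rho_{\beta\alpha})_{\geq 0})=\hat\tau_\beta^{-1}(\rho(\beta)_{\geq 0})=\hat s$, so $\hat s$ is a specialization of $\hat r$ over $\beta$. For uniqueness, let $\delta\in\pi^{-1}(\beta)$ be any specialization of $\hat r$. By the lemma above relating specializations to convex ideals, applied in $\sper A[t]$, the ideal $\supp(\delta)/\supp(\hat r)$ is convex in $A[t]/\supp(\hat r)\cong\tau_\alpha(A[t])\subseteq\rho(\alpha)$; running the construction of the convex hull on the pair $(\hat r,\delta)$ produces a convex subring $W\subseteq\rho(\alpha)$ containing $\tau_\alpha(A[t])$ with $\tau_\alpha^{-1}(m_W)=\supp(\delta)$. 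Intersecting with $A$ and using that $\delta$ lies over $\beta$ gives $\rho_\alpha^{-1}(m_W)=\supp(\beta)$, so $W$ is one of the convex subrings of $\rho(\alpha)$ that $C_{\beta\alpha}$ is, by definition, the largest of; hence $W\subseteq C_{\beta\alpha}$ and $m_{\beta\alpha}\subseteq m_W$. Pulling positive cones along $W\hookrightarrow C_{\beta\alpha}$ and its residue maps yields $\hat s\subseteq\delta$, and since the support of $\hat s$ inside the fibre ring $\rho(\beta)[t]$ is the maximal ideal $\langle t-s\rangle$, the point $\hat s$ is closed in $\pi^{-1}(\beta)$; a closed point of the fibre has no proper specialization within the fibre, so $\delta=\hat s$.

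For the converse I would run the same construction backwards. Assuming $\hat s$ is the unique specialization of $\hat r$ in $\pi^{-1}(\beta)$, one such specialization in particular exists, so the previous paragraph with $\delta:=\hat s$ gives $r=\tau_\alpha(t)\in W\subseteq C_{\beta\alpha}$, which is condition (1). Given (1), the forward construction produces the canonical specialization of $\hat r$ over $\beta$ carrying $t$ to $\pi_{\beta\alpha}(r)$; by hypothesis this equals $\hat s$, and comparing the images of $t$ through the order-determined (hence unique) embeddings of the common residue field into $\rho_{\beta\alpha}$ gives $\pi_{\beta\alpha}(r)=\iota(s)$, which is condition (2).

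The main obstacle is the uniqueness half of the forward direction: showing that the convex subring $W$ governing an arbitrary specialization $\delta$ of $\hat r$ over $\beta$ is forced inside $C_{\beta\alpha}$, and then upgrading the inclusion of supports $\supp(\hat s)\subseteq\supp(\delta)$ to an equality of points. This genuinely needs the order, not just supports — already for a curve point $\beta$ and $r=\sqrt{1+x}$ the ideal $\supp(\hat r)+\supp(\beta)A[t]$ is not prime, so several candidate supports survive and only the constraint $\hat r\subseteq\delta$ selects the right one — and it leans on both the maximality clause in the definition of $C_{\beta\alpha}$ and the fact that convex subrings of the real closed field $\rho(\alpha)$ are linearly ordered by inclusion. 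Everything else is diagram-chasing with the square built earlier and routine bookkeeping of positive cones under residue maps.
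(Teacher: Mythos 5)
Your proposal is correct, and it runs on the same machinery as the paper's proof --- the convex subring $C_{\beta\alpha}$ with its maximality, order-compatibility of residue maps, and the fact that convex subrings of $\rho(\alpha)$ are valuation rings forming a chain --- but it is organized genuinely differently in both halves. For uniqueness in the forward direction, the paper argues that $\{\sigma(\beta)\}=\sigma(K)\cap\pi^{-1}(\beta)$ is closed because $\sigma(K)$ is closed constructible and then uses the chain of specializations of $\sigma(\alpha)$; that step leans on a property of elements of $\mathscr{O}_{X}(K)$ which is not a hypothesis for a bare element of $\prod_{\alpha\in K}\rho(\alpha)$, whereas you work entirely fiberwise: an arbitrary specialization $\delta$ of $\hat r$ over $\beta$ yields a convex subring $W$ that maximality forces inside $C_{\beta\alpha}$, pulling positive cones back gives $\hat s\subseteq\delta$, and the closedness of $\hat s$ in $\pi^{-1}(\beta)$ is justified intrinsically by its maximal support $\langle t-s\rangle$ in $\rho(\beta)[t]$, so your argument is self-contained for exactly the objects the theorem quantifies over. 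In the converse, the paper forces $C_{\sigma(\beta)\sigma(\alpha)}=C_{\beta\alpha}$ by a somewhat terse contradiction, while you obtain condition (1) directly from $W\subseteq C_{\beta\alpha}$ (no equality of convex rings needed) and condition (2) by constructing the canonical specialization over $\beta$ out of condition (1), identifying it with $\hat s$ via the uniqueness hypothesis, and comparing the images of $t$ through the unique order-preserving embedding $\rho(\beta)\to\rho_{\beta\alpha}$ extending the map on $A/\supp(\beta)$, which is exactly $\iota$. In a full write-up the two compressed points to expand are the cone pull-back giving $\hat s\subseteq\delta$ (it needs $m_{\beta\alpha}\subseteq m_W$ and the composite residue map $W\to W/m_{\beta\alpha}\to W/m_{W}$, both available since convex subrings are valuation rings) and the verification that the canonical specialization you build from (1) really lies over $\beta$, which follows from the commutative square defining $C_{\beta\alpha}$; neither is a gap, just bookkeeping.
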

\begin{proof}
Before we prove the above, let's establish some crucial facts. Since $\sigma:K \to \pi^{-1}(K)$ is a local section of $\pi$,
\begin{center}
\begin{tikzcd}
K \ar[r,"\sigma"] & \pi^{-1}(K) \\ \sper \rho(\alpha) \ar[r,"\sigma"] \ar[u,hook] & \sper \rho(\sigma(\alpha)), \ar[u,hook]
\end{tikzcd}
\end{center}
commutes, and since $\pi(\sigma(\alpha))=\alpha$,
\begin{center}
\begin{tikzcd}
\sper \rho(\alpha) \ar[r,"\sigma"] \ar[rd,"id" swap]  & \sper \rho(\sigma(\alpha)) \ar[d,"\pi"] \\ & \sper \rho(\alpha)
\end{tikzcd}
\end{center}
also commutes. On the level of ring homomorphisms, we have
\begin{center}
\begin{tikzcd}
\rho(\alpha) \ar[r,hook] \ar[rr, bend left=20, "id"] & \rho(\sigma(\alpha)) \ar[r,hook] & \rho(\alpha),
\end{tikzcd}
\end{center}
proving $\rho(\alpha)=\rho(\sigma(\alpha))$. We then have a diagram of $A$-algebra homomorphisms
\begin{center}
\begin{tikzcd}
A \ar[rd,"\rho_{\alpha}"] \ar[d] & \\ A[t] \ar[r,"\rho_{\sigma(\alpha)}" swap] & \rho(\alpha),
\end{tikzcd}
\end{center} 
which commutes because 
\begin{center}
\begin{tikzcd}
\sper A[t] \ar[r,"\pi"] & \sper A  \\ \sper \rho(\alpha) \ar[u,hook] \ar[r,"="]  & \sper \rho(\alpha) \ar[u,hook]
\end{tikzcd}
\end{center}
does. The element $\rho_{\sigma(\alpha)}(t) \in \sper \rho(\alpha)$ must be in the image of $\rho_\alpha$, because the diagram still commutes if we include the homomorphism induced by the local section $\sigma$;
\begin{center}
\begin{tikzcd}
A \ar[rd,"\rho_{\alpha}"] \ar[d,bend left=20] & \\ A[t] \ar[r,"\rho_{\sigma(\alpha)}" swap] \ar[u,bend left=20] & \rho(\alpha).
\end{tikzcd}
\end{center}
Consequently, when $\sigma(\alpha)$ is considered an element of $\rho(\alpha)$, it equals $\rho_{\sigma(\alpha)}(t)$ by commutativity.

Now, let $\sigma \in \prod_{\alpha \in K}\rho(\alpha)$ be a compatible section, and recall the commutative diagram
\begin{center}
\begin{tikzcd}
& C_{\beta \alpha} \ar[d,hook] \ar[rd,"\pi_{\beta \alpha}"] & \\ A/\supp(\alpha) \ar[d] \ar[r,"\rho_\alpha"] \ar[ru,hook,"f"] & \rho(\alpha)  & \rho_{\beta \alpha} \\  A/\supp(\beta) \ar[r,"\rho_\beta"]  & \rho(\beta) \ar[ru,dotted,"\iota" swap], &
\end{tikzcd}
\end{center}
where $\alpha,\beta \in K$ and $\beta \in \bar{\alpha}$. We have a corresponding commutative diagram for $\sigma(\beta)$, which is contained in $\overline{\sigma(\alpha)}$;
\begin{center}
\begin{tikzcd}
& C_{\sigma(\beta) \sigma(\alpha)} \ar[d,hook] \ar[rd,"\pi_{\sigma(\beta) \sigma(\alpha})"] & \\ A[t]/\supp(\sigma(\alpha)) \ar[d] \ar[r,"\rho_{\sigma(\alpha)}"] \ar[ru,hook] & \rho(\alpha)  & \rho_{\sigma(\beta) \sigma(\alpha)} \\  A[t]/\supp(\sigma(\beta)) \ar[r,"\rho_{\sigma(\beta)}"]  & \rho(\beta) \ar[ru,dotted,"\iota" swap]. &
\end{tikzcd}
\end{center}
We then have
\begin{align*}
 &\iota \circ \rho_{\sigma(\beta)}(t) \\ =& \iota(\sigma(\beta)) \\ =& \pi_{\sigma(\beta) \sigma(\alpha)}(\sigma(\alpha)) \\ =& \pi_{\sigma(\beta) \sigma(\alpha)} \circ \rho_{\sigma(\alpha)}(t),
\end{align*}
and $\sigma(\beta)$ and $\sigma(\alpha)$ are equal as elements of $\rho_{\sigma(\beta) \sigma(\alpha)}$. It is left to show that this specialization is unique.

Suppose $\gamma \in \pi^{-1}(\beta)$ is such that $\gamma \in \overline{\sigma(\alpha)}$. Since \[ \{\sigma(\beta)\}=\sigma(K) \cap \pi^{-1}(\beta), \]
and $\sigma(K)$ is closed constructible in $\sper A[t]$, $\{\sigma(\beta)\}$ is closed in $\pi^{-1}(\beta)$. By \cite{rcs}, the specializations of any point, and specifically of $\sigma(\alpha) \in \sper A[t]$, form a chain ordered by inclusion, and so we must have 
\[  \sigma(\alpha) \subseteq \gamma \subseteq \sigma(\beta).  \]
Let $C_{\gamma \sigma(\alpha)}$ be the largest convex subring of $\rho(\sigma(\alpha))=\rho(\alpha)$ with a maximal ideal $m_{\gamma}$ such that 
\[  \rho_{\alpha}^{-1}(m_{\gamma})=\supp(\gamma)/\supp(\sigma(\alpha)). \]
We know $\supp(\gamma) \subseteq \supp(\sigma(\beta))$, and so the largest convex subring $C_{\sigma(\beta)\sigma(\alpha)}$ with a maximal ideal $m_{\sigma(\beta)}$ must contain the largest convex subring $C_{\gamma \sigma(\alpha)}$ with maximal ideal $m_\gamma$. Since convex subrings of real closed fields are valuation rings by (\cite{valuationsRCR}, 6) , It follows that $\gamma \in \overline{\sigma(\beta)}$, implying they are equal, and that specializations are unique in $\pi^{-1}(\beta)$. 
\par Now, we will show that if $\sigma:K \to \pi^{-1}(K)$ is a morphism of spectral spaces such that for all $\alpha,\beta \in K$ with $\alpha \subseteq \beta$,  $\sigma(\beta)$ is the only specialization of $\sigma(\alpha)$ in $\pi^{-1}(\beta)$, then $\sigma$ is a compatible section. For $\alpha$ and $\beta$ with $\beta \supseteq \alpha$, we have a commutative diagram
\begin{center}
\begin{tikzcd}
A[t]/\supp(\sigma(\alpha)) \ar[d,"s" swap] \ar[r,hook]  & C_{\sigma(\beta) \sigma(\alpha)} \ar[r,hook] &  \rho(\alpha) \\ A/\supp(\alpha) \ar[r,hook]  & C_{\beta \alpha}. \ar[ru,hook] & 
\end{tikzcd}
\end{center}
Since both $C_{\beta \alpha}$ and $C_{\sigma(\beta) \sigma(\alpha)}$ are convex subrings of $\rho(\alpha)$ with maximal ideals that have inverse image equal to 
\[  \supp(\sigma(\beta))/\supp(\sigma(\alpha)) \subset A[t]/\supp(\sigma(\alpha))  \]
under this diagram, 
\[  C_{\sigma(\beta) \sigma(\alpha)} \subseteq C_{\beta \alpha}. \]
Arguing towards a contradiction, suppose 
\[  C_{\sigma(\beta) \sigma(\alpha)} \neq C_{\beta \alpha}. \]
Let
\begin{center}
\begin{tikzcd}
A[t] \ar[r,two heads] & A[t]/\supp(\sigma(\alpha)) \ar[r,hook] & C_{\beta \alpha} \ar[r,two heads] & \rho_{\beta \alpha}
\end{tikzcd}
\end{center}
be a homomorphism mapping $t$ to an element of $C_{\beta \alpha} \setminus C_{\sigma(\beta) \sigma(\alpha)}$. Then
\begin{center}
\begin{tikzcd}
A[t] \ar[r,two heads] & A[t]/\supp(\sigma(\alpha)) \ar[r,hook] \ar[rd,hook] & C_{\beta \alpha} \ar[r,two heads] & \rho_{\beta \alpha} \\  & &  C_{\sigma(\beta)\sigma(\alpha)} \ar[r,two heads] \ar[u,hook] &  \rho_{\sigma(\beta)\sigma(\alpha)}
\end{tikzcd}
\end{center}
must yield a different specialization, contradicting our assumptions. Hence
\[  C_{\sigma(\beta) \sigma(\alpha)} = C_{\beta \alpha}, \]
and as a consequence,
\begin{center}
\begin{tikzcd}
& C_{\beta \alpha}=C_{\sigma(\beta) \sigma(\alpha)} \ar[d,hook] \ar[rd,"\pi_{\sigma(\beta) \sigma(\alpha})"] & \\ A[t]/\supp(\sigma(\alpha)) \ar[d] \ar[r,"\rho_{\sigma(\alpha)}"] \ar[ru,hook]  & \rho(\alpha)  & \rho_{\sigma(\beta) \sigma(\alpha)} \\  A[t]/\supp(\sigma(\beta)) \ar[r,"\rho_{\sigma(\beta)}"]  & \rho(\beta) \ar[ru,dotted,"\iota" swap] &
\end{tikzcd}
\end{center}
commutes for all $\alpha \in K$ with specialization $\beta \in K$. We have thus proved $\sigma$ is compatible, and we are done.
\end{proof}

This theorem is powerful, as it is half of the work necessary to establish $\mathscr{O}_{X}(K)$ is a ring. As an application, suppose $\sigma:K \to \pi^{-1}(K)$ is compatible as before, and further suppose 
\[ P(s):=\{\alpha \in K:\sigma(\alpha) \geq 0 \} \]
is constructible in $K$. Let $\beta$ be a specialization of $\alpha$. We then have 
\[  \iota \circ \rho_{\sigma(\beta)}(t) =\pi_{\sigma(\beta) \sigma(\alpha)} \circ \rho_{\sigma(\alpha)}(t), \]
and since $\iota$ and $\pi_{\sigma(\beta) \sigma(\alpha)}$ are both order preserving, 
\[  \sigma(\beta)=\rho_{\sigma(\beta)}(t) \geq 0.  \]
Hence $\beta \in P(s)$, and since $P(s)$ is closed under specialization and constructible, $P(s)$ is closed.
 
The other half of proving $\mathscr{O}_{X}(K)$ is a ring lies in establishing a clean way to work with morphisms with constructible image. As seen previously, working with constructible subsets is simplest with the machinery of model theory, so that is the machinery we shall use;

\begin{defn}
(\cite{rcs},8) Let $\pi:\sper A[t] \to \sper A$ be induced by the $A$-algebra structure homomorphism, and let $K \subseteq \sper A$ be (pro)constructible. A local section $\sigma:K \to \pi^{-1}(K)$ of $\pi$ is a \textit{constructible section} if $\sigma(K)$ is constructible in $\pi^{-1}(K)$
\end{defn}

\begin{prop}
(\cite{rcs},8) A local section $\sigma:K \to \pi^{-1}(K)$ of $\pi$ is constructible if and only if there is some formula $\Phi(T)$ in the language $\mathscr{L}_{\leq}(A)$ with variable $T$ such that 
\begin{itemize}
\item $\Phi(\sigma(\alpha))$, and
\item $ \exists x:\Phi(x) \, \wedge \, (\,\forall y \text{ such that } \Phi(y) \implies x=y ) $ 
\end{itemize}
holds for all $\alpha \in K$.
\end{prop}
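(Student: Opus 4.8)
The plan is to realize $\sigma(K)$ as the trace on $\pi^{-1}(K)$ of a constructible subset $C\subseteq\sper A[t]$ and to translate $C$ into a formula of $\mathscr{L}_{\leq}(A)$ whose single free variable $T$ records the value of $t$. Constructibility of $\sigma(K)$ in the proconstructible subspace $\pi^{-1}(K)$ means precisely that $\sigma(K)=\pi^{-1}(K)\cap C$ for some constructible $C\subseteq\sper A[t]$. By the model-theoretic description of constructible sets (applied to $A[t]$ in place of $A$), such a $C$ is cut out by a sentence $\varphi(b_1,\dots,b_m)$ of $\mathscr{L}_{\leq}(A[t])$; writing each $b_i\in A[t]$ as $\sum_j a_{ij}t^j$ with $a_{ij}\in A$ and replacing the constant $c_t$ by a free variable $T$ (so that $c_{b_i}$ becomes $\sum_j c_{a_{ij}}T^j$) produces a formula $\Phi(T)$ over $\mathscr{L}_{\leq}(A)$ with the property that a point $\gamma\in\sper A[t]$ with $t(\gamma)=r$ lies in $C$ if and only if $\rho(\gamma)\models\Phi(r)$. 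Since $\mathrm{Th}$ has quantifier elimination (equivalently, is model complete), I may assume $\Phi$ is quantifier-free, so that satisfaction of $\Phi(r)$ is absolute, i.e.\ independent of which real closed field containing $r$ and the interpreted parameters one evaluates it in. This absoluteness is the one nontrivial ingredient behind the fibrewise arguments below.

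For the forward implication, assume $\sigma(K)=\pi^{-1}(K)\cap C$ with $\Phi$ as above and fix $\alpha\in K$. By the theorem characterizing compatible sections proved above (more precisely, by the diagram chase in its preamble, which needs only that $\sigma$ is a local section), $\rho(\sigma(\alpha))=\rho(\alpha)$ and, viewed inside $\rho(\alpha)$, the point $\sigma(\alpha)$ of the fibre $\pi^{-1}(\alpha)$ is the element $\rho_{\sigma(\alpha)}(t)$, which we keep denoting $\sigma(\alpha)$. Since $\sigma(\alpha)\in C$ we get $\rho(\alpha)\models\Phi(\sigma(\alpha))$, the first bullet. For the second, suppose $r\in\rho(\alpha)$ satisfies $\Phi$ and let $\gamma_r\in\pi^{-1}(\alpha)$ be the point given by the homomorphism $A[t]\to\rho(\alpha)$ extending $\rho_\alpha$ and sending $t\mapsto r$, with the induced order. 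Then $t(\gamma_r)=r$ and, by absoluteness, $\rho(\gamma_r)\models\Phi(r)$, so $\gamma_r\in\pi^{-1}(K)\cap C=\sigma(K)$; since $\pi(\gamma_r)=\alpha$ and $\sigma$ is a section, $\gamma_r=\sigma(\alpha)$, whence $r=\sigma(\alpha)$. Thus $\sigma(\alpha)$ is the unique solution of $\Phi$ in $\rho(\alpha)$.

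For the converse, suppose $\Phi(T)$ has the two stated properties and let $C\subseteq\sper A[t]$ be the constructible set it defines via $T\mapsto c_t$; the claim is $\sigma(K)=\pi^{-1}(K)\cap C$. The inclusion $\subseteq$ is immediate from the first bullet exactly as above. For $\supseteq$, take $\gamma\in\pi^{-1}(K)\cap C$ and put $\alpha=\pi(\gamma)\in K$. Since $\supp(\gamma)\cap A=\supp(\alpha)$, there is an order-embedding $\iota\colon\rho(\alpha)\hookrightarrow\rho(\gamma)$ over $A/\supp(\alpha)$ (uniqueness of the real closure), and it is automatically order-reflecting. In $\rho(\gamma)$ both $t(\gamma)$ and $\iota(\sigma(\alpha))$ satisfy $\Phi$ — the former because $\gamma\in C$, the latter by the first bullet and absoluteness — so the second bullet, valid in $\rho(\alpha)$ and hence in $\rho(\gamma)$ by model completeness, forces $t(\gamma)=\iota(\sigma(\alpha))$. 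Consequently $\rho_\gamma\colon A[t]\to\rho(\gamma)$ takes values in $\iota(\rho(\alpha))$ and factors as $\iota\circ\rho_{\sigma(\alpha)}$; since $\iota$ preserves and reflects the order, $\rho_\gamma$ and $\rho_{\sigma(\alpha)}$ have the same preimage of the positive cone, so $\gamma=\sigma(\alpha)$ as points of $\sper A[t]$. Hence $\gamma\in\sigma(K)$, and therefore $\sigma$ is a constructible section.

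I expect the only genuine obstacle to be the careful setup of the correspondence between constructible subsets of $\sper A[t]$ and $\mathscr{L}_{\leq}(A)$-formulas carrying a distinguished free variable for $t$: in particular, making precise that the free variable is to be substituted with the evaluation $t(\gamma)$ and that the coefficient bookkeeping $c_{b_i}\mapsto\sum_j c_{a_{ij}}T^j$ is legitimate. Everything else rests on tools already in hand — the bijection between points of $\pi^{-1}(\alpha)$ lying over $\alpha$ and elements of $\rho(\alpha)$ recorded earlier, the identification $\sigma(\alpha)=\rho_{\sigma(\alpha)}(t)$ from the compatible-section theorem, and quantifier elimination for real closed fields, which renders satisfaction of $\Phi$ insensitive to the ambient real closed field.
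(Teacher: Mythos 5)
Your proof is correct and takes essentially the same route as the paper: both arguments translate between a defining sentence for $\sigma(K)$ over $\mathscr{L}_{\leq}(A[t])$ and a formula $\Phi(T)$ over $\mathscr{L}_{\leq}(A)$ by exchanging the constant $t$ with the free variable $T$. The fiberwise uniqueness check in the forward direction and the quantifier-elimination/model-completeness transfer from $\rho(\alpha)$ to $\rho(\gamma)$ in the converse are precisely the steps the paper asserts without detail, and you supply them correctly.
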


\begin{proof}
First, suppose $\sigma:K \to \pi^{-1}(K)$ is a constructible section of $\pi$. Let \[ \phi(p_1(t),\hdots , p_r(t)) \]
be a sentence in $\mathscr{L}_{\leq}(A[t])$ defining $\sigma(K) \subseteq \pi^{-1}(K)$. Let $\Phi$ be the sentence
\[  \exists T:\phi(p_{1}(T),\hdots ,p_{r}(T)) \]
in $\mathscr{L}_{\leq}(A)$ with free variable $T$. We then have 
\[ \Phi(\sigma(\alpha))=\phi(p_{1}(\sigma(\alpha)),\hdots ,p_{r}(\sigma(\alpha)))=\phi(p_{1}(\rho_{\sigma(\alpha)}(t)),\hdots ,p_{r}(\rho_{\sigma(\alpha)}(t))), \]
which is a true sentence in $\rho(\alpha)=\rho(\sigma(\alpha))$. We also know $\rho_{\sigma(\alpha)}(t) \in \rho(\alpha)$ is the unique element of $\rho(\alpha)$ satisfying $\Phi$, since the image of $\alpha$ under $\sigma$ uniquely specifies the image of $t$ under $A[t] \to \rho(\alpha)$. 
\par Now suppose there is a sentence $\Phi(T)$ in the language $\mathscr{L}_{\leq}(A)$ with variable $T$ such that 
\begin{itemize}
\item $\Phi(\sigma(\alpha))$, and
\item $ \exists x:\Phi(x) \, \wedge \, (\,\forall y \text{ such that } \Phi(y) \implies x=y ) $ 
\end{itemize}
holds for all $\alpha \in K$. We hope to prove $\sigma(K)$ is constructible in $\pi^{-1}(K)$, so it suffices to find a sentence $\phi$ in $\mathscr{L}_{\leq}(A[t])$ which describes $\sigma(K)$. Define $\phi(t)$ in the language $\mathscr{L}_{\leq}(A[t])$ to be $\Phi(t)$. The two defining properties of $\Phi(T)$ are inherited by $\phi(t)$. The first property of $\phi(t)$ guarantees that for each $\alpha$, $\phi(\sigma(\alpha))$ holds, and second property assures us that $\sigma(\alpha)$ is the unique input for which $\phi(t)$ holds. Hence $\phi(t)$ describes $\sigma(K)$ and the claim is proven.
\end{proof}

We see the sentence 
\[ \Phi_{a}(T):\exists T \,\text{ such that }\, T-a=0 \]
in the language $\mathscr{L}_{\leq}(A)$ describes the image of the $a \in A$ section, so by Proposition 27 it is constructible. If $\sigma_1,\sigma_2$ are constructible sections with images defined by sentences $\Phi_1(T),\Phi_2(T)$, the sentence
\[ \exists X \, \exists Y:T=X+Y \wedge \Phi_1(X) \wedge \Phi_2(Y)  \]
defines their sum by Proposition 27, and a similar trick works for multiplication. We may conclude

\begin{cor}
(\cite{rcs},9) The set of constructible sections $\sigma:K \to \pi^{-1}(K)$ forms a unital subring of $\prod_{\alpha \in K}\rho(\alpha)$.
\end{cor}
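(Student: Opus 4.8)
The plan is to deduce the whole statement from Proposition 27, which converts constructible sections into first-order formulas over $\mathscr{L}_{\leq}(A)$ that pin down a single element of the real closed model at every $\alpha\in K$. Via the injection $i\colon\mathscr{O}_X(K)\hookrightarrow\prod_{\alpha\in K}\rho(\alpha)$, $\sigma\mapsto(\sigma(\alpha))_{\alpha\in K}$ — which, as recorded after Theorem 25, identifies $\sigma(\alpha)$ with $\rho_{\sigma(\alpha)}(t)\in\rho(\alpha)$ and relies on $\rho(\alpha)=\rho(\sigma(\alpha))$ — being a subring reduces to three checks: the set of tuples arising from constructible sections contains $0$ and $1$, and is closed under $+$, $\cdot$ and additive inverse. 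The $0$ and $1$ come from the constant sections attached to $0,1\in A$: the formulas $\Phi_0(T)\colon T=0$ and $\Phi_1(T)\colon T-1=0$ each hold at a unique point of $\rho(\alpha)$ for every $\alpha$, so Proposition 27 makes those sections constructible.

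For closure, let $\sigma_1,\sigma_2$ be constructible sections and let $\Phi_1(T),\Phi_2(T)\in\mathscr{L}_{\leq}(A)$ be the formulas supplied by Proposition 27. First I would observe that their pointwise sum and product, formed in $\prod_{\alpha\in K}\rho(\alpha)$, transport back to genuine set-sections of $\pi$ using the bijection established earlier between points of $\pi^{-1}(\alpha)\cong\sper\rho(\alpha)[t]$ and elements of $\rho(\alpha)$, and I would note (with a short argument) that these are again morphisms of spectral spaces. Then I apply Proposition 27 in the converse direction to the formula
\[
\Psi_+(T)\colon\ \exists X\,\exists Y\ \bigl(T=X+Y\ \wedge\ \Phi_1(X)\ \wedge\ \Phi_2(Y)\bigr).
\]
At each $\alpha$ this holds with witnesses $X=\sigma_1(\alpha)$, $Y=\sigma_2(\alpha)$, and the uniqueness clauses for $\Phi_1,\Phi_2$ force any admissible $X,Y$ to be exactly these, so $\Psi_+$ defines the single element $\sigma_1(\alpha)+\sigma_2(\alpha)$; hence $\sigma_1+\sigma_2$ is constructible. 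Replacing $T=X+Y$ by $T=X\cdot Y$ gives the product, and $\exists X\,(T+X=0\,\wedge\,\Phi_1(X))$ — equivalently, multiplication by the constructible constant section $-1$ — gives $-\sigma_1$. Associativity, commutativity and distributivity are then inherited from $\prod_{\alpha\in K}\rho(\alpha)$.

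The main point requiring care is the verification that $\Psi_+$ and its multiplicative and inverse analogues really are formulas of $\mathscr{L}_{\leq}(A)$ in the single free variable $T$ — the quantified $X,Y$ range over the real closed model, and the parameters buried in $\Phi_1,\Phi_2$ are constants $c_a$ with $a\in A$ — and that the uniqueness clause demanded by Proposition 27 is genuinely inherited rather than merely asserted. Once this bookkeeping, together with the identification $\rho(\alpha)=\rho(\sigma(\alpha))$ and the fact that the new maps are morphisms of spectral spaces, is in place, the corollary follows with no idea beyond Proposition 27; so I anticipate no serious obstacle, the real work having been done in setting up the model-theoretic dictionary in that proposition.
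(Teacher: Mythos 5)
Your proposal is correct and follows essentially the same route as the paper: the paper also obtains the constant sections from elements of $A$ via formulas like $T-a=0$, and closes under addition with the very formula $\exists X\,\exists Y\,(T=X+Y\wedge\Phi_1(X)\wedge\Phi_2(Y))$, invoking Proposition 27 in both directions, with multiplication handled by the analogous trick. Your write-up merely spells out the uniqueness bookkeeping and the identification $\rho(\alpha)=\rho(\sigma(\alpha))$ that the paper leaves implicit.
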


Finally, we conclude with a proof that $\mathscr{O}_{X}(K)$ is the ring of constructible and compatible sections $\sigma \in \prod_{\alpha \in K}\rho(\alpha)$. Since $\mathscr{O}_{X}$ is already a collection of functions on $X$, we may conclude the constructible and compatible sections form a sheaf, as a treat.

\begin{thm}
(\cite{rcs},9-10) For each (pro)constructible subspace $K \subseteq X$, the image of the map $i:\mathscr{O}_{X}(K) \to \prod_{\alpha \in K}\rho(\alpha)$ is the ring of constructible and compatible sections.
\end{thm}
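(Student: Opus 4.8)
Since the set map $i$ is already known to be injective, it suffices to prove two inclusions: that $i(\sigma)$ is both a constructible section and a compatible section for every $\sigma \in \mathscr{O}_{X}(K)$, and conversely that every element of $\prod_{\alpha \in K}\rho(\alpha)$ which is simultaneously a constructible section and a compatible section lies in $i(\mathscr{O}_{X}(K))$. Throughout I identify an element of $\prod_{\alpha \in K}\rho(\alpha)$ with the associated set-theoretic section $\sigma : K \to \pi^{-1}(K)$ of $\pi$, via the correspondence between $\rho(\alpha)$-values and points of $\pi^{-1}(\alpha) \cong \sper\rho(\alpha)[t]$ set up earlier. The two recurring tools are Theorem 25, which rephrases compatibility of $\sigma$ as the statement that $\sigma(\beta)$ is the unique specialization of $\sigma(\alpha)$ in $\pi^{-1}(\beta)$ whenever $\beta \in \overline{\alpha}$ with $\alpha,\beta \in K$, and the standard spectral-space fact --- already invoked in the discussion following Theorem 25 --- that a constructible subset which is stable under specialization is closed.

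For the first inclusion, take $\sigma \in \mathscr{O}_{X}(K)$, so $\sigma$ is a morphism of spectral spaces, a section of $\pi$, with $\sigma(K)$ closed and constructible in $\pi^{-1}(K)$. Then $\sigma$ is a constructible section directly from Definition 26. To see it is compatible, I verify the hypothesis of the ``if'' half of Theorem 25: for $\beta \in \overline{\alpha}$ with $\alpha,\beta \in K$, the point $\sigma(\beta)$ is the unique specialization of $\sigma(\alpha)$ in $\pi^{-1}(\beta)$. Existence is immediate from continuity of $\sigma$, since $\beta \in \overline{\alpha}$ forces $\sigma(\beta) \in \overline{\sigma(\alpha)}$, and $\sigma(\beta) \in \pi^{-1}(\beta)$ because $\sigma$ is a section. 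Uniqueness is exactly the valuation-ring argument already carried out inside the proof of Theorem 25: it uses only that $\{\sigma(\beta)\} = \sigma(K) \cap \pi^{-1}(\beta)$ is closed in $\pi^{-1}(\beta)$ (which holds because $\sigma(K)$ is closed constructible), that the specializations of $\sigma(\alpha)$ form a chain, and that convex subrings of $\rho(\alpha)$ are valuation rings --- crucially it does \emph{not} use compatibility, so it is legitimate to invoke it here. Hence $\sigma$ is compatible, and $i(\sigma)$ lies in the intersection.

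For the reverse inclusion, let $\sigma \in \prod_{\alpha \in K}\rho(\alpha)$ be a constructible section (so in particular the associated map $\sigma : K \to \pi^{-1}(K)$ is a morphism of spectral spaces, splitting $\pi$, with constructible image) which is moreover compatible; I must show $\sigma \in \mathscr{O}_{X}(K)$, i.e.\ that $\sigma(K)$ is also \emph{closed}. By the spectral-space fact recalled above it is enough to check that $\sigma(K)$ is stable under specialization in $\pi^{-1}(K)$. So suppose $\gamma \in \pi^{-1}(K)$ is a specialization of $\sigma(\alpha)$ for some $\alpha \in K$, and put $\beta := \pi(\gamma)$; continuity of $\pi$ gives $\beta \in \overline{\alpha}$, so $\beta \in K$ specializes $\alpha$. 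Compatibility of $\sigma$ together with Theorem 25 then says $\sigma(\beta)$ is the \emph{unique} specialization of $\sigma(\alpha)$ in $\pi^{-1}(\beta)$; since $\gamma \in \pi^{-1}(\beta)$ is such a specialization, $\gamma = \sigma(\beta) \in \sigma(K)$. Thus $\sigma(K)$ is closed constructible, and since $\pi|_{\pi^{-1}(K)} \circ \sigma = \text{id}_{K}$ this places $\sigma$ in $\mathscr{O}_{X}(K)$ with $i(\sigma) = \sigma$. Combining the two inclusions, $i(\mathscr{O}_{X}(K))$ is the intersection of the constructible sections (a subring by Corollary 28) with the compatible sections (a subring by Corollary 24), hence itself a ring, which justifies the phrasing of the statement.

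The heart of the argument --- and the step I expect to be the real obstacle --- is the second inclusion, specifically the passage from the pointwise compatibility condition of Definition 23 to the global topological conclusion that $\sigma(K)$ is closed. The leverage is entirely supplied by Theorem 25: read correctly, ``$\sigma(\beta)$ is the unique specialization of $\sigma(\alpha)$ in the fibre over $\beta$'' is precisely the assertion that the graph $\sigma(K)$ contains every specialization (lying over $K$) of each of its points, after which ``constructible $+$ specialization-stable $\Rightarrow$ closed'' finishes the job. A subtler point, flagged above, is that in the first inclusion the uniqueness-of-specialization claim must be seen to follow from closed-constructibility of $\sigma(K)$ alone rather than from compatibility, so that it can be fed into the converse direction of Theorem 25 without circularity; and one should keep in mind that the identification $\prod_{\alpha \in K}\rho(\alpha) \hookrightarrow \pi^{-1}(K)$ is arranged to be compatible with $\pi$ and with the constructible topology, so that ``closed constructible subset of $\pi^{-1}(K)$'' carries the intended meaning.
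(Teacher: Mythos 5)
Your strategy coincides with the paper's: both directions hinge on Theorem 25, and the crux of the converse is exactly the step you isolate, namely that compatibility (via Theorem 25) makes $\sigma(K)$ stable under specialization, after which ``constructible and specialization-stable implies closed'' finishes. Your forward direction is correct, though more roundabout than necessary: once $\sigma(K)$ is closed, any specialization $\gamma$ of $\sigma(\alpha)$ lying in $\pi^{-1}(\beta)$ already lies in $\sigma(K)$, hence in $\sigma(K)\cap\pi^{-1}(\beta)=\{\sigma(\beta)\}$; there is no need to rerun the chain-of-specializations/valuation-ring argument from inside Theorem 25 (which is what the paper does at this point -- it uses the one-line argument just given).

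The genuine gap is in the reverse inclusion, where you assert parenthetically that a constructible section ``is a morphism of spectral spaces.'' Definition 26 and Proposition 27 only provide a set-theoretic splitting of $\pi$ over $K$ whose image is constructible; continuity of the map $\sigma\colon K\to\pi^{-1}(K)$ is not part of that data, and establishing it is roughly half of the paper's argument in this direction: continuity in the constructible topology because $\pi\lvert_{\sigma(K)}\colon\sigma(K)\to K$ is a bijective spectral map, hence a homeomorphism for the constructible topology, with $\sigma$ as its inverse; and continuity in the weak topology because for a closed constructible $C\subseteq\pi^{-1}(K)$ the preimage $\sigma^{-1}(C)$ is constructible and, again by compatibility and Theorem 25, stable under specialization, hence closed. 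You cannot dodge this by reading Definition 22 as requiring no continuity: your own forward direction appeals to ``continuity of $\sigma$'' to produce the specialization $\sigma(\beta)\in\overline{\sigma(\alpha)}$, so you are implicitly treating $\mathscr{O}_{X}(K)$ as consisting of spectral-space morphisms, and then placing a constructible-and-compatible $\sigma$ into $\mathscr{O}_{X}(K)$ obliges you to prove, not assume, that it is continuous in both topologies. As written, one of your two directions is missing exactly the continuity lemma the paper supplies; adding that argument (it uses only facts you already cite) closes the proof.
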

\begin{proof}
Let $\sigma \in \mathscr{O}_{X}(K)$, and suppose $\alpha,\beta \in K$ with $\beta \in \overline{\alpha}$. Since $\sigma:K \to \pi^{-1}(K)$ is continuous in the weak and constructible topologies, $\sigma(\beta)$ is a specialization of $\sigma(\alpha)$. I claim $\sigma(\beta)$ is the unique specialization of $\sigma(\alpha)$ in $\pi^{-1}(\beta)$. If $\gamma \in \pi^{-1}(\beta)$ is any specialization of $\sigma(\alpha)$, we know $\sigma(K)$ is closed, which implies $\gamma \in \sigma(K)$. Consequently,
\[ \gamma \in \sigma(K) \cap \pi^{-1}(\beta)=\{\sigma(\beta)\}, \]
and $\sigma(\beta)$ is the unique specialization of $\sigma(\alpha)$ in $\pi^{-1}(\beta)$. By Theorem 25, $\sigma$ is compatible, and $\sigma$ is constructible by definition, so we have proved $i(\mathscr{O}_{X}(K))$ is contained in the ring of constructible and compatible sections.
\par It is left to show that each constructible, compatible section is contained in the image of $i$. Let $\sigma \in \prod_{\alpha \in K}\rho(\alpha)$ be a constructible and compatible section, which we can certainly view as a map $\sigma:K \to \pi^{-1}(K)$ which sends $\alpha$ to $\pi_{\alpha}(\sigma) \in \rho(\alpha)$. As before, $\sigma(K)$ is constructible by definition. The morphism 
\[ \pi\biggr\lvert_{\sigma(K)}:\sigma(K) \to K \]
is bijective and continuous with respect to both the weak and constructible topologies, and with respect to the latter, $\pi\lvert_{\sigma(K)}$ is a homeomorphism. Since $\sigma:K \to \sigma(K)$ is the inverse of $\pi\lvert_{\sigma(K)}$, $\sigma:K \to \pi^{-1}(K)$ is continuous in the constructible topology. We still need to show that $\sigma(K)$ is closed, and that $\sigma:K \to \pi^{-1}(K)$ is continuous in the weak topology.
\par Let $C$ be a closed constructible subset of $\pi^{-1}(K)$. We know $\sigma^{-1}(C)$ is constructible, and it is left to show $\sigma^{-1}(C)$ is closed under specialization. Suppose $\alpha$ is in $\sigma^{-1}(C)$ with specialization $\beta$. Then $\sigma(\beta) \supseteq \sigma(\alpha)$, and since $C$ is closed, $\sigma(\beta) \in C$. Hence $\beta \in \sigma^{-1}(C)$, implying $\sigma^{-1}(C)$ is closed constructible and $\sigma:K \to \pi^{-1}(K)$ is continuous in the weak topology.
\par To show $\sigma(K)$ is closed, it suffices by constructibility to show $\sigma(K)$ is closed under specialization. Suppose $\alpha \in K$, and let $\gamma$ be a specialization of $\sigma(\alpha)$. Since $\{\sigma(\alpha)\}$ is closed in $\pi^{-1}(\alpha)$, if $\pi(\gamma)=\alpha$, then $\gamma=\sigma(\alpha)$. On the other hand, if $\pi(\gamma) \neq \alpha$, we still need to show $\gamma$ is in the image of $\sigma$. Since $\gamma \supseteq \sigma(\alpha)$, we know $\pi(\gamma) \supseteq \alpha$, i.e. $\pi(\gamma)$ is a specialization of $\alpha$. By Theorem 25, $\sigma(\pi(\gamma))$ is the unique specialization of $\sigma(\alpha)$ in $\pi^{-1}(\pi(\gamma))$, and so $\beta=\sigma(\pi(\beta))$ and we may conclude this proof. 
\end{proof}

From here onward, we will refer to $\mathscr{O}_{X}$ as the sheaf of constructible and compatible sections on $X=\sper A$, and Theorem 29 shows us that we have two useful ways in which to think of this sheaf.

\begin{defn}
Let $X=\sper A$. The \textit{real closure} of $A$ is the ring $\mathscr{O}_{X}(X)$. For a (pro)constructible subset $K \subseteq X$, the \textit{real closure of $A$ on $K$} is the ring $\mathscr{O}_{X}(K)$. A ring $A$ is \textit{real closed} if it is isomorphic to its real closure.
\end{defn}

\subsection{Real Closed Rings are (co)Complete}

We now turn towards the fundamental categorical properties of real closed rings, in an attempt to prove that the category of real closed rings with ring homomorphisms contains all limits and all colimits over small diagrams. 

\begin{defn}
(\cite{semiAlgFuncNreflectors}, Section 2, 35) Let $\mathcal{C},\mathcal{S}$ be two categories with $\mathcal{S}$ a full subcategory of $\mathcal{C}$. Let $\iota:\mathcal{S} \to \mathcal{C}$ be the inclusion functor. If $\iota$ has a left adjoint $r:\mathcal{C} \to \mathcal{S}$, i.e. for all $A \in \text{Ob}(\mathcal{C})$ and $B \in \text{Ob}(\mathcal{S})$
\[ \text{Hom}_{\mathcal{C}}(A,\iota(B))=\text{Hom}_{\mathcal{S}}(r(A),B),  \]
We call $r$ a \textit{reflector}, and $\mathcal{S}$ a \textit{reflective subcategory} of $\mathcal{C}$. 
\end{defn}

Being a reflective subcategory is a surprisingly strong condition; 

\begin{prop}
(\cite{semiAlgFuncNreflectors}, Section 2, 38-39) Let $r:\mathcal{C} \to \mathcal{S}$ be a reflector for the reflective subcategory $\mathcal{S}$. If $\mathcal{C}$ is cocomplete, then so is $\mathcal{S}$. 
\end{prop}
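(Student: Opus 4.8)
The plan is to exploit the fact that a reflector is by definition a left adjoint, together with the standard principle that left adjoints preserve all colimits that exist in the source category. Concretely, let $J$ be a small category and $D \colon J \to \mathcal{S}$ a diagram. Composing with the inclusion gives $\iota D \colon J \to \mathcal{C}$, and since $\mathcal{C}$ is cocomplete the colimit $C := \colim_{\mathcal{C}} \iota D$ exists, with colimiting cocone $\lambda_j \colon \iota D(j) \to C$. I claim that $r(C)$, equipped with a cocone constructed below, is a colimit of $D$ in $\mathcal{S}$.

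First I would record the one genuinely structural input: because $\mathcal{S}$ is a \emph{full} subcategory, the composite $r \circ \iota$ is naturally isomorphic to the identity functor on $\mathcal{S}$. Indeed, for $B', B \in \text{Ob}(\mathcal{S})$ the defining adjunction bijection together with the full faithfulness of $\iota$ give
\[ \text{Hom}_{\mathcal{S}}(r\iota B', B) \;\cong\; \text{Hom}_{\mathcal{C}}(\iota B', \iota B) \;\cong\; \text{Hom}_{\mathcal{S}}(B', B), \]
naturally in $B$, so Yoneda yields $r\iota B' \cong B'$, naturally in $B'$. Applying $r$ to the cocone $(\lambda_j)$ and composing with these isomorphisms produces a cocone $\mu_j \colon D(j) \to r(C)$ over $D$ in $\mathcal{S}$.

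Next I would verify the universal property of $r(C)$ directly. Let $B \in \text{Ob}(\mathcal{S})$ and let $\nu_j \colon D(j) \to B$ be any cocone over $D$ in $\mathcal{S}$. Full faithfulness of $\iota$ turns this into a cocone $\iota \nu_j \colon \iota D(j) \to \iota B$ over $\iota D$ in $\mathcal{C}$, so the universal property of $C = \colim_{\mathcal{C}} \iota D$ supplies a unique $g \colon C \to \iota B$ with $g \lambda_j = \iota \nu_j$ for all $j$. Transporting $g$ across the adjunction bijection $\text{Hom}_{\mathcal{C}}(C, \iota B) \cong \text{Hom}_{\mathcal{S}}(r(C), B)$ gives a morphism $f \colon r(C) \to B$, and chasing the naturality squares of the adjunction together with the triangle identities shows $f \mu_j = \nu_j$ for all $j$; uniqueness of $f$ follows since the adjunction bijection is injective and $g$ was unique. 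Hence $r(C) = \colim_{\mathcal{S}} D$, and as $D$ was an arbitrary small diagram, $\mathcal{S}$ is cocomplete.

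I do not expect a serious obstacle here: the argument is essentially the proof that left adjoints preserve colimits, specialized to the reflection $r$ and combined with $r\iota \cong \mathrm{id}_{\mathcal{S}}$. The only points requiring care are (i) the identification $r\iota \cong \mathrm{id}_{\mathcal{S}}$ — which is exactly where the \emph{fullness} of $\mathcal{S}$ in $\mathcal{C}$ is used — and (ii) keeping the naturality bookkeeping straight when transporting the comparison map $g$ through the hom-set bijection, so that the resulting $f$ is genuinely compatible with $(\mu_j)$. One could instead phrase the whole proof as ``$r$ preserves the colimit $C$, so $r(C) = \colim_{\mathcal{S}} r\iota D \cong \colim_{\mathcal{S}} D$,'' citing ``left adjoints preserve colimits'' as a black box; I would keep the short direct verification above so that the section remains self-contained.
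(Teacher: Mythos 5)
Your proposal is correct and follows essentially the same route as the paper, whose entire proof is the one-line observation that left adjoints preserve colimits; you have simply unpacked that black box, including the use of fullness to get $r\iota \cong \mathrm{id}_{\mathcal{S}}$ and the direct check of the universal property. Nothing in your verification is off, so the extra detail is fine but not a different argument.
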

\begin{proof}
Left adjoints preserve colimits.
\end{proof}

We cannot, on the other hand, push limits from $\mathcal{C}$ down to limits of $\mathcal{S}$, except for in special cases.

\begin{prop}
(\cite{semiAlgFuncNreflectors},38-39) Let $r:\mathcal{C} \to \mathcal{S}$ is a reflector such that for each object $A$ in $\mathcal{C}$, each reflection $r_{A}:A \to r(A)$ is an epimorphism. If $\mathcal{C}$ is complete, then so is $\mathcal{S}$.  
\end{prop}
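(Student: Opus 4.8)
The claim is the standard fact that a full reflective subcategory of a complete category is again complete; the extra hypothesis that every reflection $r_A : A \to r(A)$ is an epimorphism is exactly what will let me run the argument ``by hand''. The plan is to show that $\mathcal{S}$ is closed under limits computed in $\mathcal{C}$: given a small diagram $D : J \to \mathcal{S}$, form the limit $L := \lim_{\mathcal{C}}(\iota D)$ in $\mathcal{C}$ (which exists since $\mathcal{C}$ is complete), prove that $r_L : L \to r(L)$ is an isomorphism so that $L$ itself lies in $\mathcal{S}$, and then verify that $L$ with its limiting cone is a limit of $D$ in $\mathcal{S}$. Since $J$ ranges over all small index categories, this yields completeness of $\mathcal{S}$. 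Throughout I suppress the inclusion $\iota$ from the notation, as the paper does.

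Here are the steps in order. Let $(\lambda_j : L \to D_j)_{j \in J}$ be the limiting cone in $\mathcal{C}$. Since each $D_j$ lies in $\mathcal{S}$, the universal property of the reflection factors each $\lambda_j$ uniquely through $r_L$, say $\lambda_j = \mu_j \circ r_L$ with $\mu_j : r(L) \to D_j$ a morphism of $\mathcal{S}$. \textbf{The key step} is that $(\mu_j)_{j}$ is a cone over $D$: for a morphism $f : j \to j'$ of $J$ we get $(D_f \circ \mu_j)\circ r_L = D_f \circ \lambda_j = \lambda_{j'} = \mu_{j'}\circ r_L$, and because $r_L$ is an epimorphism this forces $D_f \circ \mu_j = \mu_{j'}$. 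Then by the universal property of $L = \lim_{\mathcal{C}}(\iota D)$, the cone $(\mu_j)_j$ induces a unique morphism $u : r(L) \to L$ in $\mathcal{C}$ with $\lambda_j \circ u = \mu_j$ for all $j$.

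It remains to see that $u$ and $r_L$ are mutually inverse. From $\lambda_j \circ (u \circ r_L) = \mu_j \circ r_L = \lambda_j$ for every $j$, uniqueness in the universal property of $L$ gives $u \circ r_L = \mathrm{id}_L$. For the other composite, $(r_L \circ u)\circ r_L = r_L \circ \mathrm{id}_L = r_L$; since $\mathcal{S}$ is full, $r_L \circ u$ is a morphism of $\mathcal{S}$, and applying uniqueness in the universal property of the reflection $r_L$ (to the morphism $r_L : L \to r(L)$ itself) shows $r_L \circ u = \mathrm{id}_{r(L)}$. Hence $L \cong r(L)$ lies in $\mathcal{S}$. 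Finally, to see that $L$ is a limit of $D$ in $\mathcal{S}$: any cone $(c_j : B \to D_j)_j$ over $D$ in $\mathcal{S}$ is in particular a cone in $\mathcal{C}$, so it factors uniquely through $L$ by some $t : B \to L$ in $\mathcal{C}$; since $\mathcal{S}$ is full, $t$ is a morphism of $\mathcal{S}$, and it is visibly the unique such factorization. Therefore $\mathcal{S}$ has all small limits.

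The only place genuine work happens is the cone claim $D_f \circ \mu_j = \mu_{j'}$, and that is precisely where the epimorphism hypothesis is consumed; everything else is bookkeeping with the two universal properties (of the $\mathcal{C}$-limit and of the reflection) together with fullness of $\mathcal{S}$. Without the epimorphism assumption the statement is still true, but one must instead deduce that $(\mu_j)_j$ is a cone from naturality of the adjunction unit rather than by cancelling $r_L$; the hypothesis is exactly what keeps this proof elementary and self-contained.
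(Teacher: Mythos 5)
Your proof is correct and follows essentially the same route as the paper: form the limit $L$ in $\mathcal{C}$, use the epimorphism property of the unit $r_L$ to transfer the limiting cone to $r(L)$, and then use the two universal properties to conclude that $r_L$ is an isomorphism, so the limit lies in $\mathcal{S}$. You are somewhat more explicit than the paper in checking both composites $u \circ r_L$ and $r_L \circ u$ and in verifying the limiting property within $\mathcal{S}$, and your closing remark that the epimorphism hypothesis can be bypassed via uniqueness of factorization through the unit is accurate, but these are refinements of the same argument rather than a different approach.
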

\begin{proof}
Let $\mathcal{F}:J \to \mathcal{C}$ be a diagram with small index category $J$. Since $\mathcal{C}$ is complete,
\[ \lim_{j \in J} \mathcal{F}(j)  \]
is in $\mathcal{C}$. Composing $r$ with $\mathcal{F}$ gives a diagram $r\mathcal{F}:J \to \mathcal{S}$ in $\mathcal{S}$, and $\iota \circ r\mathcal{F}:J \to \mathcal{C}$ also has a limit 
\[  \lim_{j \in J}r\mathcal{F}(j)   \]
in $\mathcal{C}$. We will show that this limit lies in $\mathcal{S}$.
\par Showing 
\[  \lim_{j \in J}r\mathcal{F}(j) \in \mathcal{S}  \]
is equivalent to showing that the map
\[ \hat{r}:\lim_{j \in J}r\mathcal{F}(j) \to r\left(\lim_{j \in J}r\mathcal{F}(j)\right)  \]
is an isomorphism. For ease, let \[ L:=\lim_{j \in J}r\mathcal{F}(j).   \]
For each $j \in J$, we have a diagram
\begin{center}
\begin{tikzcd}
 L \ar[d] \ar[rd,"\hat{r}"] &  \\ r(\mathcal{F}(j))  &   rL,
\end{tikzcd}
\end{center}
and since $r$ is an adjoint to the inclusion functor, we get a unique map
\begin{center}
\begin{tikzcd}
 L \ar[d] \ar[rd,"\hat{r}"] &  \\ r(\mathcal{F}(j))  &   rL \ar[l,dotted,"\exists !"]
\end{tikzcd}
\end{center}
making the triangle commute. If $i \to j$ is a map in $J$, the universal property of the limit $L$ guarantees that 
\begin{center}
\begin{tikzcd}
 L \ar[d] \ar[rd] &   \\  r(\mathcal{F}(i)) \ar[r] &  r(\mathcal{F}(j))  
\end{tikzcd}
\end{center}
commutes. We also have a diagram
\begin{center}
\begin{tikzcd}
 rL \ar[d] \ar[rd] &   \\  r(\mathcal{F}(i)) \ar[r] &  r(\mathcal{F}(j)),  
\end{tikzcd}
\end{center}
but since we only know $rL$ maps to $r(\mathcal{F}(i))$ for all $i \in I$, and not that it respects the maps in the diagram $r\mathcal{F}$, we need to demonstrate this fact. The diagram 
\begin{center}
\begin{tikzcd}
L \ar[r,"\hat{r}"] & rL \ar[d] \ar[rd] &   \\ & r(\mathcal{F}(i)) \ar[r] &  r(\mathcal{F}(j))  
\end{tikzcd}
\end{center}
commutes, and since $\hat{r}$ is an epimorphism, the original triangle 
\begin{center}
\begin{tikzcd}
 rL \ar[d] \ar[rd] &   \\  r(\mathcal{F}(i)) \ar[r] &  r(\mathcal{F}(j)) 
\end{tikzcd}
\end{center}
commutes. We have now shown $rL$ maps to the limit $L$ of the diagram $r\mathcal{F}$, and by the universal property of the limit, this map is an isomorphism and $L$ is in $\mathcal{S}$.
\end{proof}

We fix a notation for reflectors $r:\mathcal{C} \to \mathcal{S}$ as in Proposition 32, and generalize the idea below:

\begin{defn}
A reflector $r:\mathcal{C} \to \mathcal{S}$ is called an epireflector (respectively monoreflector) if $r_A:A \to rA$ is an epimorphism (respectively monomorphism) for all $A \in \text{Ob}(\mathcal{C})$.
\end{defn}

\begin{prop}
All monoreflectors are epireflectors.
\end{prop}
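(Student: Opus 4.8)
The plan is to observe that a reflection morphism is automatically an epimorphism ``with respect to targets in $\mathcal{S}$'' --- this is nothing but the defining bijection of the adjunction --- and then to bootstrap this to a genuine epimorphism in $\mathcal{C}$ using the monomorphisms $r_Y$ supplied by the monoreflector hypothesis. Fix $A \in \text{Ob}(\mathcal{C})$; the goal is to show that $r_A \colon A \to r(A)$ is an epimorphism. So I would suppose $u, v \colon r(A) \to Y$ are morphisms of $\mathcal{C}$ with $u \circ r_A = v \circ r_A$, and aim to conclude $u = v$.

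The first step is to record the following consequence of $r$ being a reflector: for every object $B$ of $\mathcal{S}$ and every pair of morphisms $f, g \colon r(A) \to B$ in $\mathcal{C}$, if $f \circ r_A = g \circ r_A$ then $f = g$. Indeed, the defining natural bijection $\text{Hom}_{\mathcal{S}}(r(A), B) \xrightarrow{\sim} \text{Hom}_{\mathcal{C}}(A, \iota(B))$ is concretely $h \mapsto \iota(h) \circ r_A$, so precomposition with $r_A$ is injective; and fullness of $\mathcal{S}$ is what lets us regard the $\mathcal{C}$-morphisms $f$ and $g$ as elements of $\text{Hom}_{\mathcal{S}}(r(A),B)$ in the first place. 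This step uses nothing about $u$, $v$ and holds for every reflector.

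Next I would apply this with $B = r(Y)$, which lies in $\mathcal{S}$. Postcomposing the identity $u \circ r_A = v \circ r_A$ with the reflection $r_Y \colon Y \to r(Y)$ yields $r_Y \circ u \circ r_A = r_Y \circ v \circ r_A$. Since $r_Y \circ u$ and $r_Y \circ v$ are $\mathcal{C}$-morphisms from $r(A)$ to the object $r(Y) \in \text{Ob}(\mathcal{S})$, the first step gives $r_Y \circ u = r_Y \circ v$. By hypothesis $r$ is a monoreflector, so $r_Y$ is a monomorphism; cancelling $r_Y$ gives $u = v$. Hence $r_A$ is an epimorphism, i.e.\ $r$ is an epireflector.

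I do not expect a genuine obstacle: the argument is purely formal diagram chasing. The only point meriting care is the first step --- recognizing that the defining bijection of a reflector is realized concretely by precomposition with the reflection $r_A$, so that ``$r_A$ is epic onto $\mathcal{S}$'' is essentially a tautology, and noting that fullness of $\mathcal{S}$ is exactly what permits the a priori $\mathcal{C}$-morphisms $r_Y \circ u$ and $r_Y \circ v$ to be fed into that bijection. Everything else is routine.
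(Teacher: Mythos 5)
Your proof is correct and follows essentially the same route as the paper: postcompose the two morphisms with the reflection $r_Y$ of the codomain (landing in $\mathcal{S}$), use the universal property of the reflection $r_A$ (precomposition with $r_A$ is injective for targets in $\mathcal{S}$) to conclude the postcomposites agree, and then cancel the monomorphism $r_Y$. The only difference is presentational — you make the hom-set bijection and fullness explicit, while the paper phrases the same uniqueness via the naturality square and $r(\varphi_i \circ r_C)$.
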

\begin{proof}
Let $r:\mathcal{C} \to \mathcal{S}$ be a monoreflector, and let $\varphi_1,\varphi_2:rC \to D$ be $\mathcal{C}$-morphisms such that $\varphi_1 \circ r_C=\varphi_2 \circ r_C$. Consider the diagram
\begin{center}
\begin{tikzcd}
 C \ar[r,"\varphi_{i} \circ r_C"] \ar[d,"r_C"] & D \ar[d,"r_D"] \\ rC \ar[r] & rD.   
\end{tikzcd}
\end{center}
The morphism $rC \to rD$ is $r_{D} \circ \varphi_{i}$ by definition, but equals $r(\varphi_{i}\circ r_{C})$ since $r$ is a reflector. Hence 
\[ r_{D} \circ \varphi_{1}=r_{D} \circ \varphi_{2}, \]
and since $r_D$ is a monomorphism, $\varphi_1=\varphi_2$.
\end{proof}

Let $\redPo$ be the category of reduced partially ordered rings with ring homomorphisms. Let $\textbf{WRR}$ be the full subcategory $\cRing$ whose objects are rings $A$ with the following property; 
\[ \forall a_1, \hdots ,a_n \in A: \;\left(\sum_{i=1}^{n}a_{i}^{2}=0\right) \implies (a_{i}^{2}=0 \; \forall i) .   \]
 We call these rings \textit{weakly real} (\cite{semiAlgFuncNreflectors},22-23). Using $\sper(-)$, we may construct a functor 
\[  \mathscr{F}:\textbf{WRR} \to \redPo  \]
as follows; given a weakly real ring $A$, the cone 
\[ P_{w}(A):=\sum A^{2} \]
guarantees that $\sper(A) \neq \emptyset$. So, the product of the canonical morphisms $\rho_\alpha$ gives a morphism
\[  A \to \prod_{\alpha \in \sper(A)}\rho(\alpha)  \]
of partially ordered rings. Moving forward, we use the notation $\mathscr{F}(A)=\prod A$ for 
\[ \prod_{\alpha \in \sper(A)}\rho(\alpha). \]
If $f:A \to B$ is a ring homomorphism, I claim we obtain a commutative diagram 
\begin{center}
\begin{tikzcd}
A \ar[r,"f"] \ar[d] & B \ar[d] \\ \prod A \ar[r,"\prod (f)"] & \prod B
\end{tikzcd}
\end{center}
in rings. 

We must first define the morphism $\mathscr{F}(f)=\prod(f)$. Let \[ \prod(f)((a_{\alpha})_{\alpha \in \sper A})(\beta)=\iota_{f^{-1}(\beta)}(a_{f^{-1}(\beta)}). \]
$\prod(f)$ is a well-defined map of sets because it is given in terms of set maps. If 
\[  (a_{\alpha})_{\alpha \in \sper A},(b_{\alpha})_{\alpha \in \sper A} \in \prod A,  \]
we have 
\begin{align*}
& \prod(f)((a_{\alpha}+b_{\alpha})_{\alpha \in \sper A})(\beta) \\=&\iota_{f^{-1}(\beta)}(a_{f^{-1}(\beta)}+b_{f^{-1}(\beta)}) \\=& \iota_{f^{-1}(\beta)}(a_{f^{-1}(\beta)})+\iota_{f^{-1}(\beta)}(b_{f^{-1}(\beta)}) \\=& \prod(f)(a_{\alpha})(\beta)+\prod(f)(b_{\alpha})(\beta).
\end{align*}
A similar computation for multiplication shows that $\prod(f)$ is a ring homomorphism.

Now, to show 
\begin{center}
\begin{tikzcd}
A \ar[r,"f"] \ar[d] & B \ar[d] \\ \prod A \ar[r,"\prod (f)"] & \prod B
\end{tikzcd}
\end{center}
commutes, fix $a \in A$. Checking commutativity is equivalent to checking that 
\[  \iota_{f^{-1}(\beta)}(a(f^{-1}(\beta)))=f(a)(\beta).  \]
Restricting our focus to the point $\beta$, the diagram
\begin{center}
\begin{tikzcd}
A \ar[d,"f" swap] \ar[r,"\rho_{f^{-1}(\beta)}"] & \rho(f^{-1}(\beta)) & \\ B \ar[r,"\rho_{\beta}" ] & \rho(\beta) & 
\end{tikzcd}
\end{center}
factors as 
\begin{center}
\begin{tikzcd}
A \ar[d,"f" swap] \ar[r] & A/\supp(f^{-1}(\beta)) \ar[d] \ar[r] & \rho(f^{-1}(\beta)) \\ B \ar[r] & B/\supp(\beta) \ar[r] & \rho(\beta).
\end{tikzcd}
\end{center}
Both factor rings are integral domains, so we may factor our diagram through their ordered fields of fractions;
\begin{center}
\begin{tikzcd}
A/\supp(f^{-1}(\beta)) \ar[d] \ar[r] & \kappa(f^{-1}(\beta)) \ar[d] \ar[r] & \rho(f^{-1}(\beta)) \\ B/\supp(\beta) \ar[r] & \kappa(\beta) \ar[r] & \rho(\beta).
\end{tikzcd}
\end{center}
The universal property of real closed fields gives a unique map $\rho(f^{-1}(\beta)) \to \rho(\beta)$ such that 
\begin{center}
\begin{tikzcd}
A/\supp(f^{-1}(\beta)) \ar[d] \ar[r] & \kappa(f^{-1}(\beta)) \ar[d] \ar[r] & \rho(f^{-1}(\beta)) \ar[d,dotted] \\ B/\supp(\beta) \ar[r] & \kappa(\beta) \ar[r] & \rho(\beta)
\end{tikzcd}
\end{center}
commutes, and so $\iota_{f^{-1}(\beta)}$ must be that unique map!

Subrings of $\prod A$, the total ring of functions on $A$, play a crucial role in establishing the real closure operation as a reflector. We note that $\prod(-)$ is not a reflector, as it is not generally idempotent (\cite{semiAlgFuncNreflectors}, 55-56).

Let $\R_{alg}$ be the field of real algebraic numbers, and let $\rho$ be any real closed field. Since $\Z$ is initial in the category of commutatve rings, there is a unique morphism $\Z \to \rho$. Since $\rho$ is real closed, we have a factorization 
\begin{center}
\begin{tikzcd}
\Z \ar[r] \ar[rd] & \rho \\ & \Gamma(\sper(\Z)) .\ar[u,"\exists !" dotted, swap]
\end{tikzcd}
\end{center}
Now, $\sper(\F_p)=\emptyset$ for characteristic reasons, so 
\[ \sper(\Z)=\bigcup_{p \text{ prime}}\emptyset \cup\sper(\Q)=\sper(\Q).  \]
Since an ordered field structure on $\Q$ must come from an ordering of $\R$, the real closure of $\Q$ is $\R_{alg}$. So every real closed field $\rho$ is a $\R_{alg}$ algebra. We fittingly denote $\R_{alg}=\R_{0}$ moving forward.

Let $\sigma(n)$ denote the ring of semialgebraic functions in $n$ variables. Given a semialgebraic function $\varphi:\R_{0}^{n} \to \R_0$, we may think of $\varphi$ as an $n$-ary operation on $\R_{0}$. Furthermore, if $g_1,\hdots,g_n:\R_{0}^{k} \to \R_{0}$ are semialgebraic, the operation
\[ \omega_{\varphi}:\sigma(k)^{n} \to \sigma(k) \]
which maps $(g_1,\hdots,g_n) \mapsto \varphi(g_1,\hdots,g_n)$
is an $n$-ary operation on $\sigma(k)$ for every natural $k$. We also know a set map $\varphi:\R_{0}^{n} \to \R_{0}$ is semialgebraic if and only if $\varphi(g_1,\hdots,g_n)$ is semialgebraic for all $n$-tuples of semialgebraic functions $(g_1,\hdots,g_n)$.  Hence semialgebraic functions correspond bijectively to $n$-ary operations on semialgebraic functions.

Given a semialgebraic function $\varphi:\R_{0}^{n}\to \R_{0}$ and any total ring of functions $\prod A$, we obtain an $n$-ary operation on $\prod A$ as follows; since 
\[ \prod A=\prod_{\alpha \in \sper(A)}\rho(\alpha), \]
and $\rho(\alpha)$ is a $\R_0$-algebra, we may form the tensor product
\[ \varphi \otimes_{\R_0} 1_{A}:\R_{0}^{n} \otimes \prod A \to \R_{0} \otimes \prod A. \]
The multiplication operation in $\prod A$ allows us to write the map as 
\[  \varphi_{A}:\left(\prod A\right)^{n}   \to \prod A, \]
so $\varphi$ also gives an $n$-ary operation on total rings of functions. 

We have a notion of '$n$-ary operations on $A$' (\cite{semiAlgFuncNreflectors},65) which does not align with the usual use of those words. Let 
\[  \Delta_{A}:A \to \prod A   \]
be the canonical map from $A$ to its total ring of functions. Let $\omega$ be an $n$-ary operation on $\prod A$. The composition 
\begin{center}
\begin{tikzcd}
 A^{n}  \ar[r,"\Delta_{A}^{\times n}"] & \left(\prod_{\alpha \in \sper A}\rho(\alpha)\right)^{n} \ar[r,"\omega"] & \prod_{\alpha \in \sper A}\rho(\alpha)
\end{tikzcd}
\end{center}
is a map of $A$-algebras such that 
\[  \omega(\Delta_{A}^{\times n}(a_1,\hdots,a_n))(\beta)=\omega(a_1(\beta),\hdots,a_n(\beta)). \]

\begin{defn}
Let $A$ be a weakly real ring. An element $f \in \prod A$ is a \textit{semialgebraic function on $A$} if there exists a natural $n$, $\{ a_1,a_2,\hdots ,a_n \} \subseteq A$ and an $n$-ary operation $\omega$ such that 
\[  f(\alpha)=\omega(\Delta_{A}^{\times n}(a_1,\hdots,a_n))(\alpha)  \]
for all $\alpha \in \sper A$.
\end{defn}

We denote the semialgebraic functions on A by $\semf(A)$. $n$-ary operations are well-behaved under ring homomorphisms; 

\begin{prop}
(\cite{semiAlgFuncNreflectors},66) If $f:A \to B$ is a ring homomorphism of weakly real rings, and $\omega$ is an $n$-ary operation, then
\begin{center}
\begin{tikzcd}
A^n \ar[r,"f^n"] \ar[d,"\omega" swap] & B^n \ar[d,"\omega"] \\ \prod A \ar[r,"\prod(f)"] & \prod B
\end{tikzcd}
\end{center}
commutes in \underline{\textbf{Sets}}.
\end{prop}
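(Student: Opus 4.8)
The plan is to check commutativity of the square componentwise over $\sper B$. An element of $\prod B$ is a tuple $(b_\beta)_{\beta\in\sper B}$ with $b_\beta\in\rho(\beta)$, and two maps of sets into $\prod B$ agree iff their $\beta$-components agree for every $\beta\in\sper B$; so I fix $(a_1,\dots,a_n)\in A^n$ and $\beta\in\sper B$ and put $\alpha:=f^{-1}(\beta)\in\sper A$. Here the left vertical arrow is the induced $n$-ary operation on $A$, that is $\omega\circ\Delta_A^{\times n}$, and likewise on the right for $B$; the datum $\omega$ is a fixed semialgebraic function $\varphi:\R_0^{n}\to\R_0$, so in every fiber $\rho(\gamma)$ the operation $\omega$ is interpreted by one and the same $\mathscr{L}_{\leq}(\R_0)$-formula, namely the one defining the graph of $\varphi$ over $\R_0$ (recall every real closed field is canonically an $\R_0$-algebra).

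First I would trace the left-then-bottom composite at $\beta$. By the identity $\omega(\Delta_A^{\times n}(a_1,\dots,a_n))(\alpha)=\omega(a_1(\alpha),\dots,a_n(\alpha))$ recorded a few lines above the statement, the $\alpha$-component of $\omega(\Delta_A^{\times n}(a_1,\dots,a_n))\in\prod A$ is $\omega$ evaluated inside the real closed field $\rho(\alpha)$ on the tuple $a_1(\alpha),\dots,a_n(\alpha)$. Applying $\prod(f)$ and using $\prod(f)\bigl((x_{\gamma})_{\gamma}\bigr)(\beta)=\iota_{f^{-1}(\beta)}(x_{f^{-1}(\beta)})$, the $\beta$-component of the result is $\iota_\alpha\bigl(\omega(a_1(\alpha),\dots,a_n(\alpha))\bigr)$, where $\iota_\alpha=\iota_{f^{-1}(\beta)}:\rho(\alpha)\hookrightarrow\rho(\beta)$ is the canonical morphism of real closed fields constructed earlier (being a homomorphism of real closed fields, it automatically preserves the order and the $\R_0$-structure, hence is an $\mathscr{L}_{\leq}(\R_0)$-embedding). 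The top-then-right composite instead has $\beta$-component $\omega\bigl(f(a_1)(\beta),\dots,f(a_n)(\beta)\bigr)$, the operation $\omega$ evaluated inside $\rho(\beta)$. Invoking the compatibility already proven when checking that $\mathscr{F}$ is a functor — the square with vertices $A,B,\prod A,\prod B$ commutes, i.e. $f(a)(\beta)=\iota_\alpha(a(\alpha))$ for all $a\in A$ — and taking $a=a_i$, the whole problem collapses to the single identity
\[ \iota_\alpha\bigl(\omega(a_1(\alpha),\dots,a_n(\alpha))\bigr)=\omega\bigl(\iota_\alpha(a_1(\alpha)),\dots,\iota_\alpha(a_n(\alpha))\bigr) \]
holding in $\rho(\beta)$.

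This last identity is the real content: a morphism of real closed fields commutes with every semialgebraic operation. Since $\iota_\alpha$ is an $\mathscr{L}_{\leq}(\R_0)$-embedding between models of the theory of real closed fields, the Tarski--Seidenberg theorem — equivalently, model completeness of that theory — makes $\iota_\alpha$ an elementary embedding; because the graph of $\varphi$ is defined by an $\mathscr{L}_{\leq}(\R_0)$-formula and ``$\varphi$ is a well-defined function'' is itself a theorem of the theory, elementarity transfers the statement that $\bigl(a_1(\alpha),\dots,a_n(\alpha),\omega(a_1(\alpha),\dots,a_n(\alpha))\bigr)$ lies on that graph from $\rho(\alpha)$ to $\rho(\beta)$, which is exactly the displayed equation. (Alternatively one can argue hands-on from Tarski's piecewise-algebraic description of semialgebraic functions.) So the genuine obstacle is not the diagram chase but ensuring this model-theoretic input is applied uniformly over $\beta$ — that a single $\mathscr{L}_{\leq}(\R_0)$-formula governs $\omega$ in every fiber $\rho(\alpha)$ — which is precisely what is encoded in the definition of an $n$-ary operation. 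With the displayed identity in hand, letting $\beta$ run over $\sper B$ yields commutativity in $\underline{\textbf{Sets}}$, as required.
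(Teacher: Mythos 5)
Your proof is correct and follows essentially the same route as the paper's: a componentwise computation at each $\beta\in\sper B$ which, after using the previously established compatibility $f(a)(\beta)=\iota_{f^{-1}(\beta)}(a(f^{-1}(\beta)))$, reduces everything to the fact that $\iota_{f^{-1}(\beta)}$ commutes with the $n$-ary operation $\omega$. The only difference is that you explicitly justify that commutation (via elementarity of embeddings of real closed fields applied to the defining formula of $\omega$), a step the paper simply asserts as one line in its chain of equalities.
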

\begin{proof}
Indeed, for each $\beta \in \sper B$ and $a_1,\hdots,a_n \in A$ we have
\begin{align*}
&\prod(f)(\omega(a_1,\hdots,a_n))(\beta) \\=&\iota_{f^{-1}(\beta)}(\omega(a_1,\hdots,a_n)(f^{-1}(\beta))) \\=& \iota_{f^{-1}(\beta)}(\omega_{f^{-1}(\beta)}(a_1(f^{-1}(\beta)),\hdots,a_n(f^{-1}(\beta))) \\=& 
\omega_{f^{-1}(\beta)}(\iota_{f^{-1}(\beta)}(a_1(f^{-1}(\beta))),\hdots,\iota_{f^{-1}(\beta)}(a_n(f^{-1}(\beta)))) \\=& \omega_{\beta}(f(a_1)(\beta),\hdots,f(a_n)(\beta))  \\=&\omega(f(a_1),\hdots,f(a_n))(\beta).
\end{align*}
\end{proof}

In the above proof, we may note that the semialgebraic functions $\omega(a_1,\hdots,a_n)$ are mapped to semialgebraic functions $\omega(f(a_1),\hdots,f(a_n))$ by $\prod(f)$, for any ring homomorphism $f:A \to B$. Hence $f$ gives a map $\semf(f):\semf(A) \to \semf(B)$.

Semialgebraic functions are also preserved under $n$-ary operations;
\begin{prop}
(\cite{semiAlgFuncNreflectors},67) Suppose $A$ is a weakly real ring, $\omega:\left(\prod A \right)^{n} \to \prod A$ is an $n$-ary operation, and $\omega_1,\hdots,\omega_i,\hdots,\omega_n$ are semialgebraic functions on $A$ parameterized by $(a_{i,j})_{1 \leq i \leq n ,\, 1 \leq j \leq r_j}$. Then \[ \omega(\omega_1(a_{1,1},\hdots,a_{1,n_1}),\hdots,\omega_n(a_{n,1},\hdots,a_{n,r_n})) \]
is semialgebraic.
\end{prop}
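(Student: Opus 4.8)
The plan is to collapse the nested composition into a single $N$-ary operation, where $N := r_1 + \cdots + r_n$, applied to the concatenated list of parameters. By the definition of an $n$-ary operation on a total ring of functions, $\omega$ is the operation $\widetilde{\varphi}$ induced by a semialgebraic function $\varphi : \R_0^n \to \R_0$, and each $\omega_i$ is $\widetilde{\varphi_i}\bigl(\Delta_A^{\times r_i}(a_{i,1},\dots,a_{i,r_i})\bigr)$ for a semialgebraic $\varphi_i : \R_0^{r_i} \to \R_0$, with $\widetilde{\varphi_i}$ the associated $r_i$-ary operation. First I would form the set map
\[
\Xi \;:=\; \varphi \circ (\varphi_1 \times \cdots \times \varphi_n) \;:\; \R_0^{N} \longrightarrow \R_0 .
\]
Since, as recorded just before the definition of a semialgebraic function on $A$, a set map $\R_0^{m} \to \R_0$ is semialgebraic exactly when its composite with every tuple of semialgebraic functions is semialgebraic, and the $\varphi_i$ are semialgebraic, $\Xi$ is semialgebraic. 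It therefore determines an $N$-ary operation $\widetilde{\Xi}$ on $\prod A$, so that
\[
h \;:=\; \widetilde{\Xi}\bigl(\Delta_A^{\times N}(a_{1,1},\dots,a_{1,r_1},a_{2,1},\dots,a_{n,r_n})\bigr) \;\in\; \prod A
\]
is, by the definition of a semialgebraic function on $A$, a semialgebraic function on $A$ with parameters $\{a_{i,j}\}$. The remaining work is to identify $h$ with the element $g := \omega\bigl(\omega_1(a_{1,1},\dots,a_{1,r_1}),\dots,\omega_n(a_{n,1},\dots,a_{n,r_n})\bigr)$ of the statement.

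Since both $g$ and $h$ lie in $\prod_{\beta \in \sper A}\rho(\beta)$, it suffices to check $g(\beta) = h(\beta)$ for every $\beta \in \sper A$. Every operation in sight acts coordinatewise, and by the construction of the $n$-ary operation attached to a semialgebraic function (the tensor $\varphi \otimes_{\R_0} 1_A$ followed by the multiplication of $\prod A$), its $\beta$-coordinate is the interpretation of the underlying semialgebraic function in the real closed $\R_0$-algebra $\rho(\beta)$. Using the identity $\omega(\Delta_A^{\times n}(a_1,\dots,a_n))(\beta)=\omega(a_1(\beta),\dots,a_n(\beta))$ noted when $n$-ary operations on $A$ were introduced, and writing $\psi^{\rho(\beta)}$ for the interpretation of a semialgebraic $\psi$ in $\rho(\beta)$ and $a_{i,\bullet}(\beta)$ for $(a_{i,1}(\beta),\dots,a_{i,r_i}(\beta))$, one gets
\[
g(\beta) = \varphi^{\rho(\beta)}\bigl(\varphi_1^{\rho(\beta)}(a_{1,\bullet}(\beta)),\dots,\varphi_n^{\rho(\beta)}(a_{n,\bullet}(\beta))\bigr), \qquad h(\beta) = \Xi^{\rho(\beta)}\bigl(a_{1,1}(\beta),\dots,a_{n,r_n}(\beta)\bigr).
\]
Thus the whole proposition reduces to the assertion that interpretation in $\rho(\beta)$ commutes with composition: $\Xi^{\rho(\beta)} = \varphi^{\rho(\beta)} \circ (\varphi_1^{\rho(\beta)} \times \cdots \times \varphi_n^{\rho(\beta)})$.

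The hard part is exactly this commutation, and it is where model theory enters. A semialgebraic function $\psi : \R_0^{k} \to \R_0$ is cut out by an $\mathscr{L}_{\leq}$-formula $\theta_\psi(\bar x, y)$ for which $\R_0 \models \forall \bar x\,\exists! y\; \theta_\psi(\bar x, y)$; since $\R_0 \hookrightarrow \rho(\beta)$ is an extension of real closed fields, hence an elementary embedding by model completeness of $\mathrm{Th}$ (Tarski--Seidenberg), the same formula defines a total single-valued function in $\rho(\beta)$, namely $\psi^{\rho(\beta)}$. Composition of semialgebraic functions is itself witnessed by an $\mathscr{L}_{\leq}$-formula obtained by existentially quantifying out the intermediate values, so the identical transfer gives $\Xi^{\rho(\beta)} = \varphi^{\rho(\beta)} \circ (\varphi_1^{\rho(\beta)} \times \cdots \times \varphi_n^{\rho(\beta)})$. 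I would isolate this as a short lemma --- for every real closed field $R \supseteq \R_0$ the assignment $\psi \mapsto \psi^{R}$ respects composition and agrees coordinatewise with the operation induced on $\prod A$ --- after which the pointwise check above is immediate; the only leftover is the routine bookkeeping that $\widetilde{\Xi}\circ \Delta_A^{\times N}$ feeds $a_{i,j}(\beta)$ into the $j$-th slot of $\varphi_i$, which is exactly the indexing built into the definition of $\Xi$.
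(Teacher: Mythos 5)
Your proposal is correct and takes essentially the same route as the paper: the heart of both arguments is that the composite is cut out by the first-order formula obtained by conjoining the graph formulas of $\omega$ and the $\omega_i$ and existentially quantifying out the intermediate values. Your additional packaging --- forming the single semialgebraic function $\Xi$ on $\R_{0}^{N}$ and checking the pointwise identification in each $\rho(\beta)$ via the Tarski--Seidenberg transfer --- only makes explicit the interpretation step that the paper leaves implicit.
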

\begin{proof}
Let $\Phi(X_1,\hdots,X_n,Y)$ be a first-order formula describing the graph of $\omega$, and let $\Phi_{i}(X_{i,1},\hdots,X_{i,n_i},X_i)$ be a first order sentence describing the graph of $\omega_i$. Let $\Psi(X_{1,1},\hdots,X_{n,r_n},Y)$ be the sentence
\[  \exists X_{1,1},\hdots,X_{n,r_n}:\left(\bigwedge_{i=1}^{n} \Phi_{i}(X_{i,1},\hdots,X_{i,n_i},X_i)\right) \wedge \Phi(X_1,\hdots,X_n,Y). \]
The sentence $\Psi$ is a first order sentence which also describes the graph of a semialgebraic function; in fact, it describes
\[ \omega(\omega_1(a_{1,1},\hdots,a_{1,n_1}),\hdots,\omega_n(a_{n,1},\hdots,a_{n,r_n})), \]
concluding this proof
\end{proof}

In fact, the $\text{semf}(\cdot)$ operation is idempotent, i.e. the natural map $\semf(A) \to \semf(\semf(A))$ is an isomorphism (\cite{semiAlgFuncNreflectors},75). More strongly, it can be shown that $\semf:\redPo \to \textbf{\underline{semf}}$ is a monoreflector. The fact that $A \to \semf(A)$ is a monomorphism follows from the factorization
\begin{center}
\begin{tikzcd}
A \ar[r,hook] \ar[rd] & \prod A \\ & \semf(A) \ar[u,hook],
\end{tikzcd}
\end{center}
where the top morphism is the canonical map from $A$ to its total ring of functions $\prod A$. To show $\semf(\cdot)$ is a left adjoint of the forgetful functor, by (\cite{semiAlgFuncNreflectors},71-72), if $A$ is a reduced weakly real ring, the map
\[ A \to \semf(A) \]
is an epimorphism in $\redPo$.

\begin{thm}
(\cite{semiAlgFuncNreflectors},89) Semialgebraic functions are the smallest monoreflective subcategory of $\redPo$.    
\end{thm}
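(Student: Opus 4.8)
The plan is to prove the equivalent statement that $\textbf{\underline{semf}}$ is contained in every full, replete, monoreflective subcategory $\mathcal{S}$ of $\redPo$; since $\semf$ is itself a monoreflector, this is exactly what it means for $\textbf{\underline{semf}}$ to be the smallest one. So I fix such an $\mathcal{S}$, with reflector $r$ and reflection maps $r_B \colon B \to rB$. By assumption each $r_B$ is a monomorphism in $\redPo$, and by the Proposition that all monoreflectors are epireflectors, each $r_B$ is also an epimorphism; hence every reflection map of $r$ is a bimorphism. It then suffices to show $\semf(A) \in \mathcal{S}$ for an arbitrary reduced partially ordered ring $A$, which, $\mathcal{S}$ being replete, is the same as showing that the reflection
\[ \eta \colon \semf(A) \longrightarrow r(\semf(A)) \]
is an isomorphism; note that $\eta$ is both mono and epi.

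The one genuinely non-formal ingredient I would need is that $\semf(A)$ is \emph{epicomplete} in $\redPo$: every epimorphism in $\redPo$ out of $\semf(A)$ is an isomorphism. Equivalently, the canonical map $A \to \semf(A)$ realizes $\semf(A)$ as the epimorphic hull of $A$. I would obtain this from the concrete description of $\semf(A)$ as a ring of semialgebraically-defined, compatible sections of $\pi \colon \sper A[t] \to \sper A$: starting from an epimorphism $g \colon \semf(A) \to D$ in $\redPo$, one transports the situation into total rings of functions, uses that $g$ being epi forces each element of $D$ to be a semialgebraic function of elements of the image of $\semf(A)$, and then invokes the uniqueness of such sections to conclude that $D$ is already $\semf(A)$. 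Concretely, one shows the more general fact that any epimorphic extension $C \hookrightarrow D$ of a reduced partially ordered ring $C$ embeds over $C$ into $\semf(C)$, so that when $C$ is semf-closed one gets $D = C$. This is the step where the real-algebraic content of the theorem resides, and it is the main obstacle; I would isolate it as a lemma and prove (or carefully cite) it first.

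With epicompleteness in hand the theorem follows in three lines: $\eta \colon \semf(A) \to r(\semf(A))$ is an epimorphism with domain the epicomplete ring $\semf(A)$, hence an isomorphism, so $\semf(A) \cong r(\semf(A)) \in \mathcal{S}$. As $A$ was arbitrary, $\textbf{\underline{semf}} \subseteq \mathcal{S}$, and since $\textbf{\underline{semf}}$ is itself monoreflective, it is the smallest monoreflective subcategory of $\redPo$.

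A word on what must \emph{not} be used: it is not enough, and not true, that semf-closed rings have no proper injective extensions, nor that $\redPo$ is balanced (refining the positive cone on a polynomial ring gives a non-invertible bimorphism). What makes the argument go through is precisely the stronger assertion that $\semf(A)$ admits no proper epimorphic extension at all, so the proof hinges on the epimorphic-hull description of $\semf$ rather than on any soft category theory.
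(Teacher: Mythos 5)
Your proposal is correct and follows essentially the same route as the paper: both arguments come down to the facts that the unit $\semf(A)\to r(\semf(A))$ of any monoreflector is simultaneously monic and epic, and that $\semf(A)$ is epicomplete in $\redPo$ (the paper isolates this as a lemma about epicomplete objects and cites Corollary 7.20 of the Schwartz--Madden reference for $\textbf{\underline{semf}}$ being exactly the epicomplete objects, which is the same external input you propose to prove or cite via the epimorphic-hull description). One small caution: epicompleteness should be phrased as ``every morphism out of $\semf(A)$ that is both epic and monic is an isomorphism'' rather than ``every epimorphism is an isomorphism'' (surjective quotients are epimorphisms in $\redPo$), but since you only ever apply it to the unit, which you have already shown is a bimorphism, your argument is unaffected.
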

\begin{proof}
To prove this theorem, we will first prove a key lemma;
\begin{lemma}
If $r:\mathscr{C} \to \mathscr{D}$ is a fixed monoreflector and $E$ is an epicomplete object, then 
\[ r_{E}:E \to rE \]
is an isomorphism.
\end{lemma}
\begin{proof}
Suppose $r_{E}:E \to rE$ is an epimorphism and a monomorphism, with $E$ an object of $\mathscr{D}$. Since $r$ is a left adjoint to the forgetful functor, we have a morphism
\[   \eta_{E}:rE \to E \]
corresponding to the identity morphism $E \to E$ via adjunction. Consider the sequence of morphisms
\begin{center}
\begin{tikzcd}
E \ar[r,"r_E"] & rE \ar[r,"\eta_E"] & E.
\end{tikzcd}
\end{center}
We note that $r_E$ is induced by the identity, and since $r_E$ is an epimorphism, any other morphism $rE \to E$ must be induced by the identity. Hence 
\[  \eta_{E} \circ r_{E}=\text{id}_{E}.  \]
Now for $rE$, we have a corresponding diagram
\begin{center}
\begin{tikzcd}
rE \ar[r,"\eta_E"] & E \ar[r,"r_E"] & rE.
\end{tikzcd}
\end{center}
The correspondence of morphisms under adjunction is bijective, and since $r_E$ corresponds to \emph{both} $\text{id}_{rE}$ and $r_E \circ \eta_E$,
\[ \text{id}_{rE}=r_E \circ \eta_E  \]
and $r_E$ is an isomorphism.
\end{proof}
Hence every monoreflective subcategory of $\redPo$ must contain the epicomplete objects in $\redPo$. By (\cite{semiAlgFuncNreflectors},76), Corollary 7.20, $\textbf{\underline{semf}}$ equals the epicomplete objects in $\redPo$, proving the theorem we set out to show.
\end{proof}

Semialgebraic functions form a full subcategory of $\redPo$, and if we restrict further to continuous semialgebraic functions, we get another full subcategory of $\redPo$. We can show that both of these subcategories are reflective subcategories of $\redPo$. The easiest way to give the reflector is by first introducing the definition of \textit{H-closed} reflectors, which allows us define reflectors solely by specifying what they do on the reduced partially ordered rings $\Z[x_1,\hdots,x_n]$. We will conclude by showing that $\Z[x_1,\hdots,x_n]$ maps via epimorphism to its real closure, meaning the real closure is an H-closed monoreflector of $\redPo$.

\begin{defn}
A full isomorphism closed subcategory $\mathscr{D}$ of a concrete category $ \mathscr{C}$ is \textit{H-closed} if for all $X \in \text{ob}(\mathscr{D})$ and for all $Y \in \text{ob}(\mathscr{C})$ with a surjection $X \twoheadrightarrow Y$, $Y \in \text{ob}(\mathscr{D})$.
\end{defn}

Some examples and non-examples of H-closed reflectors are given on pages 119-123 of \cite{semiAlgFuncNreflectors}, though we focus on the real closure reflector moving forward. Let
\[ \rho:\redPo \to \rcr  \]
be the functor between full subcategories of \cRing which takes a reduced partially ordered ring (with non-empty real spectrum) $A$ to the ring of continuous semialgebraic functions $\rho A$ on $A$. To verify this is a functor, we need an assignment of morphisms which respects composition and identity. If $f \in \rho A$ and $\varphi:A \to B$ is a ring homomorphism, let 
\[ f=\omega_f(a_1,\hdots,a_n), \] 
where $\omega_f$ is an $n$-ary operation and $a_1,\hdots,a_n \in A$. By Proposition 34,
\[ \varphi(f)=\omega_{f}(\varphi(a_1),\hdots,\varphi(a_n)),  \]
and since $\omega_f$ is an $n$-ary operation corresponding to a continuous semialgebraic function, the function $\varphi(f)$ is continuous and semialgebraic. Since the total ring of functions is a functor whose source category is any subcategory of weakly real rings, it preserves composition and identity, and consequently so does $\rho$. 

To show $\rho$ is a reflector is equivalent to showing every map $\varphi$ from a ring $A$ to a real closed ring $B$ factors uniquely through the real closure of $A$. The proof is to simply show $\rho$ is a left adjoint to the forgetful functor via commutativity of the diagram
\begin{center}
\begin{tikzcd}
A \ar[r,"\varphi"] \ar[d,"\rho_{A}" swap] & B \ar[d,"\cong"] \\ \rho A \ar[r,"\rho(\varphi)"] & \rho B    
\end{tikzcd}
\end{center}
so we omit the details. Since we have a factorization 
\[  A \to \rho A \to \semf(A)  \]
and $\underline{\textbf{semf}}$ is monoreflective, $A \to \rho A$ is a monomorphism and $\rcr$ is a monoreflective subcategory of $\redPo$. 

In fact, we can $\rho$ is an H-closed reflector of $\redPo$. By (\cite{semiAlgFuncNreflectors},114-115), Theorem 10.6, H-closed reflectors of $\redPo$ are uniquely determined by how they reflect the partially ordered reduced rings 
\[ \{ \Z[x_1,\hdots,x_n] \}_{n \in \N},   \]
with each ordered by its 'sums of squares' positive cone.
\begin{prop}
Let $\mathscr{F}(n):=\Z[x_1,\hdots,x_n]$, together with the partial order given by sums of squares. Then the map 
\[\rho_{\mathscr{F}(n)}:\mathscr{F}(n) \to \rho\mathscr{F}(n)\]
is an epimorphism. 
\end{prop}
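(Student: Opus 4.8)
The plan is to obtain this as a formal consequence of the reflector machinery already developed, so that the only genuine work is checking that $\rho$ is defined on $\mathscr{F}(n)$ at all. First I would verify that $\mathscr{F}(n)=\Z[x_1,\dots,x_n]$, with its sums-of-squares cone, is a reduced, weakly real partially ordered ring with nonempty real spectrum. It is reduced, being an integral domain. Since a polynomial over $\Z$ that vanishes on $\R^{n}$ is zero, $\mathscr{F}(n)$ is formally real: $-1$ is not a sum of squares (evaluate at the origin), so the sums-of-squares set is a proper positive cone, and a vanishing sum of squares has all its terms zero, so $\mathscr{F}(n)$ is weakly real. Consequently $\sper\mathscr{F}(n)\neq\emptyset$ and $\rho\mathscr{F}(n)$ is defined.

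With that in hand I would simply invoke the structure already in place. The functor $\rho\colon\redPo\to\rcr$ is a reflector, and its unit $\rho_{\mathscr{F}(n)}\colon\mathscr{F}(n)\to\rho\mathscr{F}(n)$ is a monomorphism: the canonical map $\mathscr{F}(n)\to\semf\mathscr{F}(n)$ is a monomorphism, since $\semf(\cdot)$ is a monoreflector, and it factors as $\mathscr{F}(n)\to\rho\mathscr{F}(n)\hookrightarrow\semf\mathscr{F}(n)$, so the first factor is mono as well. Hence $\rho$ is a monoreflector; and since every monoreflector is an epireflector (as shown above), $\rho$ is an epireflector, that is, $\rho_{A}\colon A\to\rho A$ is an epimorphism for every $A$ in the domain of $\rho$. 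Taking $A=\mathscr{F}(n)$ gives the claim.

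In this route there is essentially no obstacle past the bookkeeping of the first paragraph: the statement is just a special case of ``$\rho$ is an epireflector'', and it is worth isolating only because the rings $\mathscr{F}(n)$ are precisely the objects that control the H-closed reflectors of $\redPo$. If instead one wants an argument that does not lean on the earlier (only sketched) verification that $\rho$ is a reflector, the alternative plan is to realize $\rho\mathscr{F}(n)$ as a union of subrings $\mathscr{F}(n)=B_{0}\subseteq B_{1}\subseteq\cdots\subseteq\bigcup_{\lambda}B_{\lambda}=\rho\mathscr{F}(n)$ inside $\semf\mathscr{F}(n)$, where each $B_{\lambda+1}$ is obtained from $B_{\lambda}$ by adjoining a single element that is first-order definable over $B_{\lambda}$ (for instance $\max(f,g)$, $\min(f,g)$, or a bounded square root), then to show each inclusion $B_{\lambda}\hookrightarrow B_{\lambda+1}$ is a ring epimorphism by a local computation on $\sper\mathscr{F}(n)$, and finally to use that a colimit of epimorphisms out of a fixed object is again an epimorphism out of that object. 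The main obstacle in this second route is exactly that local step: showing that adjoining such a ``basic real-closure'' element never enlarges the set of ring homomorphisms to a given target, i.e.\ that it is epimorphic.
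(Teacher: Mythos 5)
Your first route is formally sound as a deduction inside the paper's framework: granting that $\rho:\redPo\to\rcr$ is a reflector and that each unit $A\to\rho A$ is a monomorphism (both asserted earlier in the paper, the adjunction with its details explicitly omitted), the general proposition that monoreflectors are epireflectors immediately gives that every unit, in particular $\rho_{\mathscr{F}(n)}$, is an epimorphism. But this is genuinely not the paper's argument, and in context it is uncomfortably close to circular: the reason $\mathscr{F}(n)$ is singled out at all is that, by Schwartz's Theorem 10.6, H-closed reflectors of $\redPo$ are determined by what they do on these free objects, and the paper uses the present proposition (together with the H-closedness statement that follows it) as part of the verification that $\rho$ \emph{is} an H-closed monoreflector; deriving the proposition from ``$\rho$ is a monoreflector'' therefore leans entirely on the earlier omitted verification, which the direct proof is meant to shore up. The paper's own proof is elementary and element-level: every element of $\rho\mathscr{F}(n)$ has the form $\omega(a_1,\dots,a_m)$ with $\omega$ an $m$-ary (continuous semialgebraic) operation and $a_j\in\mathscr{F}(n)$; homomorphisms of partially ordered rings commute with such operations (the earlier proposition on $n$-ary operations), so two maps $\varphi_1,\varphi_2:\rho\mathscr{F}(n)\to C$ that agree after precomposition with $\rho_{\mathscr{F}(n)}$ agree on the images of the variables $x_i$ and hence on every $\omega(a_1,\dots,a_m)$ --- no adjunction is invoked, which is exactly what each approach buys or loses: yours is shorter and proves the statement for every object at once, the paper's is self-contained on the generators and keeps the subsequent appeal to the H-closed machinery non-circular. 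Your fallback plan (exhausting $\rho\mathscr{F}(n)$ by subrings obtained by adjoining definable elements and showing each step is an epimorphism) is closer in spirit to the paper's, but the step you yourself flag as the main obstacle is precisely the substantive content that the operational description of $\rho\mathscr{F}(n)$ supplies, and you leave it unproved, so as written that route is only a sketch. Your preliminary checks (reduced, weakly real, $\sper\mathscr{F}(n)\neq\emptyset$) are fine.
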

\begin{proof}
The central observation in this proof is that $\mathscr{F}(n)$ is \textit{already} a collection of continuous $n$-ary operations on any real closed field! Let $C \in \text{ob}(\redPo)$, and let 
\[ \varphi_1,\varphi_2:\rho\mathscr{F}(n) \to C \] 
be two partially ordered ring homomorphisms such that 
\[   \varphi_1 \circ \rho_{\mathscr{F}(n)}=\varphi_2 \circ \rho_{\mathscr{F}(n)}. \]
In particular, the morphisms $\varphi_1,\varphi_2$ map the variables $x_i$ to the same elements of $C$. Let $\omega(a_1,\hdots,a_m)$ be an arbitrary element of $\rho\mathscr{F}(n)$ with each $a_j(x_1,\hdots,x_n) \in \mathscr{F}(n)$. We then have 
\begin{align*}
&(\varphi_1-\varphi_2)\omega(a_1,\hdots,a_m) \\=&\varphi_1\omega(a_1,\hdots,a_m)-\varphi_2\omega(a_1,\hdots,a_m) \\=&\omega(\varphi_1(a_1(x_1,\hdots,x_n)),\hdots,\varphi_1(a_m(x_1,\hdots,x_n)))-\\& \omega(\varphi_2(a_1(x_1,\hdots,x_n)),\hdots,\varphi_2(a_m(x_1,\hdots,x_n))) \\=& \omega(a_1(\varphi_1(x_1),\hdots,\varphi_1(x_n)), \hdots,a_m(\varphi_1(x_1),\hdots,\varphi_1(x_n))) - \\& \omega(a_1(\varphi_2(x_1),\hdots,\varphi_2(x_n)), \hdots,a_m(\varphi_2(x_1),\hdots,\varphi_2(x_n))) \\=&0,
\end{align*}
proving that $\varphi_1=\varphi_2$ as $\mathscr{F}(n)$-module homomorphisms, and since they are ring maps, we've proved that $\rho_{\mathscr{F}(n)}$ is an epimorphism.
\end{proof} 

\begin{prop}
If $R$ is a real closed ring and $\varphi:R \to S$ is a surjective ring homomorphism, then $S$ is real closed.
\end{prop}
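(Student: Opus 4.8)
The plan is to read this off from the structural fact just established — that the real closure reflector $\rho\colon \redPo \to \rcr$ is H-closed, i.e.\ $\rcr$ is an H-closed full subcategory of $\redPo$. By the definition of H-closedness for the concrete category $\redPo$, whenever $X \in \mathrm{ob}(\rcr)$ and $Y \in \mathrm{ob}(\redPo)$ admits a surjection $X \twoheadrightarrow Y$, one has $Y \in \mathrm{ob}(\rcr)$. Since $R$ is real closed it is an object of $\rcr$ — in particular $R$ is reduced and carries its canonical positive cone $R_+ = \{\, r : r(\alpha) \ge 0 \text{ for all } \alpha \in \sper R \,\}$ — so it suffices to exhibit $\varphi\colon R \twoheadrightarrow S$ as a surjection in $\redPo$, and then H-closedness gives $S \in \mathrm{ob}(\rcr)$, i.e.\ $S$ is real closed.

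Thus the whole proof reduces to checking that $S$ is an object of $\redPo$, which splits into a harmless part and the content. Harmless: $S$ needs a positive cone of a partial order, for which the trivial cone $\{0\}$ always serves; alternatively, when $\ker\varphi$ is convex, one may use the image cone $\varphi(R_+)$, whose only nonformal axiom — that $x, -x \in \varphi(R_+)$ forces $x = 0$ — comes from applying convexity to $0 \le r \le r + r'$ for suitable lifts $r, r' \in R_+$. The content: $S$ must be \emph{reduced}.

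The step I expect to be the main obstacle is exactly the claim that $S$ is reduced. A homomorphic image of a reduced ring need not be reduced, so this cannot be a formality: it is here that real-closedness of $R$, rather than mere reducedness, must enter — the natural leverage being that real closed rings are closed under $r \mapsto \sqrt{r^{2}}$ and carry a chain of convex prime ideals, which should control how a nilpotent of $S$ lifts to $R$. Should this fail for some surjection $\varphi$ — a point worth scrutinizing — the sharp statement would be that $S$ is real closed whenever $\ker\varphi$ is a real radical ideal of $R$ (the intersection of those supports $\supp(\alpha)$, $\alpha \in \sper R$, that contain it), and the argument would then run on real spectra: identify $\sper S$ with the closed proconstructible subset $V(\ker\varphi) \subseteq \sper R$ and show the restriction $\mathscr{O}_{\sper R}(\sper R) \to \mathscr{O}_{\sper S}(\sper S)$ is surjective with kernel $\ker\varphi$ — a Tietze-type extension statement for constructible compatible sections, which is the genuinely substantive ingredient.
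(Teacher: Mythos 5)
Your proposal is circular. The H-closedness of $\rcr$ inside $\redPo$ is not a ``structural fact just established'' from which this statement can be read off: at this point in the paper the only things in place are that $\rho$ is a monoreflector and that $\mathscr{F}(n) \to \rho\mathscr{F}(n)$ is an epimorphism; the conclusion ``$\rho$ is an H-closed monoreflector'' is drawn \emph{after}, and because of, the present proposition. By the paper's definition, H-closedness of the full subcategory $\rcr$ is literally the assertion that a surjective image of a real closed ring is again real closed --- i.e.\ it is the statement to be proved, not a tool available for proving it. Appealing to the cited Theorem 10.6 of Schwartz--Madden does not repair this: that result says H-closed reflectors are determined by their values on the free objects $\Z[x_1,\dots,x_n]$, so it presupposes H-closedness rather than delivering it.

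Even granting some external H-closedness input, your reduction stalls exactly where you yourself flag it: you never show that $S$ is an object of $\redPo$ (in particular that $S$ is reduced), and you end by proposing to weaken the statement to the case where $\ker\varphi$ is a real radical ideal --- a different, weaker theorem than the one asserted. The paper takes none of this route. It argues directly with the reflector: it places the sequences $0 \to \ker\varphi \to R \to S \to 0$ and $0 \to K_\rho \to \rho R \to \rho S \to 0$ in a commutative ladder, uses that $\rho_R$ is an isomorphism together with the description of elements of $\rho R$ as $n$-ary semialgebraic operations $\omega(r_1,\dots,r_k)$ and their compatibility with ring homomorphisms to show that $\rho_R\lvert_{\ker\varphi}\colon \ker\varphi \to K_\rho$ is an isomorphism of $R$-modules, and then concludes via the Snake Lemma that $\rho_S$ is an isomorphism. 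In that argument reducedness of $S$ is an output, not an input; the substantive content your proposal defers (your ``Tietze-type extension statement'') is precisely what must be supplied, and the proposal does not supply it.
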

\begin{proof}
Since $\rho:\redPo \to \rcr$ is a reflector, we have a commutative diagram
\begin{center}
\begin{tikzcd}
0 \ar[r] & \ker(\varphi) \ar[r,hook] \ar[d] & R \ar[r,"\varphi" two heads] \ar[d,"\rho_{R}"] & S \ar[r] \ar[d,"\rho_{S}"] & 0 \\ 
0 \ar[r] & K_{\rho} \ar[r,hook]  & \rho R \ar[r,"\rho(\varphi)"] & \rho S \ar[r] & 0
\end{tikzcd}
\end{center}
in the category of $R$-modules, where $K_{\rho}$ is the kernel of $\rho(\varphi)$. We know $\rho_{R}$ is an isomorphism, so we may view $\ker(\varphi)$ as an ideal of $\rho R$. If we can show 
\[ \rho_{R}\biggr\lvert_{\ker(\varphi)}:\ker(\varphi) \to K_{\rho}  \]
is an isomorphism of $R$-modules, the Snake Lemma will show $\rho_S$ is an isomorphism, and the proof will be complete. 
\par We know $\rho_{R}|_{\ker(\varphi)}$ is an injective homomorphism because it is a composition of injective morphisms. It is left to show surjectivity of $\rho_{R}|_{\ker(\varphi)}$. Let $\omega(r_1,\hdots,r_k)$ be an element of $K_\rho$. We may consider $\omega(r_1,\hdots,r_k)$ as an element $r \in R$. We then have 
\begin{align*}
&\varphi(r) \\ =&\varphi(\rho_{R}^{-1}(\omega(r_1,\hdots,r_k))) \\ =& \varphi(\omega(\rho_{R}^{-1}(r_1),\hdots,\rho_{R}^{-1}(r_k))) \\ =&\omega(\varphi \circ \rho_{R}^{-1}(r_1),\hdots,\varphi \circ \rho_{R}^{-1}(r_k))   
\end{align*}
is in $\rho S$ since $\varphi \circ \rho_{R}^{-1}(r_i)$ is in $S$, so by commutativity, we have 
\[  \omega(\varphi \circ \rho_{R}^{-1}(a_1),\hdots,\varphi \circ \rho_{R}^{-1}(a_k))=0.  \]
Hence $r \in \ker(\varphi)$, implying $\rho_{R}|_{\ker(\varphi)}$ is an isomorphism of $R$-modules, and the Snake Lemma says $\rho_S$ is an $R$-module isomorphism as well.
\end{proof}

We have now shown $\rho:\redPo \to \rcr$ is an H-closed monoreflector of $\redPo$, so Propositions 31, 32, and 34 together give that $\rcr$ is complete and co-complete.


\section{Properties of Real Closed Rings}

We abbreviate $\mathscr{L}_{\leq}(A)$ as $\lorf A$. Since real closed rings are sections of the structure sheaf on $\sper A$, the following algebraic proofs use the geometry of $\sper A$ and the theory built up in the first section. 

\begin{prop}
(\cite{rcs}, 12) Let $K$ be a (pro)constructible subspace of $X=\sper(A)$. If $\sigma \in \ox(K)$ and $\sigma(\alpha) \neq 0$ in $\rho(\alpha)$ for all $\alpha \in K$, then $\sigma \in \ox(K)^{*}$.
\end{prop}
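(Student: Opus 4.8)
The idea is to exhibit an explicit inverse and verify it lands in $\mathscr{O}_X(K)$. Since each $\rho(\alpha)$ is a field and $\sigma(\alpha)\neq 0$, define $\tau\in\prod_{\alpha\in K}\rho(\alpha)$ by $\tau(\alpha):=\sigma(\alpha)^{-1}$. Then $\sigma\tau$ sends every $\alpha$ to $1\in\rho(\alpha)$, i.e. $\sigma\tau=\sigma_{id}$, the multiplicative identity of $\mathscr{O}_X(K)$; so it suffices to prove $\tau\in\mathscr{O}_X(K)$. By Theorem 29 this amounts to showing $\tau$ is both a compatible section and a constructible section.

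\textbf{Compatibility.} Let $\beta\in K$ be a specialization of $\alpha\in K$. Since $\sigma$ is compatible, $\sigma(\alpha)\in C_{\beta\alpha}$ and $\pi_{\beta\alpha}(\sigma(\alpha))=\iota(\sigma(\beta))$ in $\rho_{\beta\alpha}$. The map $\iota:\rho(\beta)\to\rho_{\beta\alpha}$ is a homomorphism of (real closed) fields, hence injective, and $\sigma(\beta)\neq 0$, so $\pi_{\beta\alpha}(\sigma(\alpha))=\iota(\sigma(\beta))\neq 0$ in $\rho_{\beta\alpha}=C_{\beta\alpha}/m_{\beta\alpha}$. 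Therefore $\sigma(\alpha)\notin m_{\beta\alpha}$, and since $C_{\beta\alpha}$ is a valuation ring — in particular local, with maximal ideal $m_{\beta\alpha}$ — the element $\sigma(\alpha)$ is a unit of $C_{\beta\alpha}$, so $\tau(\alpha)=\sigma(\alpha)^{-1}\in C_{\beta\alpha}$. Because ring homomorphisms carry inverses of units to inverses of units,
\[ \pi_{\beta\alpha}(\tau(\alpha))=\pi_{\beta\alpha}(\sigma(\alpha))^{-1}=\iota(\sigma(\beta))^{-1}=\iota\bigl(\sigma(\beta)^{-1}\bigr)=\iota(\tau(\beta)), \]
so both conditions in the definition of a compatible section hold for $\tau$.

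\textbf{Constructibility.} By Proposition 27 there is a formula $\Phi(T)$ over $\lorf A$ such that for every $\alpha\in K$, $\Phi(\sigma(\alpha))$ holds and $\sigma(\alpha)$ is the unique element of $\rho(\alpha)$ satisfying $\Phi$. Let $\Psi(T)$ be the formula $\exists S\,\bigl(\Phi(S)\wedge S\cdot T=1\bigr)$. For each $\alpha$, $\Psi(\tau(\alpha))$ holds with witness $S=\sigma(\alpha)$; and if $\Psi(y)$ holds, its witness $S$ must equal $\sigma(\alpha)$ by uniqueness for $\Phi$, whence $y=\sigma(\alpha)^{-1}=\tau(\alpha)$. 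Thus $\Psi$ satisfies the two bullet points of Proposition 27, so $\tau$ is constructible. Combining the two paragraphs, $\tau\in\mathscr{O}_X(K)$ by Theorem 29, and $\sigma\tau=\sigma_{id}$ shows $\sigma\in\mathscr{O}_X(K)^{*}$.

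\textbf{Main obstacle.} The only genuinely non-formal step is the observation in the compatibility paragraph: it is the nonvanishing of $\sigma$ at the \emph{specialization} $\beta$, not merely at $\alpha$, that forces $\sigma(\alpha)$ to be a \emph{unit} of the convex (valuation) subring $C_{\beta\alpha}$ rather than just an element of it — and this is exactly what is needed for $\sigma(\alpha)^{-1}$ to remain inside $C_{\beta\alpha}$, so that $\tau$ stays compatible. Everything else is routine manipulation of the model-theoretic description of constructible sections together with facts already established about $C_{\beta\alpha}$, $m_{\beta\alpha}$, and $\rho_{\beta\alpha}$.
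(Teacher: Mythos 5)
Your proposal is correct and follows essentially the same route as the paper: define $\tau(\alpha)=\sigma(\alpha)^{-1}$ pointwise, check compatibility, and establish constructibility via a formula $\exists S\,(\Phi(S)\wedge S\cdot T=1)$ built from the formula $\Phi$ defining $\sigma$, invoking Proposition 27. In fact your compatibility paragraph is more careful than the paper's one-line remark that ``inverses are preserved under ring homomorphisms,'' since you verify explicitly that $\sigma(\alpha)$ is a unit of the local (valuation) ring $C_{\beta\alpha}$ precisely because $\sigma$ does not vanish at the specialization $\beta$, which is what keeps $\tau(\alpha)$ inside $C_{\beta\alpha}$.
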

\begin{proof}
Define $\tau \in \prod_{\alpha \in K}\rho(\alpha)$ such that 
\[  t(\alpha):=\frac{1}{\sigma(\alpha)}. \]
Since $\sigma:K \to \pi^{-1}(K)$ is compatible and inverses are preserved under ring homomorphisms, $\tau:K \to \pi^{-1}(K)$ is also compatible. It is left to show $\tau$ is constructible.
\par Let $\Phi(T)$ be a sentence in $\lorf A$ such that $\Phi(\sigma(\alpha))$ and
\[ \exists x:\Phi(x) \wedge \, \forall \; y (\Phi(y) \implies y=x) \]
holds for all $\alpha \in K$. Let $\Psi(T)$ be the sentence 
\[ \exists T:1=Tx \wedge \Phi(x). \]
We certainly have $\Psi(\tau(\alpha))$ by definition of $\tau$. Additionally, in each $\rho(\alpha)$, there is an element $\tau(\alpha)$ such that $\Psi(\tau(\alpha))$. If there is an element $\omega \in \rho(\alpha)$ such that $\Psi(\omega)$, since $\sigma(\alpha)$ is the unique element satisfying $\Phi(T)$ in $\rho(\alpha)$,
\[ \omega\sigma(\alpha)=1, \]
which implies 
\[  \omega= \frac{1}{\sigma(\alpha)}=\tau(\alpha). \]
By Proposition 27, $\tau$ is constructible, and so $\tau$ is a constructible and compatible inverse to $\sigma$ in $\ox(K)$.
\end{proof}

\begin{prop}
(\cite{rcs}, 12) Let $\sigma \in \ox(K)$. If $\sigma(\alpha) \geq 0$ for all $\alpha \in K$, there is a section $\tau \in \ox(K)$ such that 
\begin{center}
\begin{itemize}
    \item $\tau(\alpha) \geq 0$ for all $\alpha \in K$, and
    \item $\sigma=\tau^{2}$.
\end{itemize}
\end{center}
\end{prop}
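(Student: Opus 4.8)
The plan is to imitate the proof of the previous proposition on inverses: define $\tau$ pointwise as the unique nonnegative square root, then verify separately that $\tau$ is compatible and that $\tau$ is constructible, so that Theorem 29 places $\tau$ in $\ox(K)$. Concretely, for each $\alpha \in K$ the field $\rho(\alpha)$ is real closed and $\sigma(\alpha) \geq 0$, so there is a unique element $\tau(\alpha) \in \rho(\alpha)$ with $\tau(\alpha) \geq 0$ and $\tau(\alpha)^2 = \sigma(\alpha)$; this defines a $\tau \in \prod_{\alpha \in K}\rho(\alpha)$ which is nonnegative at every point and satisfies $\tau^2 = \sigma$ by construction. It remains to show $\tau \in \ox(K)$.

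For compatibility, fix $\alpha,\beta \in K$ with $\beta$ a specialization of $\alpha$, and recall the convex subring $C_{\beta\alpha} \subseteq \rho(\alpha)$, its residue field $\rho_{\beta\alpha}$, and the order-preserving homomorphisms $\pi_{\beta\alpha}$ and $\iota$. First I would check $\tau(\alpha) \in C_{\beta\alpha}$: since $(\sigma(\alpha)+1)^2 - \sigma(\alpha) = \sigma(\alpha)^2 + \sigma(\alpha) + 1 \geq 0$ in the real closed field $\rho(\alpha)$, we have $0 \leq \tau(\alpha) \leq \sigma(\alpha)+1$, and $\sigma(\alpha)+1 \in C_{\beta\alpha}$ because $\sigma$ is compatible (so $\sigma(\alpha) \in C_{\beta\alpha}$) and $C_{\beta\alpha}$ is a subring; convexity of $C_{\beta\alpha}$ then forces $\tau(\alpha) \in C_{\beta\alpha}$. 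Next, in $\rho_{\beta\alpha}$ we compute $\pi_{\beta\alpha}(\tau(\alpha))^2 = \pi_{\beta\alpha}(\sigma(\alpha)) = \iota(\sigma(\beta)) = \iota(\tau(\beta))^2$, while $\pi_{\beta\alpha}(\tau(\alpha)) \geq 0$ and $\iota(\tau(\beta)) \geq 0$ since both maps preserve order; as $\rho_{\beta\alpha}$ is a field (in particular a domain), nonnegative square roots are unique, so $\pi_{\beta\alpha}(\tau(\alpha)) = \iota(\tau(\beta))$. Hence $\tau$ is compatible.

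For constructibility, let $\Phi(T)$ be a formula in $\lorf A$ witnessing the constructibility of $\sigma$ via Proposition 27, so that $\Phi(\sigma(\alpha))$ holds and $\sigma(\alpha)$ is the unique element of $\rho(\alpha)$ satisfying $\Phi$, for every $\alpha \in K$. Take $\Psi(T)$ to be the formula
\[ (T \geq 0) \,\wedge\, \bigl(\exists x : x = T\cdot T \,\wedge\, \Phi(x)\bigr). \]
Then $\Psi(\tau(\alpha))$ holds, a witness exists in each $\rho(\alpha)$ (namely $\tau(\alpha)$ itself), and if $\Psi(\omega)$ holds in $\rho(\alpha)$ then $\omega \geq 0$ and $\omega^2$ satisfies $\Phi$, whence $\omega^2 = \sigma(\alpha)$ and therefore $\omega = \tau(\alpha)$ by uniqueness of nonnegative square roots. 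By Proposition 27, $\tau$ is a constructible section. Combining the two halves, Theorem 29 gives $\tau \in \ox(K)$, and this $\tau$ has the required properties.

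I expect the only genuinely nontrivial point to be the verification that $\tau(\alpha) \in C_{\beta\alpha}$: it relies on the elementary estimate $\sqrt{x} \leq x+1$ (valid in any real closed field) combined with convexity of $C_{\beta\alpha}$ inside $\rho(\alpha)$. Everything else is a routine adaptation of the argument used for inverses in the preceding proposition.
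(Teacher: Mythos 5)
Your proposal is correct and follows the same route as the paper: define $\tau(\alpha)$ as the unique nonnegative square root of $\sigma(\alpha)$ in $\rho(\alpha)$, then verify compatibility and exhibit a defining formula $(T \geq 0) \wedge \Phi(T^{2})$ so that Proposition 27 and Theorem 29 place $\tau$ in $\ox(K)$. In fact your compatibility check (the bound $0 \leq \tau(\alpha) \leq \sigma(\alpha)+1$ together with convexity of $C_{\beta\alpha}$, and uniqueness of nonnegative square roots in $\rho_{\beta\alpha}$) supplies a detail the paper only asserts when it says compatibility of $\tau$ is "a consequence of being defined by $\sigma$."
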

\begin{proof}
For all $\alpha \in K$, define
\[  \tau(\alpha):=+\sqrt{\sigma(\alpha)}. \]
Since the polynomial $x^{2}-\sigma(\alpha)$ has exactly two solutions in $\rho(\alpha)$, one of which is positive, $+\sqrt{\sigma(\alpha)}$ is the unique positive root of $\sigma(\alpha)$. So $\tau$ is well-defined, and a consequence of being defined by $\sigma$ is that $\tau$ is compatible. It is left to show $\tau$ is a constructible section.
\par Let Let $\Phi(T)$ be a sentence in $\lorf A$ such that $\Phi(\sigma(\alpha))$ and
\[ \exists x:\Phi(x) \wedge \, \forall \; y (\Phi(y) \implies y=x) \]
holds for all $\alpha \in K$. Let $\Psi$ be the sentence
\[  \exists T:T \geq 0 \wedge \Phi(T^{2}). \]
We then know $\Psi(\tau(\alpha))$ for each $\alpha$, and $\tau(\alpha)$ is unique because $\sigma(\alpha)$ has exactly one positive root. We have now proved the claim.
\end{proof}

\par Unlike the category of schemes, we can extend all the sections in our structure sheaf to the whole space:

\begin{thm}
(\cite{rcs}, 14) Let $L \subseteq K$ be a constructible subset of a (pro)constructible subspace $K \subseteq \sper A$. Let $\sigma \in \prod_{\alpha \in K}\rho(\alpha)$ be a compatible section such that 
\[  \sigma\biggr\lvert_{L} \in \ox(L), \]
and $\sigma(\alpha)=0$ for all $\alpha \in K \setminus L$. Then $\sigma \in \ox(K)$.
\end{thm}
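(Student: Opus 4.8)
The plan is to invoke Theorem 29, which identifies $\mathscr{O}_{X}(K)$ (via the injection $i$) with the ring of constructible and compatible sections inside $\prod_{\alpha\in K}\rho(\alpha)$. Compatibility of $\sigma$ is assumed, so the whole problem collapses to producing, in the sense of Proposition 27, a single formula $\Phi(T)$ over $\lorf A$ whose unique solution in $\rho(\alpha)$ is $\sigma(\alpha)$, for every $\alpha\in K$ simultaneously.

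First I would collect the two ingredients that the hypotheses hand us. Since $\sigma|_{L}\in\mathscr{O}_{X}(L)$ is in particular a constructible section on $L$, Proposition 27 yields a formula $\Phi_{L}(T)$ over $\lorf A$ such that $\Phi_{L}(\sigma(\alpha))$ holds in $\rho(\alpha)$ and $\sigma(\alpha)$ is its unique solution there, for all $\alpha\in L$. Since $L$ is constructible in $K$, the model-theoretic description of constructible subsets from the first section provides a sentence $\psi$ over $\lorf A$, with finitely many parameters and no free variable, such that for $\alpha\in K$ one has $\alpha\in L$ iff $\rho_{\alpha}\models\psi$; in particular $\alpha\in K\setminus L$ iff $\rho_{\alpha}\models\neg\psi$.

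Next I would set
\[ \Phi(T)\quad:=\quad\bigl(\psi\wedge\Phi_{L}(T)\bigr)\ \vee\ \bigl(\neg\psi\wedge(T=0)\bigr), \]
a formula over $\lorf A$ with the single free variable $T$, and check the two bullet points of Proposition 27 at an arbitrary $\alpha\in K$. If $\alpha\in L$, then $\rho_{\alpha}\models\psi$, so in $\rho(\alpha)$ the second disjunct is never satisfied and $\Phi(x)$ is equivalent to $\Phi_{L}(x)$; hence $\Phi(\sigma(\alpha))$ holds and $\sigma(\alpha)$ is the unique solution. If $\alpha\in K\setminus L$, then $\rho_{\alpha}\models\neg\psi$, so the first disjunct is never satisfied and $\Phi(x)$ is equivalent to $x=0$; since $\sigma(\alpha)=0$ by hypothesis, $\Phi(\sigma(\alpha))$ holds and $0$ is the unique solution. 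As these two cases exhaust $K$, $\Phi$ witnesses that $\sigma$ is a constructible section, and since $\sigma$ is compatible by assumption, Theorem 29 places $\sigma$ in $\mathscr{O}_{X}(K)$.

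The only place any real care is needed is the clause involving $\psi$: one must use that $L$ is \emph{constructible} in $K$ (not merely pro-constructible), which is exactly what guarantees it is cut out on $K$ by a finite-parameter first-order sentence over $\lorf A$, and one should note that $\psi$ and $\Phi_{L}$ both live over the same language $\lorf A$, so the combined formula is legitimate. Everything after that is the routine verification that a formula defined by a case split on $\psi$ inherits the existence-and-uniqueness property from its two branches; the compatibility half of Theorem 29's hypothesis is free, so no further work with specializations is required.
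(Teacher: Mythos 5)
Your proof is correct and follows essentially the same route as the paper: take a sentence cutting out $L$ in $K$, take the formula witnessing constructibility of $\sigma|_{L}$ from Proposition 27, and combine them by the disjunction $(\psi\wedge\Phi_{L}(T))\vee(\neg\psi\wedge T=0)$, which is exactly the paper's formula up to renaming. Your case-by-case verification of existence and uniqueness is just a more explicit writing-out of what the paper leaves implicit.
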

\begin{proof}
By hypothesis, $\sigma \in \prod_{\alpha \in K}\rho(\alpha)$ is a compatible section, so if we can show $\sigma$ is a constructible section, we'll have our proof. Let $\Phi_L \in \lorf A$ be a sentence defining $L \subseteq K$. We know $\sigma \lvert_{L}$ is constructible by hypothesis, so by Proposition 27, there is a sentence $\Phi_{\sigma}(T)$ in $\lorf A$ with free variable $T$ such that $\sigma(\alpha)$ is the unique element of $\rho(\alpha)$ satisfying $\Phi_{\sigma}(T)$ for every $\alpha \in L$. The formula 
\[ (\neg \Phi_L \wedge T=0) \bigvee (\Phi_L \wedge \Phi_{\sigma}(T))  \]
defines $\sigma$, so $\sigma \in \ox(K)$.
\end{proof}

\section{Real Closed Spaces I}

\subsection{Objects}

\begin{defn}
(\cite{rcs}, 38-39) An \textit{affine real closed space} $X$ is a topological space isomorphic to $\sper A$ as a spectral space, together with the sheaf of real closed rings $\ox$. 
\end{defn}

\begin{prop}
Affine real closed spaces are locally ringed spaces.
\end{prop}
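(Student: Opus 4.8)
The plan is to verify the one substantive requirement for the ringed space $(\sper A, \mathscr{O}_X)$ to be locally ringed, namely that each stalk $\mathscr{O}_{X,\alpha}$ is a local ring; that $\mathscr{O}_X$ is a sheaf of rings has already been arranged. I would fix $\alpha \in X \cong \sper A$ and compute the stalk as $\mathscr{O}_{X,\alpha} = \colim_{\alpha \in D} \mathscr{O}_X(D)$, the colimit running over the basic weak-open neighborhoods $D = D(a_1,\dots,a_n)$ of $\alpha$ — each of these is constructible, so that $\mathscr{O}_X(D)$ and the propositions of Section~2 apply to it. Evaluation of sections at $\alpha$ assembles into a ring homomorphism $\mathrm{ev}_\alpha \colon \mathscr{O}_{X,\alpha} \to \rho(\alpha)$, $[\sigma, D] \mapsto \sigma(\alpha)$, and the claim to establish is that its kernel $\mathfrak{m}_\alpha$ is the unique maximal ideal. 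It is a proper ideal, since $\mathrm{ev}_\alpha(1) = 1 \neq 0$ in the field $\rho(\alpha)$ (so the stalk is in particular nonzero), and since a commutative ring is local exactly when its non-units form an ideal, it suffices to show that a germ lying outside $\mathfrak{m}_\alpha$ is invertible; the reverse implication is immediate, as a unit maps to a nonzero element of the field $\rho(\alpha)$.

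For that direction I would take a germ $[\sigma, D]$ with $\sigma(\alpha) \neq 0$ in $\rho(\alpha)$ and regard $\sigma$ as the section $\sigma \colon D \to \pi^{-1}(D) \subseteq \sper A[t]$, which by the structure-sheaf theorems of Section~1 is continuous for the weak topology, and satisfies $\sigma(\beta) = t(\sigma(\beta))$ when $\sigma(\beta)$ is viewed in $\rho(\beta)$. Then
\[ V := \sigma^{-1}\!\big( D(t) \big) \cup \sigma^{-1}\!\big( D(-t) \big) = \{\, \beta \in D : \sigma(\beta) \neq 0 \text{ in } \rho(\beta) \,\} \]
is open in $\sper A$ and contains $\alpha$, hence contains a basic open neighborhood $D' = D(b_1,\dots,b_m)$ of $\alpha$. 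The presheaf property gives $\sigma|_{D'} \in \mathscr{O}_X(D')$, and $\sigma$ vanishes nowhere on $D'$, so the proposition of Section~2 (after \cite{rcs}) asserting that a section of $\mathscr{O}_X$ nonzero at every point of a (pro)constructible subspace $K$ lies in $\mathscr{O}_X(K)^{*}$ yields $\sigma|_{D'} \in \mathscr{O}_X(D')^{*}$. Passing back to germs, $[\sigma, D] = [\sigma|_{D'}, D']$ is a unit in $\mathscr{O}_{X,\alpha}$, which is what was needed.

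I expect the only step requiring genuine care to be the production of the open set $D'$ on which $\sigma$ is nonvanishing: this hinges on the fact, established in Section~1, that $\sigma$ regarded as a map into $\sper A[t]$ is continuous in the weak topology, so that the nonvanishing locus $\{\beta : \sigma(\beta) \neq 0\}$ is genuinely open rather than merely constructible. Everything else is formal — the cited Section~2 proposition does the real work of turning ``nonzero at every point'' into ``invertible'', and the elementary local-ring criterion finishes it — and the statement of the proposition then follows, since being a locally ringed space imposes no condition on the object beyond carrying a sheaf of rings with local stalks.
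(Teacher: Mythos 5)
Your proof is correct, and it takes a noticeably different route from the one in the paper. The paper's argument identifies the stalk with a localization: it uses the extension-by-zero theorem to map each $\mathscr{O}_X(U)$ into the global sections, obtains from the universal properties of the colimit and of localization a pair of maps between $\mathscr{O}_X\bigl\lvert_{\alpha}$ and $\mathscr{O}_X(X)_{\supp(\alpha)}$, and concludes these are mutually inverse, so the stalk is local because it \emph{is} a localization of the global sections at the prime of sections vanishing at $\alpha$. You instead prove locality directly: the non-units of the stalk are exactly the kernel of $\mathrm{ev}_\alpha$, the essential point being that a germ with $\sigma(\alpha)\neq 0$ is a unit, which you get by pulling back $D(t)\cup D(-t)$ along the weak-continuous map $\sigma\colon D\to\sper A[t]$ to produce a basic open neighborhood on which $\sigma$ is nonvanishing, and then invoking the Section~2 proposition that a nowhere-vanishing element of $\mathscr{O}_X(K)$ lies in $\mathscr{O}_X(K)^{*}$. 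Two remarks on the comparison: first, your argument makes explicit precisely the step the paper treats most tersely (the passage from ``$\eta(\alpha)\neq 0$'' to ``$\eta\bigl\lvert_{\alpha}$ is invertible'' requires exactly your nonvanishing-neighborhood argument, which the paper does not spell out), and it avoids any appeal to extension by zero; second, by indexing the colimit over the basic opens $D(a_1,\dots,a_n)$, which are constructible and cofinal among open neighborhoods, you stay entirely within the sets on which $\mathscr{O}_X$ is actually defined, whereas the paper's colimit over all weak-opens quietly uses sections over opens that need not be (pro)constructible. What the paper's route buys in exchange is the stronger, explicit identification $\mathscr{O}_X\bigl\lvert_{\alpha}\cong\mathscr{O}_X(X)_{\supp(\alpha)}$, which your proof does not provide (and does not need).
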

\begin{proof}
Let $X=\sper A$ be an affine real closed space. It is sufficient to show that 
\[ \ox\biggr\lvert_{\alpha}:=\colim_{V open, \alpha \in V} \ox(V) \]
is a local ring. Consider the diagram
\begin{center}
\begin{tikzcd}
 & \ox\biggr\lvert_{\alpha} & \\ \ox(X) \ar[ru] \ar[rr,"\psi"] \ar[rd,"ev_{\alpha}" swap] &   & \ox(X)_{\supp(\alpha)} \\ & \rho(\alpha). &
\end{tikzcd}
\end{center}
For all open $U$ containing $\alpha$, we get a map $ \ox(U) \to \ox(X)_{\supp(\alpha)}$ where we first extend the fucntions of $\ox(U)$ by zero to $\ox(X)$ to obtain a ring map $\ox(U) \to \ox(X) \to \ox(X)_{\supp(\alpha)}$. By the universal property of the colimit, we get map 
\begin{center}
\begin{tikzcd}
 & \ox\biggr\lvert_{\alpha} \ar[rd,dotted,bend left =20] & \\ \ox(X) \ar[ru] \ar[rr,"\psi"] \ar[rd,"ev_{\alpha}" swap] &   & \ox(X)_{\supp(\alpha)} \\ & \rho(\alpha), &
\end{tikzcd}
\end{center}
and applying that same property to $\text{ev}_{\alpha}$ gives a map \[ \ox\biggr\lvert_{\alpha} \to \rho(\alpha).  \]
The resulting triangles commute. 
Now let $f \in \ox(X)$ such that $f \notin \supp(\alpha)$. Since $ev_{\alpha}(f) \neq 0$ in $\rho(\alpha)$, and $ev_{\alpha}$ factors through $\ox\biggr\lvert_{\alpha}$, the image of $f$ in $\ox\biggr\lvert_{\alpha}$ is also nonzero. Since the ring map $\ox(X) \to \ox\biggr\lvert_{\alpha}$ sends ring elements in $\ox(X) \setminus \supp(\alpha)$ to nonzero elements of $\ox\biggr\lvert_{\alpha}$. The nonzero elements of $\ox\biggr\lvert_{\alpha}$ extend to nonzero sections $\eta$ on an open $V$ containing $\alpha$, and $\eta(\alpha) \neq 0$. Hence $\eta\biggr\lvert_{\alpha}$ is invertible, and the universal property of localization gives a commutative diagram
\begin{center}
\begin{tikzcd}
 & \ox\biggr\lvert_{\alpha} \ar[rd,dotted,bend left =15] & \\ \ox(X) \ar[ru] \ar[rr,"\psi"] \ar[rd,"ev_{\alpha}" swap] &   & \ox(X)_{\supp(\alpha)} \ar[lu,dotted,bend left=15] \\ & \rho(\alpha). &
\end{tikzcd}
\end{center}
Since the upper triangle commutes in both direction, it follows that both ring maps indicated by dotted arrows are isomorphisms inverse to each other.
\end{proof}

Let $f:\sper A \to \sper B$ be a morphism of spectral spaces with $\beta \in \sper B$ and $\alpha \in f^{-1}(\beta)$. Using the corresponding map of rings, we acquire a diagram
\begin{center}
\begin{tikzcd}
 B \ar[r] \ar[d] & A \ar[d]  \\ B_{\supp(\beta)}  &  A_{\supp(\alpha)}. 
\end{tikzcd}
\end{center}
Since $B \to A \to A_{\supp(\alpha)}$ takes the multiplicative system $B \setminus \supp(\beta)$ to \[ A \setminus f^{\#}(\supp(\beta)) = A \setminus \supp(\alpha),\]
the universal property of localization gives a ring morphism 
\begin{center}
\begin{tikzcd}
 B \ar[r] \ar[d] & A \ar[d]  \\ B_{\supp(\beta)} \ar[r,dotted] &  A_{\supp(\alpha)}. 
\end{tikzcd}
\end{center}
We have thus shown

\begin{lemma}
Morphisms of affine real closed spaces are morphisms of locally ringed spaces.
\end{lemma}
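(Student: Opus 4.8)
The plan is to notice that the essential computation has already been carried out in the paragraph preceding the lemma, and to package it as a locality statement on stalks. Let $f : X = \sper A \to Y = \sper B$ be a morphism of affine real closed spaces, write $f^{\#} : B \to A$ for the corresponding homomorphism of rings, and let $f$ also denote the accompanying morphism of sheaves of rings $\mathscr{O}_{Y} \to f_{*}\mathscr{O}_{X}$. Fix a point $\alpha \in X$ and set $\beta = f(\alpha) \in Y$. By the preceding proposition both $X$ and $Y$ are locally ringed spaces, so it suffices to show that the induced homomorphism on stalks $\mathscr{O}_{Y,\beta} \to \mathscr{O}_{X,\alpha}$ is a local homomorphism of local rings.

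I would assemble two facts established earlier. First, from the proof that affine real closed spaces are locally ringed spaces, the stalk $\mathscr{O}_{X,\alpha}$ is canonically isomorphic to the localization $A_{\supp(\alpha)}$, with maximal ideal $\supp(\alpha)A_{\supp(\alpha)}$, and likewise $\mathscr{O}_{Y,\beta} \cong B_{\supp(\beta)}$. Second, from the discussion immediately preceding the lemma, $f^{\#}$ followed by $A \to A_{\supp(\alpha)}$ carries the multiplicative set $B \setminus \supp(\beta)$ into $A \setminus \supp(\alpha)$ — this being exactly the equality $\supp(\beta) = (f^{\#})^{-1}(\supp(\alpha))$, which holds because the inverse image of a prime under a ring homomorphism is prime and because $\supp$ commutes with pullback of cones, both proved in Section 1 — so the universal property of localization yields the ring homomorphism $B_{\supp(\beta)} \to A_{\supp(\alpha)}$. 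Under the two isomorphisms above this is precisely the stalk map of the structure-sheaf morphism, and since it is built to be compatible with $f^{\#}$ and the localization maps, the preimage of $\supp(\alpha)A_{\supp(\alpha)}$ is exactly $\supp(\beta)B_{\supp(\beta)}$; hence the stalk map is local and $f$ is a morphism of locally ringed spaces.

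The one step that needs genuine care — the main obstacle — is the identification used above: one must check that the abstractly defined stalk map of the sheaf morphism $\mathscr{O}_{Y} \to f_{*}\mathscr{O}_{X}$ really does agree, under the stalk-versus-localization isomorphisms, with the concrete homomorphism $B_{\supp(\beta)} \to A_{\supp(\alpha)}$ produced by the universal property of localization. This is a naturality check rather than a computation: both arrows are the unique map through which the composite $B \xrightarrow{f^{\#}} A \to \mathscr{O}_{X,\alpha}$ factors, and the colimit-to-localization isomorphism from the previous proposition is natural with respect to restriction along $f$, so the two maps coincide. Once this compatibility is pinned down, the locality of the stalk maps is immediate and the lemma follows.
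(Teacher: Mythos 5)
Your proposal is correct and follows essentially the same route as the paper: the paper's own proof is exactly the paragraph preceding the lemma, which notes that the ring homomorphism carries $B \setminus \supp(\beta)$ into $A \setminus \supp(\alpha)$ (since $\supp(\beta)$ is the preimage of $\supp(\alpha)$) and invokes the universal property of localization to produce $B_{\supp(\beta)} \to A_{\supp(\alpha)}$, combined with the previous proposition identifying stalks with these localizations. Your added naturality check that the abstract stalk map agrees with the localization map is a point the paper passes over silently, so if anything your write-up is the more careful version of the same argument.
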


\begin{defn}
(\cite{rcs}, 39) A \textit{real closed space} $(X,\ox)$ is a locally ringed space together with an open cover $\{U_i\}_{i \in I}$ such that for each $i$, $(U_i,\mathscr{O}_{U_i})$ is an affine real closed space.
\end{defn}

Morphisms of real closed spaces are, of course, morphisms of locally ringed spaces, so we do not need to define real closed space morphisms separately. We suppress the notation $(X,\ox)$ moving forward, and will simply refer to the real closed space $X$.

\subsection{Subspaces}

Given a real closed space $X$, a topological subspace $S$ of $X$ which is subspace in the weak topology on $X$ can be given a structure sheaf $\mathscr{O}_{S}$ so that $(S,\mathscr{O}_{S})$ is a locally ringed space. Depending on how the topological subspace $S$ sits in the topology of $X$, we have different ways of speaking about subspaces, so allow me to elucidate this point below.

\begin{defn}
Let $X$ be a real closed space, and let $U$ be a topological subspace of $X$ which is open in $X$. Then the space $U$ together with the structure sheaf $\mathscr{O}_{U}:=\ox|_{U}$ is an \textit{open subspace} of $X$.
\end{defn}

\begin{ex}
Say $X=\sper \R[x,y]$, and in the language $\lorf \R[x,y]$, we consider the sentence $\Phi_{U}$
\[  \exists X,Y \text{ s.t. } \neg (X=0) \bigvee \neg (Y=0), \]
and let $U$ be the subspace of $X$ define by $\Phi_{U}$. If $\beta \in U$ and $\alpha$ is a generization of $\beta$, we have a  diagram
\begin{center}
\begin{tikzcd}
\R[x,y]/\supp(\alpha) \ar[r] \ar[d] & \rho(\alpha)  & \\ \R[x,y]/\supp(\beta) \ar[r,"\rho_{\beta}" ] & \rho(\beta). & 
\end{tikzcd}
\end{center}
Since $\rho(\beta) \models \Phi_{U}$, we know one at least one of $\bar{x}$ and $\bar{y}$ is not in $\ker(\rho_{\beta})$, and Without loss of generality, we assume $\bar{x} \notin \ker(\rho_{\beta})$. Since $\R[x,y]/\supp(\alpha) \to \R[x,y]/\supp(\beta)$ is surjective, $\bar{x}$ has a nonzero element $x+\supp(\alpha)$ in its preimage. Consequently, the composite map
\[ \R[x,y] \twoheadrightarrow \R[x,y]/\supp(\alpha) \hookrightarrow \R(\alpha) \]
does not have $x$ in its kernel, proving $\rho_{\alpha} \models \Phi_{U}$ Hence $U$ is closed under generization, proving $U$ is open. It is left to describe the relationship between the sheaves $\ox$ and $\mathscr{O}_{U}$.

We know the inclusion map $i:U \to X$ is continuous map in the weak and constructible topologies. On the level of stalks, we have a ring map
\[ \iota:\ox|_{i(\alpha)} \to \mathscr{O}_{U}|_{\alpha}, \]
which is the identity map from the definition of $\mathscr{O}_{U}$. Hence $i$ is a morphism of real closed spaces.

If $T$ is a real closed space with morphisms $\zeta_j:T \to U$ such that $i \circ \zeta_1 =i \circ \zeta_2$, we first observe that for each $\alpha \in T$, \[\zeta_1(\alpha)=\zeta_2(\alpha) \]
since $i$ is injective as a map of sets. For our structure sheaves, we have the commutative diagram
\begin{center}
\begin{tikzcd}
\ox\biggr\lvert_{i \circ \zeta_{j}(\alpha)} \ar[r,bend left=20,"\zeta_1^{\#}"] \ar[r,bend right=20,swap,"\zeta_2^{\#}"] & \mathscr{O}_{U}\biggr\lvert_{\zeta_{j}(\alpha)} \ar[r,"\iota"]& \mathscr{O}_{Z}\biggr\lvert_{\alpha}.
\end{tikzcd}
\end{center}
Hence $\zeta_1^{\#}=\zeta_2^{\#}$, and the inclusion map $i:U \to X$ is a monomorphism in the category of real closed spaces. 
\end{ex}

If $Z:=X \setminus U$ for $U$ open, $Z$ has the structure of a closed topological subspace, and $\ox|_{Z}$ gives $Z$ a locally ringed structure. In the above example, $Z$ is a point, so it is not hard to see $Z$ is closed under specialization, but this holds for any $Z$ which is the complement of an open. Together, we have

\begin{defn}
Let $X$ be a real closed space, and let $Z$ be a topological subspace of $X$ which is closed in $X$. Then the space $Z$ together with the structure sheaf $\mathscr{O}_{Z}:=\ox|_{U}$ is a \textit{closed subspace} of $X$.
\end{defn}

From topology, we know taking the intersection of infinitely many open sets may not be open, but in the real closed category, such intersections are proconstructible. Let's see an example;

\begin{ex}
Let $X=\sper \R[x,y]$. For each natural $n$, let $\Phi_{n}$ in $\lorf \R[x,y]$ be the sentence
\[ \exists X,Y \text{ s.t. } X^{2}+Y^{2}<\frac{4}{n}.  \]
Each $\Phi_{n}$ defines an open subspace $U_{n} \subset X$, and let 
\[ W=\bigcap_{n \in \N} U_{n}.  \]
We certainly have the homomorphism $\R[x,y] \to \R$ given by $x,y \mapsto 0$ satisfies $\Phi_{n}$ for all natural $n$, so $W$ contains at least one closed point. Taking the transcendence degree 2 extension $\R(x,y)$ of $\R$, we can take the real closure of $\R(x,y)$ with respect to any ordering. For example, the set
\[   \gamma:=\left\{ f(x,y) \in \R(x,y): \exists \epsilon>0 \text{ such that } f\biggr\lvert_{[0,2\epsilon] \times [0,\epsilon]} \geq 0 \right\} \]
orders $\R(x,y)$, and $\R[x,y] \to \R(\gamma)$ satisfies $\Phi_n$ with $2Y=X=\epsilon$, and so $\gamma \in W$.
\end{ex}

In fact, points of $W$ are exactly the generizations of $\R[x,y] \to \R$ with $x,y \mapsto 0$. Let $\lambda \subset \R[x,y]$ be the fixed closed point of $W$, where $\R(\lambda) \cong \R$ and $\text{ht}(\supp(\lambda))=2$. If $\alpha \subseteq \lambda$ is a point of $\sper \R[x,y]$, $\supp(\alpha)$ is a convex prime ideal of height 0 or 1. If $\supp(\alpha)$ has height 0, $\supp(\alpha)=\langle 0 \rangle$, and $\alpha$ is some positive cone which orders $\R(x,y)$ as in Example 54 (\cite{rag},157). If $\text{ht}(\supp(\alpha))=1$, then 
\[ \R[x,y] \to \R[x,y]/\supp(\alpha) \to \R(\alpha) \cong  \R(t), \]
where $\R(t)$ is a hyperreal extension of $\R$. Since $\R$ is complete, there are exactly two hyperreal extensions of $\R$ for each real number $r \in \R$ (\cite{rag},254). 

\begin{defn}
(\cite{rcs},46) If $X$ is a real closed space with $\lambda \in X$, The collection of generizations $W_{\lambda}$ of $\lambda$ is called the \textit{local subspace of} $X$ \textit{at} $\lambda$. 
\end{defn}

Since local subspaces $W_{\lambda} \subseteq X$ are formed by taking infinite intersections, there is not necessarily single sentence defining the local subspace, and so local subspaces are not usually affine real closed, nor are there open affine subspaces of local subspaces given by open affines in $X$.

Ideally, we would like to have something analogous to a local description of local subspaces, and we obtain such a description by examining the maps from valuative spaces to local subspaces. '

\begin{defn}
(\cite{rcs},44) Let $\ell$ be an orderable field with $A \subseteq \ell$ a valuation ring. In a real closure $\rho \ell$ of $\ell$ with respect to an ordering $\alpha$, take the intersection $\bigcap C$ of all convex subrings $C$ of $\rho \ell$ containing $A$. Denoting this intersection by $C_{\alpha}$, $\sper C_{\alpha}$ has at least two points; $\beta$, which corresponds to the maximal ideal $m_{\alpha} \subset C_{\alpha}$, and $\alpha$  corresponding to the ordering of $\ell$. We call \[ V=\{\alpha, \beta\} \subseteq \sper C_{\alpha} \] a \textit{valuative subspace} of $\sper C_{\alpha}$.
\end{defn}

We can now use valuative spaces to probe the structure of local spaces. Let $X$ be a real closed space with $x \in X$, and let $W_{x}$ be the local subspace of $X$ at $x$. Let $w \neq x$ with $w$ specializing to $x$, and $\alpha_x$ and $\alpha_w$ are orderings corresponding to $x$ and $w$ in an open affine neighborhood of $x$. We have a diagram
\begin{center}
\begin{tikzcd}
\sper \rho(\alpha_x) \ar[r] & W_x \ar[d,hook] \\ & X. 
\end{tikzcd}
\end{center}
There is a convex valuation ring \[ C_{xw} \subseteq \rho(\alpha_x) \subseteq \rho(\alpha_w), \] and let $V \subseteq \sper C_{xw}$ be the valuative space consisting of $x$ and $w$. We can then extend our diagram to a square;
\begin{center}
\begin{tikzcd}
\sper \rho(\alpha_x) \ar[r] \ar[d,"x" swap] & W_x \ar[d,hook] \\ V \ar[r,hook] & X, 
\end{tikzcd}
\end{center}
and since $w \in W_x$, we have a lift 
\begin{center}
\begin{tikzcd}
\sper \rho(\alpha_x) \ar[r] \ar[d,"x" swap] & W_x \ar[d,hook] \\ V \ar[r,hook] \ar[ru, dotted] & X. 
\end{tikzcd}
\end{center}
So our choice of generalization of $x$ gives a unique lift in our commutative diagram.

We can generalize this concept substantially. Let $f:X \to Y$ be any morphism of real closed spaces with $x$ in the fiber over $y$ and $y'$ a specialization of $y$.  Let $\sper \rho(\alpha_x) \to X$ be the morphism specifying $x$. Let $C_{y'y}$ be the convex valuation ring of the specialization $y' \in \overline{\{y\}}$, and let $V \subseteq \sper C_{y'y}$ be the valuative space of these points. We then have a commutative diagram
\begin{center}
\begin{tikzcd}
\sper \rho(\alpha_x) \ar[r] \ar[d,"y" swap] & X \ar[d] \\ V \ar[r,hook] & Y, 
\end{tikzcd}
\end{center}
and this diagram can be constructed from any morphism of nonempty real closed spaces, and our choice of the specialization $y'$ of $y$ uniquely determines the space $V$. We urge the reader to revisit the diagram above when we discuss quasiseparated morphisms!

\begin{ex}
In $\sper \R_{0}[x,y,z]$ let the subspace $X$ be given by the equations $z=y^{2}$ and $y \neq 0$. Let $Y=\sper \R_{0}[u,v]$, and consider the morphism $f:X \to Y$ given on the level of rings by sending $u \mapsto x$ and $v \mapsto z$. Let $\sper \R_{0}(1,2) \to Y$ be the point corresponding to $(1,2)$ in $Y$. Since $(\sqrt{2})^{2}=2$ and $\sqrt{2} \in \R_{0}$, we know $(1,2)$ is in the set-theoretic image of $f$. Let 
\begin{center}
\begin{tikzcd}
\sper \R_{0}(\epsilon_{v}<2) \ar[r] \ar[d]  & Y \\ \sper \R_{0}(1,2) \ar[ru]  &
\end{tikzcd}
\end{center}
be a generization of $(1,2)$, and let $W_{2,\epsilon_{v}}$ be the corresponding valuative subspace. We see $(1,\sqrt{6},2)$ is in the fiber over $(1,2)$, so we have a commutative diagram
\begin{center}
\begin{tikzcd}
 \sper \R_{0}(1,\sqrt{6},2) \ar[r] \ar[d,"12" swap]  &  X \ar[d,"f"]  \\ W_{2,\epsilon_{v}} \ar[r, "j" hook]  &  Y.  
\end{tikzcd}
\end{center}
While we cannot always lift the morphism between the valuative space and $Y$ to a morphism between the valuative space and $X$, we can in this example. We know the set-theoretic image of $f$ is the subspace
\[  \{\R_{0}[u,v] \to \rho: v>_{\rho}0  \},  \]
and we may define a morphism $g:\im(f) \to X$ given by the homomorphism \[\Gamma(\sper \left(\R_{0}[x,y,z]/\langle z-y^{2} \rangle)_{y}\right) \to \Gamma(\sper \R_{0}[u,v])\] determined by $x \mapsto u,z \mapsto v$, and $y$ is mapped to the unique positive square root of $v$. Since $X$ may be written as the disjoint union of $X \cap (y>0)$ and $X \cap (y<0)$, we see $\im(g)=X \cap (y>0)$, and $f \circ g = \text{id}_{\im(f)}$. We then have a section by composing the local section $g$ with $j$;
\begin{center}
\begin{tikzcd}
 \sper \R_{0}(1,\sqrt{6},2) \ar[r] \ar[d,"12" swap]  &  X \ar[d,"f"]  \\ W_{2,\epsilon_{v}} \ar[r, "j", hook, swap] \ar[ru,"g \circ j", swap, dotted] &  Y.  
\end{tikzcd}
\end{center}
\end{ex}
 
\subsection{Morphisms}

The last example of the previous section begs the question, is the image of a morphism always a subspace? While the answer is no generally, the answer is yes if the morphism is quasicompact.

\begin{defn}
(\cite{rcs},56) A morphism $f:X \to Y$ of real closed spaces is \textit{quasicompact} if for all open affine $\sper B \subseteq Y$, $f^{-1}(\sper B)$ is a quasicompact subspace of X. More explicitly, any open cover $\{\sper A_{\lambda}\}_{\lambda \in \Lambda}$ of $f^{-1}(\sper B)$ has a finite subcover $\sper A_1, \hdots ,\sper A_n$.
\end{defn}

\begin{prop}
(\cite{rcs}) If $f:X \to Y$ is a quasicompact morphism of real closed spaces and $K$ is a constructible subspace of $X$, then $f(K)$ is locally constructible in $Y$.
\end{prop}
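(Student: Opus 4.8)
The plan is to verify local constructibility affine-locally on the target and, on each affine chart, to reduce the image computation to the finitely presented Tarski--Seidenberg theorem (Theorem 14). Since local constructibility is by definition a condition to be checked on an open cover of $Y$, it suffices to fix an affine open $\sper B \subseteq Y$ and show that $f(K) \cap \sper B$ is constructible in $\sper B$. Every point of $f(K) \cap \sper B$ is the image of a point of $K$ lying over $\sper B$, so $f(K)\cap \sper B = f\bigl(K \cap f^{-1}(\sper B)\bigr)$. Now I would invoke quasicompactness: $f^{-1}(\sper B)$ is quasicompact, so the cover of $f^{-1}(\sper B)$ by affine opens of $X$ has a finite subcover $\sper A_1,\dots,\sper A_n$ with each $\sper A_i \subseteq f^{-1}(\sper B)$, and $f$ restricts to morphisms of affine real closed spaces $f_i : \sper A_i \to \sper B$. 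Setting $K_i := K \cap \sper A_i$, which is constructible in $\sper A_i$ because $K$ is constructible in $X$, we get
\[ f(K)\cap \sper B = \bigcup_{i=1}^{n} f_i(K_i), \]
and since a finite union of constructible sets is constructible, it is enough to prove that each $f_i(K_i)$ is constructible in $\sper B$.

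Fix $i$, let $g_i : B \to A_i$ be the ring homomorphism underlying $f_i$, and use the model-theoretic description of constructible sets from the ``Model-Theoretic Moment'': $K_i$ is cut out by a first-order formula $\phi$ in $\lorf A_i$ with only finitely many parameters $a_1,\dots,a_m \in A_i$. This finiteness is the lever. I would form the finitely presented $B$-algebra $P := B[T_1,\dots,T_m]$ together with the $B$-algebra map $p : P \to A_i$ sending $T_j \mapsto a_j$, so that $B \to P$ is finitely presented and the formula $\phi$, read with $T_j$ replacing $a_j$, defines a constructible set $\widetilde{K} \subseteq \sper P$ with $K_i = (\sper p)^{-1}(\widetilde K)$. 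Writing $\pi_B : \sper P \to \sper B$ for the projection, one has $f_i(K_i) = \pi_B\bigl(\widetilde K \cap \im(\sper p)\bigr)$. Because $B \to P$ is finitely presented, Theorem 14 guarantees that $\pi_B$ carries (pro)constructible subsets of $\sper P$ to (pro)constructible subsets of $\sper B$; hence the whole problem collapses to showing that $\widetilde K \cap \im(\sper p)$ is constructible in $\sper P$.

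The main obstacle is exactly this last point, and it is where the structure theory of real closed rings must be used. A priori $\im(\sper p)$ is only pro-constructible, since $A_i$ need not be finitely presented over $P$, and a naive spectral-map argument delivers pro-constructibility (patch-closedness) but not constructibility. To upgrade, I would exploit that $\im(\sper p)$ is nothing but the graph over $\sper B$ of the tuple $(a_1,\dots,a_m)$, and that each $a_j$, as an element of the real closed ring $A_i = \mathscr{O}_{\sper A_i}(\sper A_i)$, is a semialgebraic function: by Definition 38 it equals $\omega_j(c_{j,1},\dots,c_{j,k_j})$ for a semialgebraic operation $\omega_j$ whose graph is described by a first-order formula over $\R_0$. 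Combined with the fact from Section 1 that the real closure reflector $\rho : \redPo \to \rcr$ leaves the real spectrum unchanged (the reflection unit induces a homeomorphism on $\sper$), this lets me replace the finitely generated subalgebra $g_i(B)[a_1,\dots,a_m]$ and its real closure by a finitely presented real closed $B$-algebra that carries the same constructible locus $K_i$ and the same relevant real spectrum. After this replacement $\sper p$ is the spectral map of a finitely presented morphism, so $\im(\sper p)$ is constructible by Theorem 14 applied to the full space, and $\widetilde K \cap \im(\sper p)$ is constructible as an intersection of two constructible sets.

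The delicate technical crux, then, is controlling the relations satisfied by the parameters $a_j$ over $B$ well enough to produce a genuinely finitely presented model: this is the step where one must argue that the finite semialgebraic description of the sections $a_j$ prevents the appearance of infinitely many independent relations, so that the passage to $P$ (and its real closure) loses nothing in the image. Both hypotheses enter here in an essential way: quasicompactness supplies the finitely many affine charts $\sper A_i$ over which the finite-parameter reduction is performed, and the rigidity of real closed rings --- specifically the semialgebraic-function presentation of $A_i$ and the real-spectrum-preserving reflector --- is what converts the merely pro-constructible image of a non-finitely-presented map into a genuinely constructible set. Assembling the finitely many constructible pieces $f_i(K_i)$ then yields that $f(K)\cap \sper B$ is constructible in $\sper B$ for every affine open $\sper B \subseteq Y$, which is precisely the assertion that $f(K)$ is locally constructible in $Y$.
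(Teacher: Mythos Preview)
Your reduction to an affine open $\sper B\subseteq Y$, the finite cover $f^{-1}(\sper B)=\bigcup_{i=1}^n \sper A_i$ via quasicompactness, and the decomposition $f(K)\cap\sper B=\bigcup_i f_i(K_i)$ all match the paper exactly. The divergence begins immediately after: the paper simply invokes Theorem~18 (the Coste--Roy projection theorem) on each map $\sper A_i\to\sper B$ to conclude that $f_i(K_i)$ is constructible, and then takes the finite union. There is no factorization through a free polynomial algebra, no analysis of $\im(\sper p)$, and no appeal to the semialgebraic-function presentation of real closed rings.

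Your detour through $P=B[T_1,\dots,T_m]$ does not close. You correctly observe that $f_i(K_i)=\pi_B(\widetilde K\cap\im(\sper p))$ and that $\im(\sper p)$ is a priori only pro-constructible, but the proposed fix---that the $a_j$, being elements of a real closed ring, are semialgebraic functions and that this ``prevents the appearance of infinitely many independent relations''---does not establish constructibility of $\im(\sper p)$. Every element of $A_i$ is trivially a semialgebraic function on $\sper A_i$; this places no finiteness constraint on the relations that $a_1,\dots,a_m$ satisfy over $B$. The appeal to the reflector $\rho$ preserving real spectra is true but orthogonal: it tells you $\sper A_i\cong\sper\rho A_i$, not that $g_i(B)[a_1,\dots,a_m]$ or its real closure is finitely presented over $B$. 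You yourself flag this as ``the delicate technical crux'' and then assert the conclusion rather than proving it, so the argument as written has a genuine gap at exactly the point where the real content lies.

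If you want to repair this route, the honest statement is that Theorem~18 (or the Coste--Roy result it records) already does the work once you note that the constructible set $K_i$ is described by a formula with finitely many parameters from $A_i$: the image under $\sper A_i\to\sper B$ is then handled by the same quantifier-introduction argument that proves Theorem~18, applied to the finitely many $a_j$ rather than to a full presentation of $A_i$. That is essentially what the paper is citing; your factorization through $P$ reintroduces a difficulty that the model-theoretic viewpoint avoids.
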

\begin{proof}
We want to prove, in any open $\sper B$ of $Y$ containing a point in the image , that $f(K) \cap \sper B$ is constructible. Since $f$ is quasicompact, we can write  
\[  f^{-1}(\sper B)= \bigcup_{i=1}^{n} \sper A_i  \]
and for a single open affine $\sper A_j$,
\begin{center}
\begin{tikzcd}
K \cap \sper A_j \ar[d,hook] \ar[rd] &  \\  \sper A_j \ar[r,"f\lvert_{\sper A_j}" swap]  &  \sper B  
\end{tikzcd}
\end{center}
commutes. By Theorem 18 \cite{coste_roy_Topologie_1982}, the image of $K \cap \sper A_j$ in $\sper B$ is constructible, and so the finite union 
\[ \bigcup_{i=1}^{n} f(K \cap \sper A_i)=f(K) \cap \sper B  \]
is constructible as well. 
\end{proof}

If $X=\sper A$ and $Y=\sper B$, for any basic open $D(g) \subseteq Y$, the limit over the diagram
\begin{center}
\begin{tikzcd}
  & \sper A \ar[d]  \\  D(g) \ar[r,hook] & \sper B
\end{tikzcd}
\end{center}
is 
\begin{center}
\begin{tikzcd}
 D\left(f^{\#}(g)\right) \ar[r,hook] \ar[d,"f\lvert_{D\left(f^{\#}(g)\right)}", swap] & \sper A \ar[d,"f"]  \\  D(g) \ar[r,hook] & \sper B,
\end{tikzcd}
\end{center}
so every basic open set has basic inverse image, which is itself open and affine. We have now proved

\begin{prop}
(\cite{rcs},58) Morphisms of affine real closed spaces are quasicompact.    
\end{prop}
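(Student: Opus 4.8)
The plan is to exploit the computation carried out just above the statement: for any basic open $D(g) \subseteq \sper B$, the preimage under $f$ is the basic open $D(f^{\#}(g)) \subseteq \sper A$, where $f^{\#}\colon B \to A$ is the ring homomorphism inducing $f$, and moreover this preimage is itself open and affine. Granting this, the proposition reduces to two standard facts: the underlying space of any affine real closed space is quasicompact (since $\sper C$ is a quasicompact spectral space for every ring $C$), and a finite union of quasicompact subspaces is quasicompact.

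First I would fix an arbitrary open affine subspace $\sper B' \subseteq Y = \sper B$. Since the basic opens of $\sper B$ form a basis for its topology, $\sper B'$ is the union of all basic opens $D(g) \subseteq \sper B$ that it contains. Because $\sper B'$ is an affine real closed space it is quasicompact, so already finitely many of these suffice: $\sper B' = D(g_1) \cup \dots \cup D(g_m)$ for suitable $g_1, \dots, g_m \in B$.

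Next I would pull the cover back. Using that preimages commute with unions together with the identity $f^{-1}(D(g)) = D(f^{\#}(g))$,
\[ f^{-1}(\sper B') \;=\; \bigcup_{i=1}^{m} f^{-1}\bigl(D(g_i)\bigr) \;=\; \bigcup_{i=1}^{m} D\bigl(f^{\#}(g_i)\bigr), \]
which exhibits $f^{-1}(\sper B')$ as a finite union of basic open subspaces of $\sper A$. Each $D(f^{\#}(g_i))$ is an open affine real closed space, hence quasicompact, and a finite union of quasicompact subspaces is quasicompact; therefore $f^{-1}(\sper B')$ is quasicompact. Since $\sper B'$ ranged over all open affine subspaces of $Y$, the morphism $f$ is quasicompact.

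I do not expect a genuine obstacle here. The only non-formal input is the quasicompactness of affine real closed spaces (equivalently, of the basic opens $D(h) \subseteq \sper A$), which is a well-known property of real spectra in the weak topology, cf.\ \cite{rcs}; the rest is bookkeeping with the already-established identity $f^{-1}(D(g)) = D(f^{\#}(g))$ and the fact that $D(h)$ is open affine.
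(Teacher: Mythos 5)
Your proposal is correct and follows essentially the same route as the paper: the whole content is the identity $f^{-1}(D(g)) = D(f^{\#}(g))$ established just before the statement, which the paper takes as already proving the proposition. You merely spell out the remaining bookkeeping (covering an arbitrary open affine of $\sper B$ by finitely many basic opens, using quasicompactness of affine real spectra, and noting a finite union of quasicompact sets is quasicompact), which the paper leaves implicit.
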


Luckily, quasicompact morphisms are stable under base change, just as they are in the category of schemes:

\begin{prop}
(\cite{rcs},59) If $f:X \to Y$ is a quasicompact morphism, and $t:T \to Y$ is any morphism, we may form the fiber product 
\begin{center}
\begin{tikzcd}
 T \times_{Y} X \ar[r,"p_X"] \ar[d,"p_T",swap] &  X \ar[d,"f"] \\  T \ar[r,"t"] &  Y.  
\end{tikzcd}
\end{center}
If $f$ is quasicompact, then so is $p_T$.
\end{prop}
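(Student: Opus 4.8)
The plan is to transport the standard scheme-theoretic argument for stability of quasicompact morphisms under base change, with the only genuinely real-closed ingredient being the cocompleteness of $\rcr$ established at the end of Section 1: since pushouts of real closed rings exist, the fibre product of two affine real closed spaces over a third is again affine. Two purely formal facts are used throughout. First, the real spectrum of a ring is quasicompact in the weak topology, and a finite union of quasicompact subspaces is quasicompact. Second, formation of the fibre product $T\times_Y X$ commutes with restriction to an open subspace of $T$ and with restriction to an open subspace of $Y$, and every basic open $D(g)$ of an affine real closed space is itself an affine real closed space.

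First I would reduce to the affine case. By the definition of a quasicompact morphism it suffices to fix an open affine $\sper C \subseteq T$ and show that $p_T^{-1}(\sper C) = \sper C\times_Y X$ is quasicompact, so we may assume $T = \sper C$ is affine. Choose an open affine cover $\{\sper B_k\}_k$ of $Y$; the sets $t^{-1}(\sper B_k)$ form an open cover of the quasicompact space $\sper C$, so after refining by basic opens and passing to a finite subcover we obtain $D(c_1),\ldots,D(c_m)$ with $\sper C = \bigcup_{l=1}^{m} D(c_l)$ and, for each $l$, an open affine $\sper B_{(l)}\subseteq Y$ with $t(D(c_l))\subseteq \sper B_{(l)}$. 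Each $D(c_l)$ is an affine real closed space, and since the fibre product commutes with restriction to the open subspace $\sper B_{(l)}$ of the base,
\[ \sper C\times_Y X \;=\; \bigcup_{l=1}^{m}\bigl(D(c_l)\times_Y X\bigr) \;=\; \bigcup_{l=1}^{m}\bigl(D(c_l)\times_{\sper B_{(l)}}f^{-1}(\sper B_{(l)})\bigr). \]
A finite union of quasicompact subspaces is quasicompact, so it is enough to show each summand is quasicompact. Renaming $D(c_l)$ as $\sper C$, $\sper B_{(l)}$ as $\sper B$, and $f^{-1}(\sper B_{(l)})$ as $W$, we are reduced to proving: if $\sper C$ and $\sper B$ are affine and $W$ is a quasicompact real closed space equipped with morphisms to $\sper B$ from both $\sper C$ and $W$, then $\sper C\times_{\sper B}W$ is quasicompact. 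Here $W$ is indeed quasicompact because $W = f^{-1}(\sper B)$ and $f$ is quasicompact.

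Since $W$ is quasicompact it is covered by finitely many affine open subspaces $\sper A_1,\ldots,\sper A_n$ of $X$ (refine an affine open cover of $W$ by basic opens and extract a finite subcover). As the $\sper A_i$ cover $W$, their preimages under the projection $\sper C\times_{\sper B}W \to W$ cover the fibre product, so
\[ \sper C\times_{\sper B}W \;=\; \bigcup_{i=1}^{n}\bigl(\sper C\times_{\sper B}\sper A_i\bigr). \]
Each $\sper C\times_{\sper B}\sper A_i$ is a fibre product of affine real closed spaces, hence again affine --- it is the real spectrum of the pushout of the corresponding real closed rings, which exists because $\rcr$ is cocomplete --- and the real spectrum of a ring is quasicompact in the weak topology. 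Therefore $\sper C\times_{\sper B}W$ is a finite union of quasicompact subspaces, hence quasicompact, and $p_T$ is quasicompact.

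The main obstacle is not the formal skeleton above but the structural facts packaged into the first paragraph: that a basic open of an affine real closed space is affine (so that the $D(c_l)$ are legitimate affine bases for the fibre products) and that formation of the fibre product is compatible with restriction to open subspaces of the base and of the total space. Both are consequences of the way real closed spaces are glued from affine pieces and of the behaviour of the structure sheaf on constructible subsets developed in Section 1, but they should be recorded explicitly, since the entire reduction rests on them; once they are available, together with cocompleteness of $\rcr$, the argument is purely formal.
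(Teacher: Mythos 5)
Your proof is correct and follows essentially the same route as the paper's: use quasicompactness of $f$ to write $f^{-1}(\sper B)$ as a finite union of affines, observe that fibre products of affine real closed spaces over an affine base are again affine (the paper writes them as $\sper(R_\lambda \otimes_B A_i)$) and hence quasicompact, and conclude by expressing the pullback as a finite union of such pieces. The only difference is organizational: you reduce to an arbitrary open affine $\sper C \subseteq T$ and refine it by basic opens $D(c_l)$ landing in affines of $Y$, which makes explicit the final step the paper handles more loosely with ``since $\sper B$ is arbitrary and each point of $T$ is contained in some $t^{-1}(\sper B)$.''
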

\begin{proof}
We will proceed to show $p_T$ is quasicompact by showing that $T$ has an open cover, and when we restrict the codomain to each affine in the cover, we obtain a quasicompact morphism. Choose an open affine $\sper B$ of $Y$, and base change to obtain the fiber square
\begin{center}
\begin{tikzcd}
 p_{X}^{-1}(f^{-1}(\sper B)) \ar[r] \ar[d]   &  f^{-1}(\sper B) \ar[d]  \\  t^{-1}(\sper B) \ar[r]  &   \sper B.
\end{tikzcd}
\end{center}
since $f$ is quasicompact, we rewrite
\[  f^{-1}(\sper B)=\bigcup_{i=1}^{n} \sper A_i , \]
which allows us to rewrite our diagram as
\begin{center}
\begin{tikzcd}
 p_{X}^{-1}\left(\bigcup_{i=1}^{n} \sper A_i \right) \ar[r] \ar[d]   &  \bigcup_{i=1}^{n} \sper A_i \ar[d]  \\  t^{-1}(\sper B) \ar[r]  &   \sper B.
\end{tikzcd}
\end{center}
Choose an open affine cover $\{\sper R_{\lambda}\}_{\lambda \in \Lambda}$ of $t^{-1}(\sper B)$. For a fixed $\lambda$, we may take the fiber product over 
\begin{center}
\begin{tikzcd}
&  p_{X}^{-1}(\bigcup_{i=1}^{n} \sper A_i) \ar[r] \ar[d]   &  \bigcup_{i=1}^{n} \sper A_i \ar[d]  \\ \sper R_{\lambda} \ar[r] &  t^{-1}(\sper B) \ar[r]  &   \sper B,
\end{tikzcd}
\end{center}
which is
\begin{center}
\begin{tikzcd}
\bigcup_{i=1}^{n} \sper( R_{\lambda} \otimes_{B} A_{i}) \ar[r] \ar[d] &  p_{X}^{-1}(\bigcup_{i=1}^{n} \sper A_i) \ar[r] \ar[d]   &  \bigcup_{i=1}^{n} \sper A_i \ar[d]  \\ \sper R_{\lambda} \ar[r] &  t^{-1}(\sper B) \ar[r]  &   \sper B.
\end{tikzcd}
\end{center}
Since $\sper B$ is arbitrary and each point of $T$ is contained in some $t^{-1}(\sper B)$, we have shown that 
\[  p_{X}^{-1}\left(\bigcup_{i=1}^{n} \sper A_i \right) \to t^{-1}(\sper B) \]
is quasicompact independent of $B$, implying that $p_{T}$ is quasicompact as well. 
\end{proof}

Quasiseparated morphisms are similar; 

\begin{defn}
Let $f:X \to Y$ be a morphism, and consider the fibered square
\begin{center}
\begin{tikzcd}
X \times_{Y} X \ar[r,"p_X"] \ar[d,"p_X",swap] & X \ar[d,"f" ] \\ X \ar[r,"f" swap] & Y.
\end{tikzcd}
\end{center}
The pair of identity morphisms to $X$ give a morphism
\[  \Delta_{f}:X \to X \times_{Y} X \]
called the diagonal morphism of $f$. We say $f$ is \textit{quasiseparated} is $\Delta_f$ is quasicompact.
\end{defn}

\begin{prop}
(\cite{rcs}, 59) Let 
\begin{center}
\begin{tikzcd}
X \ar[r,"f"]  &  Y \ar[r,"g"] &  Z 
\end{tikzcd}
\end{center}
be morphisms of real closed spaces such that $g \circ f$ is quasicompact and $g$ is quasiseparated. Then $f$ is quasiseparated.
\end{prop}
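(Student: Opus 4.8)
The plan is to unwind the definition: $f$ is quasiseparated exactly when its diagonal $\Delta_{f}:X \to X\times_{Y}X$ is quasicompact, so the whole problem reduces to proving $\Delta_{f}$ quasicompact. The organizing tool is the canonical factorization of the diagonal of the composite,
\[ \Delta_{g\circ f}:X \xrightarrow{\ \Delta_{f}\ } X\times_{Y}X \xrightarrow{\ i\ } X\times_{Z}X, \]
where $i$ is the map induced by $g$. First I would record that $i$ sits in a cartesian square with $\Delta_{g}:Y\to Y\times_{Z}Y$ along $f\times_{Z}f:X\times_{Z}X \to Y\times_{Z}Y$: a check on points shows the fibre product of $X\times_{Z}X \to Y\times_{Z}Y \leftarrow Y$ consists of the pairs $(x_{1},x_{2})$ with $f(x_{1})=f(x_{2})$, which is precisely $X\times_{Y}X$. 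Hence $i$ is a base change of $\Delta_{g}$.

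Next I would cash in the hypothesis that $g$ is quasiseparated: by definition this says $\Delta_{g}$ is quasicompact, and since quasicompactness of morphisms is stable under base change (the stability for fibre products of real closed spaces established above), $i$ is quasicompact. Moreover $i$ is a monomorphism, being the base change of the monomorphism $\Delta_{g}$, so its own diagonal is an isomorphism and $i$ is itself quasiseparated. These two facts set up the triple $X \xrightarrow{\Delta_{f}} X\times_{Y}X \xrightarrow{i} X\times_{Z}X$ for the quasicompact cancellation law: if $v\circ u$ is quasicompact and $v$ is quasiseparated then $u$ is quasicompact (this is itself proved by the graph factorization, writing $u$ as its graph, a base change of $\Delta_{v}$, followed by a base change of $v\circ u$). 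Applying it with $u=\Delta_{f}$ and $v=i$, and using $i\circ\Delta_{f}=\Delta_{g\circ f}$, yields $\Delta_{f}$ quasicompact, i.e. $f$ quasiseparated.

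The main obstacle — and the only place genuine work is needed — is to establish that $\Delta_{g\circ f}$ is quasicompact. This is the delicate point, since quasicompactness of a morphism does not by itself force quasicompactness of its diagonal, so the hypothesis that $g\circ f$ is quasicompact must be fed in together with the spectral structure of the affine pieces rather than formally. The line of attack I would pursue is affine-local on $Z$: over each affine $\sper D\subseteq Z$ the question reduces to quasicompactness of the pairwise intersections $U\cap V$ of affine opens $U=\sper A_{1}$, $V=\sper A_{2}$ of $X$ lying in a single fibre $(g\circ f)^{-1}(\sper D)$. Quasicompactness of $g\circ f$ guarantees that this fibre is a quasicompact real closed space, covered by finitely many basic opens, and one would try to bound $U\cap V$ against that finite cover using the constructible (proconstructible) topology on the affine spectra. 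Controlling these intersections is the crux on which the entire cancellation turns, and it is exactly here that the interaction between the hypotheses and the spectral geometry of $\sper(-)$ has to be exploited.
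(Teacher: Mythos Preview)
There is a genuine gap, but its source is a typo in the statement rather than in your reasoning. The paper's own proof does \emph{not} establish that $f$ is quasiseparated; it establishes that $f$ is \emph{quasicompact}. This is the standard cancellation law: if $g\circ f$ is quasicompact and $g$ is quasiseparated, then $f$ is quasicompact. The argument the paper gives is exactly the graph factorization you mention parenthetically: one factors $f$ as
\[
X \xrightarrow{\ \Gamma_{f}\ } X\times_{Z}Y \xrightarrow{\ p_{Y}\ } Y,
\]
observes that $\Gamma_{f}$ is a base change of $\Delta_{g}$ (hence quasicompact, since $g$ is quasiseparated) and that $p_{Y}$ is a base change of $g\circ f$ (hence quasicompact), and concludes that $f$ is quasicompact as a composite of quasicompact morphisms.

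Your instinct that something is wrong was exactly right. You correctly reduced the quasiseparatedness of $f$ to the quasicompactness of $\Delta_{g\circ f}$, and correctly flagged this as the place where ``genuine work is needed''. In fact no amount of work will close that gap: the hypotheses do not force $g\circ f$ to be quasiseparated, and without that the conclusion fails. (Already with $Y=Z$ and $g=\mathrm{id}$, the claim would say every quasicompact real closed space is quasiseparated, which is false for the same reasons as in the scheme world.) The irony is that the ``cancellation law'' you invoke as a lemma---with the graph-factorization proof you sketch for it---\emph{is} the content of the proposition once the typo is corrected; you had the paper's argument in hand all along, just aimed at the wrong target.
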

\begin{proof}
This proof is entirely an application of base change. We have a diagram
\begin{center}
\begin{tikzcd}
    &  Y \ar[d,"g"] \\ X \ar[r,"g \circ f"] &  Z  
\end{tikzcd}
\end{center}
with limit
\begin{center}
\begin{tikzcd}
  X \times_{Z} Y \ar[r,"p_{Y}"] \ar[d] &  Y \ar[d,"g"] \\ X \ar[r,"g \circ f"] &  Z.  
\end{tikzcd}
\end{center}
Since quasicompactness is stable under base change, $p_Y$ is quasicompact. We also have a diagram
\begin{center}
\begin{tikzcd}
 & X \times_{Z} Y \ar[d,"f\times \text{id}_{Y}"]   \\  Y \ar[r,"\Delta_g"]     &   Y \times_{Z} Y   
\end{tikzcd}
\end{center}
with limit 
\begin{center}
\begin{tikzcd}
 X \times_{Y} Y  \ar[r,"p_2"] \ar[d] & X \times_{Z} Y \ar[d,"f\times \text{id}_{Y}"]   \\  Y \ar[r,"\Delta_g"]     &   Y \times_{Z} Y.   
\end{tikzcd}
\end{center}
By the properties of fiber products, $X \times_{Y} Y \cong X$, and the induced morphism from $X$ to $X \times_{Z} Y$ by composing $p_2$ with this isomorphism is the graph $\Gamma_f$ of $f$. Since $\Delta_g$ is quasicompact by assumption, $\Gamma_f$ is quasicompact by base change. Since compositions of quasicompact morphisms are quasicompact,
\begin{center}
\begin{tikzcd}
X \ar[r,"\Gamma_f"] & X \times_{Z} Y \ar[r,"p_{Y}"] & Y  
\end{tikzcd}
\end{center}
is quasicompact and equals $f$, so the claim is proved.
\end{proof}

Quasiseparated morphisms, just like quasicompact morphisms, are closed under composition and base change.

\begin{prop}
(\cite{rcs},61) Quasiseparated morphisms are closed under base change; that is, if $f:X \to Y$ is quasiseparated and $T \to Y$ is any morphism, then $f_{T}:X_{T} \to T$ is quasiseparated, where 
$X_T=X \times_{Y} T$.
\end{prop}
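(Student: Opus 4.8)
The plan is to verify the definition directly: I must show the diagonal $\Delta_{f_T}:X_T \to X_T \times_T X_T$ is quasicompact. The strategy is to exhibit $\Delta_{f_T}$ as a base change of the diagonal $\Delta_f:X \to X \times_Y X$, which is quasicompact by the hypothesis that $f$ is quasiseparated, and then invoke the fact established above that quasicompact morphisms are stable under base change.

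First I would record the standard identification of iterated fiber products. Using the universal property of the fiber product — all the fiber products in question exist, as we have used repeatedly — there is a canonical isomorphism
\[
X_T \times_T X_T \;=\; (X \times_Y T) \times_T (X \times_Y T) \;\cong\; (X \times_Y X) \times_Y T,
\]
since giving a morphism to either side from a real closed space $S$ amounts to the same data: a pair of morphisms $S \to X$ together with a morphism $S \to T$, all compatible over $Y$. Chasing this identification, $\Delta_{f_T}$ is carried to the morphism $\Delta_f \times_Y \text{id}_T : X \times_Y T \to (X \times_Y X) \times_Y T$, because on $S$-points both send $(x,t)$ to $\big((x,x),t\big)$.

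Next I would observe that $\Delta_f \times_Y \text{id}_T$ is itself a base change of $\Delta_f$. Concretely, the square
\begin{center}
\begin{tikzcd}
X_T \ar[r] \ar[d,swap,"\Delta_{f_T}"] & X \ar[d,"\Delta_f"] \\
(X \times_Y X) \times_Y T \ar[r] & X \times_Y X,
\end{tikzcd}
\end{center}
whose horizontal arrows are the evident projections, is Cartesian: the fiber product $X \times_{X \times_Y X} \big((X \times_Y X) \times_Y T\big)$ is canonically $X \times_Y T = X_T$, and the map it induces to the lower-left corner is precisely $\Delta_{f_T}$ under the identification above. Since $f$ is quasiseparated, $\Delta_f$ is quasicompact, and quasicompactness is preserved under base change, so $\Delta_{f_T}$ is quasicompact. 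Hence $f_T$ is quasiseparated.

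The only real content is the bookkeeping in the first two steps — the isomorphism $X_T \times_T X_T \cong (X \times_Y X) \times_Y T$ and the verification that it matches $\Delta_{f_T}$ with $\Delta_f \times_Y \text{id}_T$ — which is a routine exercise in the universal property of fiber products. I expect no obstacle specific to real closed spaces here: the argument uses only the existence of fiber products and the base-change stability of quasicompact morphisms, both available above.
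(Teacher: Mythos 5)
Your proposal is correct and follows essentially the same route as the paper: identify $X_T \times_T X_T$ with $(X \times_Y X) \times_Y T$ via associativity of fiber products, recognize $\Delta_{f_T}$ as a base change of $\Delta_f$, and conclude by stability of quasicompact morphisms under base change. The only difference is presentational (you exhibit the Cartesian square directly with $X_T$ in the corner, while the paper builds it by mapping $X$ in via $\Delta_f$ and invoking the magic square), so nothing further is needed.
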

\begin{proof}
We aim to show that the diagonal $X_{T} \to X_{T} \times_{T} X_{T}$ is quasicompact. First, by associativity of fiber products,
\[ X_T \times_{T} X_T =\left(X \times_{Y} T\right) \times_{T} \left(X \times_{Y} T \right) \cong X \times_{Y} \left(T \times_{T} X \right) \times_{Y} T \cong X \times_{Y} X \times_{Y} T. \]
Consider the fiber product 
\begin{center}
\begin{tikzcd}
X \times_{Y} X \times_{Y} T \ar[r] \ar[d]   &  T \ar[d]  \\    X \times_{Y} X \ar[r]  &     Y,     
\end{tikzcd}
\end{center}
which is isomorphic to 
\begin{center}
\begin{tikzcd}
X_{T} \times_{T} X_{T}  \ar[r] \ar[d]   &  T \ar[d]  \\    X \times_{Y} X \ar[r]  &     Y     
\end{tikzcd}
\end{center}
via the unique diagram isomorphism from associativity. If we then map into this diagram using the diagonal
\begin{center}
\begin{tikzcd}
&  X_{T} \times_{T} X_{T}  \ar[r] \ar[d]   &  T \ar[d]  \\   X \ar[r,"\Delta_{f}"] &   X \times_{Y} X \ar[r]  &     Y,    
\end{tikzcd}
\end{center}
we see the fiber product is 
\begin{center}
\begin{tikzcd}
X \times_{X} \left(X \times_{Y} T \right) \ar[r] \ar[d]  &  X_{T} \times_{T} X_{T}  \ar[r] \ar[d]   &  T \ar[d]  \\   X \ar[r,"\Delta_{f}"] &   X \times_{Y} X \ar[r]  &     Y   
\end{tikzcd}
\end{center}
by the magic square diagram, which is uniquely isomorphic to 
\begin{center}
\begin{tikzcd}
X_{T} \ar[r] \ar[d] &  X_{T} \times_{T} X_{T}  \ar[r] \ar[d]   &  T \ar[d]  \\   X \ar[r,"\Delta_{f}"] &   X \times_{Y} X \ar[r]  &     Y .  
\end{tikzcd}
\end{center}
Since $X_{T} \to X_{T} \times_{T} X_{T}$ is the base change of a quasicompact morphism, it is quasicompact, so $f_{T}$ is quasiseparated. 
\end{proof}

\begin{prop}
(\cite{rcs},61) In the composition 
\begin{center}
\begin{tikzcd}
X \ar[r,"f"] & Y \ar[r,"g"] & Z,
\end{tikzcd}
\end{center}
if $f$ and $g$ are quasiseparated, then so is $g \circ f$.
\end{prop}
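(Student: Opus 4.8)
The plan is to show that the diagonal morphism $\Delta_{g \circ f} : X \to X \times_Z X$ is quasicompact, which is exactly what it means for $g \circ f$ to be quasiseparated. The idea, mirroring the arguments already used in this section, is to factor $\Delta_{g \circ f}$ through $X \times_Y X$ and recognize the two factors as $\Delta_f$ (which is quasicompact because $f$ is quasiseparated) and a base change of $\Delta_g$ (which is quasicompact because $g$ is quasiseparated).

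First I would produce the natural morphism $\delta : X \times_Y X \to X \times_Z X$. The two projections $X \times_Y X \to X$ become equal after composing with $g \circ f$, since they are already equal after composing with $f$, so the universal property of the fiber product $X \times_Z X$ yields $\delta$. Composing with the diagonal $\Delta_f : X \to X \times_Y X$, one checks that both projections $X \times_Z X \to X$ pull back along $\delta \circ \Delta_f$ to $\mathrm{id}_X$, so by uniqueness in the universal property of $X \times_Z X$ we get
\[ \delta \circ \Delta_f = \Delta_{g \circ f}. \]

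Next I would identify $\delta$ as the base change of $\Delta_g : Y \to Y \times_Z Y$ along $f \times_Z f : X \times_Z X \to Y \times_Z Y$; that is, there is a cartesian square
\begin{center}
\begin{tikzcd}
X \times_Y X \ar[r,"\delta"] \ar[d] & X \times_Z X \ar[d,"f \times_Z f"] \\ Y \ar[r,"\Delta_g"] & Y \times_Z Y.
\end{tikzcd}
\end{center}
This is the assertion that $(X \times_Z X) \times_{Y \times_Z Y} Y \cong X \times_Y X$, which is a routine diagram chase with the universal properties of fiber products, of exactly the same flavor as the associativity and "magic square" manipulations used in the proofs of the preceding propositions. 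Granting it, the stability of quasicompact morphisms under base change (established earlier in this section) shows $\delta$ is quasicompact, since $\Delta_g$ is quasicompact by hypothesis.

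Finally, $\Delta_f$ is quasicompact because $f$ is quasiseparated, and quasicompact morphisms are closed under composition, so $\Delta_{g \circ f} = \delta \circ \Delta_f$ is quasicompact; hence $g \circ f$ is quasiseparated. The only real obstacle is the bookkeeping in verifying the cartesian square above, i.e.\ pinning down the isomorphism $X \times_Y X \cong (X \times_Z X) \times_{Y \times_Z Y} Y$, but this is entirely analogous to the fiber-product juggling already performed for the base-change and composition statements for quasicompact and quasiseparated morphisms.
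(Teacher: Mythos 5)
Your argument is essentially the paper's own proof: both factor $\Delta_{g\circ f}$ as $\Delta_f$ followed by the map $X\times_Y X\to X\times_Z X$ obtained as the base change of $\Delta_g$ along $f\times_Z f$, then invoke stability of quasicompactness under base change and composition. The proposal is correct, and if anything states the cartesian square and the identity $\delta\circ\Delta_f=\Delta_{g\circ f}$ a bit more explicitly than the paper does.
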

\begin{proof}
We aim to show $\Delta_{g \circ f}:X \to X \times_{Z} X$ is quasicompact. By the universal property of fiber products, we have a morphism
\[  X \times_{Z} X \to Y \times_{Z} Y \]
induced by $f \times f$. We then have a diagram
\begin{center}
\begin{tikzcd}
   X \times_{Z} X \ar[r,"f \times f"]   &  Y \times_{Z} Y  & Y \ar[l,"\Delta_g"]
\end{tikzcd}
\end{center}
and the fiber product is
\begin{center}
\begin{tikzcd}
 X \times_{Y} X \ar[r] \ar[d] &   Y \ar[d,"\Delta_g"] \\   X \times_{Z} X \ar[r,"f \times f"]   &  Y \times_{Z} Y.  
\end{tikzcd}
\end{center}
The morphism $X \times_{Y} X \to X \times_{Z} X$ is the base change of $\Delta_g$, so it is quasicompact. We map $X$ to this square using
\begin{center}
\begin{tikzcd}
X \ar[rrd,"f"] \ar[rdd,"\Delta_{g \circ f}", swap] & & \\ & X \times_{Y} X \ar[r] \ar[d] &   Y \ar[d,"\Delta_g"] \\  & X \times_{Z} X \ar[r,"f \times f"]   &  Y \times_{Z} Y,  
\end{tikzcd}
\end{center}
and the unique induced morphism is in fact $\Delta_f$
\begin{center}
\begin{tikzcd}
X \ar[rd,dotted] \ar[rrd,"f"] \ar[rdd,"\Delta_{g \circ f}", swap] & & \\ & X \times_{Y} X \ar[r] \ar[d] &   Y \ar[d,"\Delta_g"] \\  & X \times_{Z} X \ar[r,"f \times f"]   &  Y \times_{Z} Y.  
\end{tikzcd}
\end{center}
We have now demonstrated $\Delta_{g \circ f}=(f \times f) \circ \Delta_{f}$, and since a composition of quasicompact morphisms is quasicompact, we are done.
\end{proof}

While the proofs are slightly different in some cases, pages 58-64 of \cite{rcs} give theorems on the intersections of quasicompact open sets in quasiseparated real closed spaces, in addition to some generalizations of theorems which hold for schemes to locally proconstructible spaces. All of which is to say, while the topology of real closed spaces is quite different from (and much finer than) that of schemes, the categorical properties are quite similar. Since quasiseparated, separated, quasicompact, and finite type morphisms are all morphism properties given in terms of diagrams, they look very similar in the scheme and real closed category. The situation is in sharp contrast with the definition of a regular morphism (\cite{rcs},67), which is unrelated to the notion of a regular local ring.

\subsection{A Deep Example: Semialgebraic Geometry \& Real Closed Spaces}

Some of the best behaved real closed spaces come from familiar classical geometric objects. 

Let $k$ be a real closed field, and let $\Phi(t_{1},\hdots,t_{n})$ be a sentence in $n$ variables in the language $\lorf(k)$. We may consider the sentence $\Phi$ in the language $\lorf(k[t_1 , \hdots,t_n])$, and by Section 1.5, there is an associated real closed space $K_{\Phi} \hookrightarrow \sper k[t_1 , \hdots,t_n]$. The closed points of $K_{\Phi}$ correspond to $k$-algebra homomorphisms $k[t_1 , \hdots,t_n] \to k$, or maximal convex ideals $\textbf{p} \in \text{Spec } k[t_1,\hdots,t_n]$. All maximal convex ideals of $k[t_1,\hdots,t_n]$ are ideals of the form $\langle t_1-a_1, \hdots, t_n-a_n \rangle$ for $(a_1,\hdots,a_n) \in k^{n}$. Hence the closed points of $K_\Phi$ correspond to the points in the real vector space $k^{n}$.

We can say more about real closed spaces $K_{\Phi} \hookrightarrow \sper k[t_1 , \hdots,t_n]$. Suppose $g(t_1,\hdots,t_n)>0$ is an atomic formula in $\Phi$, written in the language $\lorf(k[t_1 , \hdots,t_n])$. For any closed point $x \in K_{\Phi}$, the kernel $\textbf{p}$ of $x:k[t_1,\hdots,t_n] \to k$ is given by a vector $(a_1,\hdots,a_n) \in k^{n}$. By definition, 
\[ x \models g(t_1,\hdots,t_n)>0,  \]
and rephrasing for ease of understanding,
\[ g(x(t_1),\hdots,x(t_n))>0  \]
with $x(t_i) \in k$. We then know $x(t_i - a_i)=x(t_i)-a_i=0$, so 
\[ g(a_1,\hdots,a_n)>0,  \]
and each atomic formula gives a polynomial (in)equality describing a region of $k^n$. Since the atomic formula here can be replaced with any other atomic formula in $\Phi$, the real closed space $K_{\Phi}$ corresponds to the semialgebraic space defined by $\Phi$ in $k^{n}$.

Let's refer to real closed spaces like $K_{\Phi}$ as \textit{semialgebraic spaces}. Semialgebaic spaces are quasicompact and quasiseparated, and it is not difficult to prove. Perhaps the most important part about these spaces is that the ambiguity present in deciding which real closed field a subspace should be defined over evaporates. 

If $X$ is a real closed space, a curve in $X$ should be a 'one dimensional' subspace $C \hookrightarrow X$, but since $X$ may not be equidimensional, defining a curve this way is annoying. Instead, we prefer to think of a curve as a morphism
\[ \gamma:[0,1] \to X, \]
but what exactly do we mean by $[0,1]$? We may define $[0,1]$ over $\R_{0}$ using the sentence 
\[  \exists t:0 \leq t \leq 1  \]
in the language $\lorf(\R_{0}[t])$. Since we may replace $\R_{0}$ with any ring here, which ring should we choose to define our interval over?

With semialgebraic spaces, the choice is obvious. Suppose $K_{\Phi} \hookrightarrow \sper k[t_1 , \hdots,t_n]$ is a semialgebraic space. Define $[0,1]_{k}$ using the sentence 
\[  \exists t:0 \leq t \leq 1  \]
in the language $\lorf(k[t])$. If $C \subset K_{\Phi}$ is a subspace of dimension one, fix a closed point $x \in C \subset K_{\Phi}$, then let $W_{x}^{C}$ be the local subspace of $x$ in the curve $C$. We then have a commutative triangle describing $C$:
\begin{center}
\begin{tikzcd}
 \displaystyle [0,1]_{k}  \ar[rd] \ar[rr,"\gamma"] & & K_{\Phi} \ar[ld] \\ & \sper(k), &
\end{tikzcd}
\end{center}
where $\gamma$ factors through $C$. If the closed point $x$ is in the image of $\gamma$, there is a number $z \in k$ between 0 and 1 such that 
\[ \gamma\lvert_{W_z}:W_{z} \to W_{x}^{C} \]
is an isomorphism. Such a morphism $\gamma$ is like a local parameterization of $C$ in a neighborhood of $x$.

\subsection{The Real \'{E}tale Site}

Now, to form a site from the category of real closed spaces, we need some notion of \'{e}tale morphism for real closed spaces. To begin, we introduce finite type morphisms in the real closed category. This choice allows us strong control over the images of quasicompact morphisms of real closed spaces, while also making the model theory of real closed spaces an effective tool in translating between algebra (i.e. sheaves) and geometry of these spaces.

\begin{defn}
A morphism $f:X \to Y$ is \textit{locally of finite type} if for each point $x \in X$, there are open affine subspaces $V$ containing $f(x)$ and $U$ containing $x$ such that
\begin{center}
\begin{tikzcd}
U \ar[r,"f\lvert_{U}"] \ar[rd, "i_{U,V}" hook, swap] & V \\ & V \times \sper \R_{0}[t_1,\hdots,t_n] \ar[u,"\pi_{V}" swap]
\end{tikzcd}
\end{center}
commutes, where $\pi_V$ is projection from the fiber product over $\sper \R_{0}$, and $i_{U,V}$ is a monomorphism. $f$ is \textit{finite type} if it is locally of finite type and quasicompact.
\end{defn}

\begin{remark}
If $U \cong \sper A$ and $V \cong \sper B$ are affine, assume $A$ and $B$ are real closed. Since $B$ is an algebra over $\R_{0}$, we have 
\begin{align*}
V \times \sper \R_{0}[t_1,\hdots,t_n] = & \sper B \times \sper \R_{0}[t_1,\hdots,t_n] \\ \cong & \sper B \otimes_{\R_{0}} \R_{0}[t_1,\hdots,t_n] \\ \cong & \sper B[t_1,\hdots,t_n].
\end{align*}
Although the ring $B[t_1,\hdots,t_n]$ is not real closed, it is convenient to write its real spectrum as $\sper B[t_1,\hdots,t_n]$ to define $U$ as a constuctible subset, since we'd like to use the variables $t_i$ to write the sentence $\Phi_{U}$ which defines $U$ in $V \times \sper \R_{0}[t_1,\hdots,t_n]$.
\end{remark}

\begin{remark}
If $f:X \to Y$ is finite type, consider the factorization $X \to f(X) \to Y$ of $f$. Let $V$ be a nonempty open in $Y$, and choose open affine $U$ such that we have a commutative triangle 
\begin{center}
\begin{tikzcd}
U \ar[r,"f\lvert_{U}"] \ar[rd, hook] & V \\ & V \times \sper \R_{0}[t_1,\hdots,t_n]. \ar[u]    
\end{tikzcd}
\end{center}
We then have a commutative diagram
\begin{center}
\begin{tikzcd}
U \ar[r] \ar[rrd] & f(X) \ar[r,hook] & Y \\ & & V, \ar[u,hook]  
\end{tikzcd}
\end{center}
which yields a morphism 
\begin{center}
\begin{tikzcd}
U \ar[r] \ar[rd,dotted]  & f(X) \ar[r,hook] & Y \\ & f(X) \times_{Y} V \ar[u,hook] \ar[r,hook] & V. \ar[u,hook]  
\end{tikzcd}
\end{center}
We consequently know $U$ maps to $V$ in two ways;
\begin{center}
\begin{tikzcd}
U \ar[rr] \ar[rd,hook] &  &  f(X)\times_{Y}V \ar[d]  \\ & V \times \sper \R_{0}[t_1,\hdots,t_n] \ar[r]  & V. 
\end{tikzcd}
\end{center}
We may then map to the fiber product 
\begin{center}
\begin{tikzcd}
U \ar[r,dotted] \ar[rd,hook] & f(X)\times_{Y}V \times  \sper \R_{0}[t_1,\hdots,t_n] \ar[r] \ar[d]  &  f(X)\times_{Y} V \ar[d]  \\ & V \times \sper \R_{0}[t_1,\hdots,t_n] \ar[r]  & V, 
\end{tikzcd}
\end{center}
and since the morphism $U \to V \times \sper \R_{0}[t_1,\hdots,t_n]$ is a monomorphism, $U \to f(X)\times_{Y}V \times  \sper \R_{0}[t_1,\hdots,t_n]$ is as well. We know $f(X)\times_{Y}V \cong f(X) \cap V$, so in conclusion 
\begin{center}
\begin{tikzcd}
U \ar[r] \ar[rd,hook] & f(X) \cap V \\ & (f(X) \cap V)\times \sper \R_{0}[t_1,\hdots,t_n] \ar[u,"\pi_{V}\lvert_{f(X)}" swap]
\end{tikzcd}
\end{center}
commutes, and $X \to f(X)$ is a finite type surjection. We may thus check whether a morphism is finite type by checking if it is finite type onto its image!
\end{remark}

The previous remark essentially proves that if $X \to Y \to Z$ is locally of finite type, so is $X \to Y$
\begin{prop}
(\cite{rcs},90-91) If the composition $g \circ f$ of the morphisms $f:X \to Y$ and $g:Y \to Z$ is locally of finite type, then so is $f$    
\end{prop}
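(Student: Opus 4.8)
The plan is to argue pointwise and transport across a magic-square isomorphism the finite-type data that $g\circ f$ already carries. Fix $x\in X$ and put $y=f(x)$ and $z=g(y)$. Since $g\circ f$ is locally of finite type, there are open affine subspaces $W\subseteq Z$ with $z\in W$ and $U_0\subseteq X$ with $x\in U_0$, together with a monomorphism $\iota\colon U_0\hookrightarrow W\times\sper\R_0[t_1,\hdots,t_n]$ satisfying $\pi_W\circ\iota=(g\circ f)|_{U_0}$, where $\pi_W$ is the projection to $W$. Since $g$ is continuous, $g^{-1}(W)$ is open and contains $y$, so I would pick an open affine $V\subseteq g^{-1}(W)$ with $y\in V$; and since $f$ is continuous, $U_0\cap f^{-1}(V)$ is an open neighbourhood of $x$ inside the affine real closed space $U_0$, hence contains an open affine $U\ni x$ because real closed spaces are locally affine. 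Passing from $U_0$ to $U$ is harmless: $\iota|_U\colon U\hookrightarrow W\times\sper\R_0[t_1,\hdots,t_n]$ is still a monomorphism, being the composite of the open immersion $U\hookrightarrow U_0$ (a monomorphism, exactly as for open subspaces) with $\iota$, and $\pi_W\circ\iota|_U=(g\circ f)|_U$ still holds.

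Next I would produce the factorisation the definition demands. The maps $f|_U\colon U\to V$ and $\iota|_U\colon U\to W\times\sper\R_0[t_1,\hdots,t_n]$ agree after composing down to $W$, because $(g|_V)\circ(f|_U)=(g\circ f)|_U=\pi_W\circ\iota|_U$, so they induce a morphism
\[ j_0\colon U\longrightarrow V\times_W\bigl(W\times_{\sper\R_0}\sper\R_0[t_1,\hdots,t_n]\bigr). \]
The magic square identifies the target with $V\times_{\sper\R_0}\sper\R_0[t_1,\hdots,t_n]=V\times\sper\R_0[t_1,\hdots,t_n]$, its projection to $V$ becoming $\pi_V$ and its projection to $W\times\sper\R_0[t_1,\hdots,t_n]$ becoming $g|_V\times\text{id}$. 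Composing $j_0$ with this isomorphism gives $j\colon U\to V\times\sper\R_0[t_1,\hdots,t_n]$ with $\pi_V\circ j=f|_U$ and $(g|_V\times\text{id})\circ j=\iota|_U$. Thus the triangle in the definition of ``locally of finite type'' commutes for $f$ at $x$, and $j$ is a monomorphism because $(g|_V\times\text{id})\circ j=\iota|_U$ is one and a morphism that becomes a monomorphism after post-composition is already a monomorphism. Since $x$ was arbitrary, $f$ is locally of finite type.

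I expect the only fussy points to be bookkeeping rather than anything conceptual: checking that restricting $\iota$ to a smaller open affine keeps it a monomorphism, and checking that the magic-square isomorphism intertwines the projections as stated. Both follow from standard facts already in play — open immersions are monomorphisms, monomorphisms are closed under composition and left-cancellable, and $V\times_W(W\times_S T)\cong V\times_S T$ respects the evident projections — so there is no real obstacle beyond a careful diagram chase. (One could alternatively phrase the whole argument globally via the graph $\Gamma_f\colon X\to X\times_Z Y$ and the remark that finite type may be checked onto the image, but the local computation above avoids having to first establish stability of ``locally of finite type'' under base change.)
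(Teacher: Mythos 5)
Your proposal is correct and follows essentially the same route as the paper's (very terse) proof: both take the finite-type triangle that $g\circ f$ provides, induce a map from $U$ into a fiber product involving the $\sper \R_{0}[t_1,\hdots,t_n]$ factor, and conclude that this induced map is a monomorphism by left-cancellation, exactly as in the paper's preceding remark on factoring through the image. Your version merely supplies the bookkeeping the paper leaves implicit — shrinking to an open affine $V\subseteq g^{-1}(W)$ and an open affine $U\ni x$, and identifying $V\times_{W}\bigl(W\times_{\sper\R_{0}}\sper\R_{0}[t_1,\hdots,t_n]\bigr)$ with $V\times\sper\R_{0}[t_1,\hdots,t_n]$ — which is a welcome tightening rather than a different argument.
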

\begin{proof}
Analogously to the argument above, an open $U$ of $X$ maps to $Y \times_{Z} V$ for some open $V \subseteq Z$, and this factors through $Y \times_{Z} V \times \sper \R_{0}[t_1,\hdots,t_n]$ via a monomorphism followed by a projection.
\end{proof}

\begin{prop}
(\cite{rcs},90-91) If $f:X \to Y$  and $g:Y \to Z$ are locally of finite type, then so is $g \circ f$.
\end{prop}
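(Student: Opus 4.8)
The plan is to prove the statement locally on $X$, complementing the preceding proposition by running the factorizations in the forward direction. Fix a point $x \in X$; I must produce open affine neighborhoods of $x$ and of $g(f(x))$ witnessing that $g \circ f$ is locally of finite type at $x$. Since $f$ is locally of finite type, I would first extract open affine subspaces $U_1 \ni x$ and $V_1 \ni f(x)$, a natural number $m$, and a monomorphism $U_1 \hookrightarrow V_1 \times \sper \R_0[t_1,\ldots,t_m]$ whose composite with the projection $\pi_{V_1}$ off the fiber product over $\sper \R_0$ equals $f|_{U_1}$. Applying the same definition to $g$ at the point $f(x) \in Y$ yields open affines $V_2 \ni f(x)$ and $W \ni g(f(x))$, a natural number $k$, and a monomorphism $V_2 \hookrightarrow W \times \sper \R_0[s_1,\ldots,s_k]$ with $\pi_W$-composite equal to $g|_{V_2}$.

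Next I would pass to a common affine. Since $V_1$ is an affine real closed space, its basic open subsets are themselves affine real closed spaces (as observed in the discussion of basic opens preceding Proposition 64) and form a basis for its topology, so there is a basic open affine $V$ with $f(x) \in V \subseteq V_1 \cap V_2$; shrinking once more, choose an open affine $U$ with $x \in U \subseteq U_1 \cap f^{-1}(V)$. Restricting the first monomorphism to $U$ and using that the open immersion $V \hookrightarrow V_1$ is a monomorphism, I get a monomorphism $i \colon U \hookrightarrow V \times \sper \R_0[t_1,\ldots,t_m]$ with $\pi_V \circ i = f|_U$ (it factors through $V \times \sper\R_0[t_1,\ldots,t_m]$ precisely because its $V_1$-component is $f|_U$, which lands in $V$). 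Likewise, restricting the second monomorphism to $V \subseteq V_2$ gives a monomorphism $j \colon V \hookrightarrow W \times \sper \R_0[s_1,\ldots,s_k]$ with $\pi_W \circ j = g|_V$.

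Then I would splice the two together. Base-changing $j$ along $\sper \R_0[t_1,\ldots,t_m] \to \sper \R_0$ produces a monomorphism
\[ j \times \operatorname{id} \colon V \times \sper \R_0[t_1,\ldots,t_m] \hookrightarrow \big(W \times \sper \R_0[s_1,\ldots,s_k]\big) \times \sper \R_0[t_1,\ldots,t_m], \]
and by associativity of fiber products over $\sper \R_0$ together with the identification $\sper \R_0[s_1,\ldots,s_k] \times \sper \R_0[t_1,\ldots,t_m] \cong \sper \R_0[s_1,\ldots,s_k,t_1,\ldots,t_m]$ (coming from $\R_0[s]\otimes_{\R_0}\R_0[t] \cong \R_0[s,t]$, as in the remark following the definition of locally of finite type) the target is isomorphic to $W \times \sper \R_0[s_1,\ldots,s_k,t_1,\ldots,t_m]$. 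Composing with $i$, and using that a composite of monomorphisms is a monomorphism, I obtain a monomorphism $U \hookrightarrow W \times \sper \R_0[s_1,\ldots,s_k,t_1,\ldots,t_m]$. To finish I would verify that post-composing with the projection to $W$ recovers $(g\circ f)|_U$: the projection off an iterated fiber product factors through the intermediate projections, so this reduces to the identities $\pi_W \circ j = g|_V$ and $\pi_V \circ i = f|_U$ already in hand, giving $g|_V \circ f|_U = (g\circ f)|_U$. Since $x$ was arbitrary, $g \circ f$ is locally of finite type.

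The diagram chases here are routine; the single step that needs genuine care is the shrinking, where I must be sure that restricting the monomorphism $U_1 \hookrightarrow V_1 \times \sper \R_0[t_1,\ldots,t_m]$ to the smaller affine $U$ still lands in, and remains a monomorphism into, $V \times \sper \R_0[t_1,\ldots,t_m]$, and that all the projections stay compatible after these restrictions and after the fiber-product identifications. This is exactly the bookkeeping (open immersions are monomorphisms, monomorphisms compose and left-cancel, projections off iterated fiber products decompose) that makes ``locally of finite type'' stable under composition, and it is the only real obstacle.
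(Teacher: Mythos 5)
Your argument is correct and follows essentially the same route as the paper: extract the two finite-type factorizations, base-change the second along the extra affine factor, identify $\sper \R_0[s_1,\ldots,s_k] \times \sper \R_0[t_1,\ldots,t_m]$ with $\sper \R_0[s_1,\ldots,s_k,t_1,\ldots,t_m]$, and compose monomorphisms to land in $W \times \sper \R_0[t_1,\ldots,t_{n+m}]$. The only difference is that you spell out the shrinking to a common affine neighborhood, which the paper compresses into the parenthetical ``shrink $V$ here if necessary.''
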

\begin{proof}
If $g$ is locally finite type, there is a natural $m \in \N$ and an open $V \subseteq Y$ mapping to a nonempty open $W \subseteq Z$ such that 
\begin{center}
\begin{tikzcd}
V \ar[r,"g\lvert_{V}"] \ar[rd,hook] & W \\ & W \times \sper \R_{0}[t_1,\hdots,t_m] \ar[u]
\end{tikzcd}
\end{center}
commutes. If $f$ is locally of finite type, there is a natural $n$, an open $U \subseteq X$ and open $V \subseteq Y$ (shrink $V$ here if necessary) such that 
\begin{center}
\begin{tikzcd}
U \ar[r,"f\lvert_{U}"] \ar[rd,hook] & V \\ & V \times \sper \R_{0}[t_1,\hdots,t_n] \ar[u]
\end{tikzcd}
\end{center}
commutes. Taking the product of $V \hookrightarrow W \times \sper \R_{0}[t_1,\hdots,t_m]$ with $\sper \R_{0}[t_1,\hdots,t_n]$ yields a diagram
\begin{center}
\begin{tikzcd}
V \ar[r,hook] & W \times \sper \R_{0}[t_1,\hdots,t_m] \\ V \times \sper \R_{0}[t_1,\hdots,t_n] \ar[r,hook] \ar[u,"\pi_V"] & W \times \sper \R_{0}[t_1,\hdots,t_{n+m}]. \ar[u]
\end{tikzcd}
\end{center}
We consequently have a commutative triangle
\begin{center}
\begin{tikzcd}
U \ar[rd,hook] \ar[r,"(g \circ f)\lvert_{U}"] & W \\ & W \times \sper \R_{0}[t_1,\hdots,t_{n+m}], \ar[u,"\pi_{W}"swap]
\end{tikzcd}
\end{center}
concluding this proof.
\end{proof}

\begin{prop}
(\cite{rcs},90-91) If $f:X \to Y$ is locally of finite type and $t:T \to Y$ is an arbitrary morphism, $f_T:X_{T} \to T$ is locally of finite type.
\end{prop}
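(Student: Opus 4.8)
The plan is to mimic the base-change arguments already used for quasicompactness and quasiseparatedness: the property ``locally of finite type'' is local on both source and target, so it suffices to exhibit, around an arbitrary point of $X_T$, an open affine on which $f_T$ factors as a monomorphism into an affine space over $T$ followed by the projection.

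First I would record the defining fiber square

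\begin{center}
\begin{tikzcd}
X_T \ar[r,"p_X"] \ar[d,"f_T" swap] & X \ar[d,"f"] \\ T \ar[r,"t" swap] & Y,
\end{tikzcd}
\end{center}

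recalling that fiber products of real closed spaces exist and that a fiber product of affine real closed spaces is again affine (its coordinate ring is the real closure of the tensor product of the coordinate rings, exactly as in the Remark following the definition of finite type). Fix $x' \in X_T$ and set $x = p_X(x')$, $t' = f_T(x')$ and $y = f(x) = t(t')$. Since $f$ is locally of finite type, there are open affines $U \ni x$ in $X$ and $V \ni y$ in $Y$ with $f(U) \subseteq V$, together with a commutative triangle presenting $f|_U$ as a monomorphism $U \hookrightarrow V \times \sper \R_{0}[t_1,\hdots,t_n]$ followed by the projection $\pi_V$. Using continuity of $t$ and the fact that open affines form a basis, choose an open affine $W \ni t'$ in $T$ with $t(W) \subseteq V$.

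Next I would form $U' := U \times_V W$. Because $f(U) \subseteq V$ and $t(W) \subseteq V$, associativity of fiber products (the ``magic square'') identifies $U'$ with the open subspace $p_X^{-1}(U) \cap f_T^{-1}(W)$ of $X_T$; in particular $U'$ is an open affine subspace of $X_T$ containing $x'$, and $f_T|_{U'}$ is the projection $U' \to W$. Now base-change the triangle for $f|_U$ along $W \to V$. Since $V \times \sper \R_{0}[t_1,\hdots,t_n]$ is the fiber product of $V$ and $\sper \R_{0}[t_1,\hdots,t_n]$ over $\sper \R_{0}$, associativity gives $(V \times \sper \R_{0}[t_1,\hdots,t_n]) \times_V W \cong W \times \sper \R_{0}[t_1,\hdots,t_n]$, while $V \times_V W \cong W$; hence the pulled-back triangle reads

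\begin{center}
\begin{tikzcd}
U' \ar[r,"f_T|_{U'}"] \ar[rd,hook] & W \\ & W \times \sper \R_{0}[t_1,\hdots,t_n]. \ar[u,"\pi_W" swap]
\end{tikzcd}
\end{center}

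Since monomorphisms are stable under base change, the slanted arrow $U' \hookrightarrow W \times \sper \R_{0}[t_1,\hdots,t_n]$ is again a monomorphism. As $x'$ was arbitrary, this exhibits $f_T$ as locally of finite type.

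The hard part will be purely the bookkeeping of fiber products: verifying that $U \times_V W$ really is the open affine subspace $p_X^{-1}(U) \cap f_T^{-1}(W)$ of $X_T$ (so the triangle we build genuinely lives over a neighbourhood of $x'$), and that base change along $W \to V$ sends $V \times \sper \R_{0}[t_1,\hdots,t_n]$ to $W \times \sper \R_{0}[t_1,\hdots,t_n]$ compatibly with the two projections. Both are instances of associativity of fiber products and of the preservation of monomorphisms under pullback, just as in the earlier base-change proofs, so no new geometric input beyond the existence and affineness of fiber products of real closed spaces is required.
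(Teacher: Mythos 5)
Your proof is correct and follows essentially the same route as the paper: base-changing the defining triangle for $f|_U$ along $t$ and checking the resulting fiber-product isomorphisms, which the paper explicitly leaves to the reader. In fact you are slightly more careful than the paper's sketch, since by choosing an affine $W \subseteq T$ with $t(W) \subseteq V$ you ensure the target of the base-changed triangle is affine as the definition requires, whereas the paper works with $t^{-1}(V)$ directly.
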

\begin{proof}
The idea here is since 'locally of finite type' is a property given in terms of commutative diagrams, base changing the diagram 
\begin{center}
\begin{tikzcd}
& U \ar[ld,hook] \ar[r,hook] \ar[d] & X \ar[d,"f"] \\ V \times \sper \R_{0}[t_1,\hdots,t_n] \ar[r,"\pi_V"]  & V \ar[r,hook] &  Y
\end{tikzcd}
\end{center}
over $Y$ by the morphism $t$ gives the desired commutative triangle 
\begin{center}
\begin{tikzcd}
 \pi_{X}^{-1}(U) \ar[r] \ar[rd,hook] & t^{-1}(V)  \\  & t^{-1}(V) \times \sper \R_{0}[t_1,\hdots,t_n]. \ar[u,"\pi_{t^{-1}(V)}" swap] 
\end{tikzcd}
\end{center}
I leave it to the reader to check the relevant fiber product isomorphisms.
\end{proof}

\begin{defn}
A morphism $f:X \to Y$ of real closed spaces is \textit{real \'{e}tale} if $f$ is a quasicompact and quasiseparated morphism, and a local isomorphism which is locally of finite type.
\end{defn}

As with the usual definition of \'{e}tale, we need real \'{e}tale morphisms to be closed under composition and base change, as well as have left cancellation.

\begin{prop}
Let $f:X \to Y$, $g:Y \to Z$ and $t:T \to Y$ be morphisms of real closed spaces. Then 
\begin{enumerate}
\item If $f \, \text{and } \, g$ are real \'{e}tale, then so is $g \circ f$
\item If $f$ is real \'{e}tale and $t$ is arbitrary, then the induced morphism $X \times_{Y} T \to T$ is real \'{e}tale
\item If $g \circ f$ and $g$ are both real \'{e}tale, then so is $f$.
\end{enumerate}
\end{prop}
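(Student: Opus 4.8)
The plan is to use the fact that \emph{real \'etale} is the conjunction of four morphism properties --- quasicompact, quasiseparated, local isomorphism, and locally of finite type --- and to check that each of these four is stable under each of the three operations: composition (for (1)), base change (for (2)), and the relevant left cancellation (for (3)); then (1)--(3) follow by assembling. For quasicompact, quasiseparated, and locally of finite type, almost all of this is already in the excerpt: composition- and base-change-stability of quasicompact morphisms are the remark ``compositions of quasicompact morphisms are quasicompact'' together with the proposition showing $p_T$ is quasicompact; the analogous facts for quasiseparated and for locally-of-finite-type morphisms are the propositions proved just above; and the left-cancellation clause ``$g\circ f$ locally of finite type $\Rightarrow f$ locally of finite type'' is also proved above. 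So the genuinely new content is (a) the three statements for \emph{local isomorphisms}, and (b) the left-cancellation clauses for quasicompact and for quasiseparated morphisms needed in (3).

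For the local-isomorphism clauses I would argue directly with the open covers witnessing the property. For (1): if $\{U_i\}$ covers $X$ with $f|_{U_i}\colon U_i\xrightarrow{\sim}V_i$ open in $Y$, and $\{W_j\}$ covers $Y$ with $g|_{W_j}\colon W_j\xrightarrow{\sim}Z_j$ open in $Z$, then $\{U_i\cap f^{-1}(W_j)\}$ covers $X$ and $g\circ f$ restricts to an isomorphism onto an open of $Z$ on each piece. For (2): pulling the fibre square defining $X\times_Y T$ back along an open $V\subseteq Y$ on which $f$ is an isomorphism produces a fibre square in which one edge is an isomorphism, so $X\times_Y T\to T$ is an isomorphism over the open $t^{-1}(V)$; as $f$ is a local isomorphism these opens $t^{-1}(V)$ cover $X\times_Y T$. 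For (3): given $x\in X$, choose an open $W\ni f(x)$ with $g|_W$ an isomorphism onto an open of $Z$ and an open $U\ni x$ with $(g\circ f)|_U$ an isomorphism onto an open of $Z$; shrinking $U$ inside $(g\circ f)^{-1}(g(W))$ we get $f|_U=(g|_W)^{-1}\circ(g\circ f)|_U$, a composite of an isomorphism onto an open with an open immersion, hence an isomorphism onto an open of $Y$.

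For the remaining two left-cancellation clauses in (3) I would reuse the graph/diagonal constructions already exploited in the excerpt. For quasicompactness of $f$: the graph $\Gamma_f\colon X\to X\times_Z Y$ is the base change of the diagonal $\Delta_g$ along $f\times\mathrm{id}_Y$, and $\Delta_g$ is quasicompact since $g$ is quasiseparated, so $\Gamma_f$ is quasicompact; moreover $f=p_Y\circ\Gamma_f$, where $p_Y\colon X\times_Z Y\to Y$ is the base change of the quasicompact morphism $g\circ f$, so $f$ is a composite of quasicompact morphisms --- this is literally the argument already carried out in the proposition on $X\xrightarrow{f}Y\xrightarrow{g}Z$. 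For quasiseparatedness of $f$: the canonical map $\iota\colon X\times_Y X\to X\times_Z X$ is the base change of $\Delta_g$ along $f\times f$, hence a monomorphism, hence quasiseparated (a monomorphism $u$ has $\Delta_u$ an isomorphism, so it is quasicompact); since $\iota\circ\Delta_f=\Delta_{g\circ f}$ is quasicompact because $g\circ f$ is quasiseparated, applying that same left-cancellation argument with $(\Delta_f,\iota,\Delta_{g\circ f})$ in place of $(f,g,g\circ f)$ shows $\Delta_f$ is quasicompact, i.e.\ $f$ is quasiseparated. I expect the main obstacle to be purely organizational: verifying carefully that the squares exhibiting $\Gamma_f$ and $\iota$ as base changes of diagonals are genuinely cartesian, since every downstream conclusion in (3) rests on that bookkeeping; none of the individual steps is difficult.
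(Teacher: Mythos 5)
Your proposal is correct and follows essentially the same route as the paper: reduce ``real \'etale'' to its four constituent properties, invoke the composition/base-change/cancellation results already established for quasicompact, quasiseparated, and locally finite type morphisms, and verify the local-isomorphism clauses directly by the same open-cover and $(g\lvert_W)^{-1}\circ(g\circ f)\lvert_U$ arguments the paper uses (in (2), note it is the open $U\times_V t^{-1}(V)$, not the full preimage of $t^{-1}(V)$, that maps isomorphically onto $t^{-1}(V)$). Your graph/diagonal bookkeeping for the quasicompact and quasiseparated cancellation clauses in (3) is in fact more explicit than the paper, which simply cites its earlier propositions at that step.
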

\begin{proof}
Since quasicompact morphisms, quasiseparated morphisms, locally finite type morphisms and local isomorphisms are closed under composition, real \'{e}tale morphisms are closed under composition as well. In fact, we've shown morphisms which are locally of finite type have all three properties in the above arguments, so we only need to prove quasicompact, quasiseparated, local isomorphisms have the listed properties.
\par To prove real \'{e}tale morphisms are closed under base change, we note that quasiseparated and quasicompact morphisms were shown previously to be closed under base change, and if we can show local isomorphisms are closed under base change, we are done. Fix a pullback square 
\begin{center}
\begin{tikzcd}
 T \times_{Y} X \ar[r,"\pi_X"] \ar[d,"\pi_T"] & X \ar[d,"f"] \\ T \ar[r,"t"] &  Y
\end{tikzcd}
\end{center}
with $f$ real \'{e}tale, and let $q$ be a point in $T \times_{Y} X$. Fix an open neighborhood $U$ about $\pi_{X}(q)$ on which $f|_{U}$ is an isomorphism onto an open $V \subseteq Y$. We make take the fiber product of the above diagram with $V$ over $Y$ to yield
\begin{center}
\begin{tikzcd}
 t^{-1}(V) \times_{V} f^{-1}(V) \ar[r,"\pi_X"] \ar[d,"\pi_T"] & f^{-1}(V) \ar[d,"f"] \\ t^{-1}(V) \ar[r,"t"] &  V.
\end{tikzcd}
\end{center}
We know $U \subseteq f^{-1}(V)$, and that the restriction of $f$ to $U$ is an isomorphism. Consequently, in the larger fibered square
\begin{center}
\begin{tikzcd}
U \times_{V} t^{-1}(V) \ar[d,"\pi_{U}" swap] \ar[r,"\pi_{t}"] & t^{-1}(V) \times_{V} f^{-1}(V) \ar[d,"\pi_X"] \ar[r,"\pi_T"] & t^{-1}(V) \ar[d,"t"] \\ U \ar[r,hook] & f^{-1}(V) \ar[r,"f"] &  V,
\end{tikzcd}
\end{center}
the morphism $U \times_{V} t^{-1}(V) \to t^{-1}(V)$ is an isomorphism. We have now demonstrated the existence of a neighborhood $U \times_{V} t^{-1}(V)$ of $q$ and an isomorphism onto its image $t^{-1}(V)$.
\par Finally, we prove left cancellation holds for real \'{e}tale morphisms. Suppose $g \circ f$ and $g$ are real \'{e}tale morphisms. Since real \'{e}tale morphisms are quasiseparated, and quasiseparated morphisms have left cancellation, $f$ is quasiseparated, and since $g \circ f$ is quasicompact as well, $f$ is quasiseparated. Finally, to show $f$ is a local isomorphism, let $x \in X$, and let $U_0$ be a neighborhood of $x$ on which $(g \circ f)\lvert_{U_{0}}$ is an isomorphism onto a neighborhood $U_2$ of $g(f(x))$. Since $g$ is a local isomorphism, select a neighborhood $U_1 \subseteq g^{-1}(U_{2})$ of $f(x)$ such that $g\lvert_{U_{1}}$ is an isomorphism onto an open $V_{2} \subseteq U_{2}$. We then have an isomorphism
\begin{align*}
& \left(g\lvert_{U_1}\right)^{-1}\circ (g \circ f)\biggr\lvert_{(g \circ f)^{-1}(V_{2})} \\ =& \left(g\lvert_{U_1}\right)^{-1}\circ g\lvert_{U_1} \circ f\biggr\lvert_{(g \circ f)^{-1}(V_{2})}  \\ =& f\biggr\lvert_{(g \circ f)^{-1}(V_{2})},
\end{align*}
so $f$ is real \'{e}tale and the proposition is proved
\end{proof}

\begin{remark}
The following defintion is to be sharply contrasted with Scheiderer's real surjective families and his construction of the real \'{e}tale site found in \cite{Scheiderer_94}. While Scheiderer constructs the real \'{e}tale site over a scheme $X$, he completely avoids defining 'real \'{e}tale' in the real closed category. This choice allows Scheiderer to use the full power of known results in the scheme category, but Scheiderer's approach does not take advantage of how structured real closed spaces are as a category. Additionally, since quasicompact morphisms of real closed spaces have constructible images, we can certainly define site and topos structures on images of real \'{e}tale morphisms. These subsets and their intersections are the main subsets of real closed spaces we compute with, and there is no need to give such structures to arbitrary subsets as was done in (\cite{Scheiderer_94},8). I will try to point out other moments of sharp contrast to Scheiderer's work as they arise. 
\end{remark}

In order to put a site structure suitable for motivic cohomology on the category of real closed spaces over a fixed real closed $S$, we need to specify a definition of Nisnevich cover. Luckily, since our definition of real \'{e}tale morphisms are local isomorphisms, real \'{e}tale morphisms easily lend themselves to defining Nisnevich covers. Also, since real \'{e}tale morphisms are quasicompact, every cover can be refined to a finite cover. We make the following precise below: 

\begin{defn}
A \textit{real \'{e}tale cover} of a real closed space $X$ is a collection of real \'{e}tale morphims $\{p_{\lambda}:U_{\lambda} \to X\}_{\lambda \in \Lambda}$ such that 
\[  \bigcup_{\lambda \in \Lambda} p_{\lambda}(U_{\lambda})=X.  \]
\end{defn}

\begin{prop}
Let $\{p_{\lambda}:U_{\lambda} \to X\}_{\lambda \in \Lambda}$ be a real \'{e}tale cover of a quasicompact real closed space $X$. Then
\begin{enumerate}[i.]
\item we can refine the cover $\{p_{\lambda}\}_{\lambda \in \Lambda}$ to be finite; \[ \{p_{i}:U_{i} \to X\}_{i=1}^{n} \subseteq \{p_{\lambda}:U_{\lambda} \to X\}_{\lambda \in \Lambda}  \]
and 
\item in this finite refinement $\{p_{i}:U_{i} \to X\}_{i=1}^{n}$, for each $x \in X$, there are points $y_{i_1} \in U_{i_1}, \hdots, y_{i_k} \in U_{i_k}$ such that $\rho(x) \to \rho(y_{i_j})$ is an isormorphism of real closed fields for each $j$.
\end{enumerate}
\end{prop}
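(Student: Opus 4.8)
**The plan is to prove (i) from quasicompactness of $X$ and (ii) from the local isomorphism property of real étale morphisms, using the fact that covers consist of local isomorphisms.**

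For part (i): since each $p_\lambda : U_\lambda \to X$ is a real étale morphism, in particular it is a local isomorphism, so its set-theoretic image $p_\lambda(U_\lambda)$ is built from images of isomorphisms onto opens; more to the point, I claim each $p_\lambda(U_\lambda)$ is open in $X$. Indeed, for every point $u \in U_\lambda$ there is an open neighborhood on which $p_\lambda$ restricts to an isomorphism onto an open subset of $X$, so $p_\lambda(U_\lambda)$ is a union of opens and hence open. Therefore $\{p_\lambda(U_\lambda)\}_{\lambda \in \Lambda}$ is an open cover of $X$ in the weak (spectral) topology. Since $X$ is quasicompact, finitely many of these cover $X$, say $p_{i}(U_{i})$ for $i = 1, \dots, n$; the corresponding finite subfamily $\{p_i : U_i \to X\}_{i=1}^n$ is the required finite refinement. (Strictly this is a subcover rather than a refinement, which is even better; the terminology in the statement is looser than what we prove.)

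For part (ii): fix $x \in X$. By (i) there is at least one index $i$ with $x \in p_i(U_i)$; collect all such indices into $\{i_1, \dots, i_k\}$. For each such index $i_j$, pick $y_{i_j} \in U_{i_j}$ with $p_{i_j}(y_{i_j}) = x$. Because $p_{i_j}$ is a local isomorphism, there is an open $U' \ni y_{i_j}$ in $U_{i_j}$ and an open $V' \ni x$ in $X$ such that $p_{i_j}|_{U'} : U' \to V'$ is an isomorphism of real closed spaces. An isomorphism of locally ringed spaces induces, at corresponding points, an isomorphism on stalks and hence on residue fields; passing to the real closed residue field $\rho(-)$ at the point (as identified with $\sper \rho(\cdot)$ earlier in the text), we get that $\rho(x) \to \rho(y_{i_j})$ is an isomorphism of real closed fields. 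This holds for each $j$, proving (ii).

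The main obstacle is pinning down that the set-theoretic image of a real étale morphism is open in the weak topology — everything else is formal. This follows from unwinding the definition of local isomorphism: every point of the source has a neighborhood mapped isomorphically onto an \emph{open} of the target, so the image is a union of such opens. One should be slightly careful that "local isomorphism" here means isomorphism onto an open subspace (with its restricted structure sheaf), not merely onto a locally closed or constructible subset; given the definitions in the excerpt this is the intended reading. With that in hand, quasicompactness of $X$ (assumed in the statement) immediately yields (i), and the stalk/residue-field functoriality of isomorphisms of locally ringed spaces yields (ii); note that the field $\rho(x)$ is recovered intrinsically from the local ring at $x$ as established when affine real closed spaces were shown to be locally ringed, so the identification is canonical.
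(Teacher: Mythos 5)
Your proposal is correct and follows essentially the same route as the paper: images of the local isomorphisms $p_\lambda$ are open, so quasicompactness of $X$ yields the finite subfamily for (i), and for (ii) the local isomorphism property at preimages of $x$ gives the isomorphisms $\rho(x) \to \rho(y_{i_j})$ of real closed (residue) fields. Your write-up is in fact a bit more explicit than the paper's, spelling out the stalk/residue-field step that the paper leaves implicit.
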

\begin{proof}
We can prove ii. by noting that for $j \in \{1,2,\hdots,k\}$, there is a neighborhood $W_{i_j}$ of $x$ which is mapped to isomorphically by some open $V_{i_j} \subseteq U_{i_j}$. We may then restrict the codomain of each $p_{i_j}$ to \[ \bigcap_{j=1}^{k} W_{i_j}, \]
and since $p_{i_j}$ is a local isomorphism, $y_{i_j}=p_{i_j}^{-1}(x)$ is our desired set of points.
\par To prove i., we simply note that local isomorphisms are open maps, and so we refine the cover $\{p_{\lambda}(U_{\lambda})\}$ to a finite cover $\{p_i(U_{i})\}_{i=1}^{n}$. The real closed space morphisms $\{p_i\}_{i=1}^{n}$ give a finite real \'{e}tale cover of $X$, concluding this argument. 
\end{proof}

\begin{remark}
Since each point of $X$ is in the image of a local isomorphism of real closed spaces from our real \'{e}tale cover, it follows that all real \'{e}tale covers have the Nisnevich property. 
\end{remark}

If we fix a real closed space $X$, we have a natural candidate for a site structure on $X$ given by the real \'{e}tale covers, and we formalize this in the proposition below;
\begin{prop}
Let $\rce /X$ be the category with objects as real closed spaces with real \'{e}tale morphisms to $X$, with morphisms given by commutative triangles over $X$. Then $\rce /X$ together with $\tau$, the real \'{e}tale covers of $X$, is a site $(\rce /X,\tau)$. 
\end{prop}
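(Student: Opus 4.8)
The plan is to check that $\tau$ is a basis for a Grothendieck topology on $\rce/X$, i.e.\ that the real \'{e}tale covers satisfy the three pretopology axioms: isomorphisms are covers, covers pull back to covers, and a cover of a cover is a cover; this determines a Grothendieck topology and hence makes $(\rce/X,\tau)$ a site. First I would set up two preliminaries. One: $\rce/X$ is closed under fiber products — if $(V\to X)$ and $(W\to X)$ are objects sitting over a third object $(U\to X)$, the fiber product $V\times_U W$ of real closed spaces exists (real closed spaces glue from affine real spectra, whose fiber products exist) and $V\times_U W\to W\to X$ is real \'{e}tale by the Proposition on closure of real \'{e}tale morphisms under base change and composition, so $V\times_U W\in\rce/X$. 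Two: every morphism of $\rce/X$ is itself real \'{e}tale, by the left-cancellation clause of that same Proposition; in particular, in any covering family $\{U_\lambda\to V\}$ over $X$ each arrow $U_\lambda\to V$ is automatically real \'{e}tale, and for any $W\to V$ in $\rce/X$ each projection $U_\lambda\times_V W\to W$ is real \'{e}tale.

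With these in hand, two of the three axioms are routine and I would dispatch them quickly. For the isomorphism axiom: an isomorphism $U\xrightarrow{\sim}V$ is real \'{e}tale (it is a local isomorphism, quasicompact, quasiseparated, and locally of finite type, taking $n=0$), and it is surjective, so $\{U\xrightarrow{\sim}V\}$ is a real \'{e}tale cover. For transitivity: given a real \'{e}tale cover $\{U_\lambda\to V\}$ and real \'{e}tale covers $\{V_{\lambda\mu}\to U_\lambda\}$ for each $\lambda$, each composite $V_{\lambda\mu}\to U_\lambda\to V$ is real \'{e}tale, and
\[ \bigcup_{\lambda,\mu}\mathrm{im}(V_{\lambda\mu}\to V)=\bigcup_\lambda\mathrm{im}\Big(\big(\textstyle\bigcup_\mu\mathrm{im}(V_{\lambda\mu}\to U_\lambda)\big)\hookrightarrow V\Big)=\bigcup_\lambda\mathrm{im}(U_\lambda\to V)=V, \]
so $\{V_{\lambda\mu}\to V\}$ is a real \'{e}tale cover.

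The substantive step, and the one I expect to be the main obstacle, is stability under base change: given a real \'{e}tale cover $\{p_\lambda:U_\lambda\to V\}$ and a morphism $j:W\to V$ in $\rce/X$, I must show $\{U_\lambda\times_V W\to W\}$ is a real \'{e}tale cover. Each projection is real \'{e}tale by the preliminaries, so only set-theoretic surjectivity of $\coprod_\lambda U_\lambda\times_V W\to W$ is at issue — and this is \emph{not} formal for general morphisms of locally ringed spaces. The key is that a real \'{e}tale morphism is a \emph{local isomorphism}, not merely flat or unramified. Concretely: fix $w\in W$, let $v=j(w)$, pick $\lambda$ and $u\in U_\lambda$ with $p_\lambda(u)=v$ (possible since $\{p_\lambda\}$ covers $V$); since $p_\lambda$ is a local isomorphism there is an open $U'\ni u$ with $p_\lambda|_{U'}\colon U'\xrightarrow{\sim}V'$ an isomorphism onto an open $V'\ni v$ of $V$. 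Then $U'\times_V W\cong V'\times_V W$ is canonically the open subspace $j^{-1}(V')\subseteq W$, which contains $w$; transporting along the open inclusion $U'\hookrightarrow U_\lambda$ yields a point of $U_\lambda\times_V W$ lying over $w$. Hence the base-changed family is surjective, and is therefore a real \'{e}tale cover. Once all three axioms are verified, $(\rce/X,\tau)$ is a site, completing the proof.
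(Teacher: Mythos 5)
Your proof is correct and follows essentially the same route as the paper: both verify the Grothendieck pretopology axioms by appealing to the fact that isomorphisms are real \'{e}tale and that real \'{e}tale morphisms are closed under composition and base change (plus left cancellation to see all morphisms of $\rce/X$ are real \'{e}tale). The only difference is that you spell out the joint-surjectivity of a base-changed cover via the local-isomorphism property, a point the paper's one-line proof leaves implicit, so your write-up is a more detailed version of the same argument.
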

\begin{proof}
Since real \'{e}tale morphisms are stable under base change and composition, real \'{e}tale covers are as well, which proves $\tau$ satisfies the first three axioms of a Grothendieck pretopology. Since isomorphisms are quasicompact, quasiseparated, local isomorphisms, $\tau$ also satisfies the fourth axiom, and $(\rce /X,\tau)$ is a site.
\end{proof}

Most things we can do with topological spaces also carry over to sites, including sheaves!

\begin{defn}
A functor $\mathscr{F}:(\rce /X)^{\text{op}} \to \underline{Sets}$ is a \textit{presheaf} on the site $\rce /X$, which is a \textit{sheaf} if for any cover $\{p_{\lambda}:U_{\lambda} \to X\}_{\lambda \in \Lambda} \in \tau$, the sequence
\begin{center}
\begin{tikzcd}
  \mathscr{F}(X) \ar[r] & {\displaystyle \prod_{\lambda \in \Lambda}\mathscr{F}(U_{\lambda})} \ar[r,bend left=10] \ar[r,bend right=10] &  {\displaystyle \prod_{\kappa,\lambda \in \Lambda} \mathscr{F}(U_{\kappa} \times_{X} U_{\lambda}) }
\end{tikzcd}
\end{center}
is an equalizer sequence.     
\end{defn}

The sheaves on a Groethendieck site form a topos, and this will be handy for comparing the work of Scheiderer to this text. 

\subsection{Relationship between Scheiderer's $\text{Sh}(Et/X)$ and $\text{Sh}(\rce/\rho X)$}

We will use Lemma 7.29.1 of the stacks project to show these categories of sheaves are equivalent, so we state the lemma in full below

\begin{lemma}
(\cite{stacks-project}, 7.29.1) Let $u:\mathscr{C} \to \mathscr{D}$ be a functor between sites such that 
\begin{enumerate}[i.]
\item $u$ is cocontinuous,
\item $u$ is continuous,
\item For any morphisms $a,b:U' \to U$ in $\mathscr{C}$ such that $ua=ub$, there is a covering $\{f_i:U_{i}' \to U'\}_{i \in I}$ such that $a \circ f_i = b \circ f_i$ for all $i$,
\item For any morphism $c:u(U') \to u(U)$ in $\mathscr{D}$, there is a covering $\{f_i:U_{i}' \to U'\}_{i \in I}$ and morphisms $\{c_i:U_{i}' \to U\}_{i \in I}$ such that $uc_{i}=c \circ uf_{i}$, and
\item For any object $V$ of $\mathscr{D}$, there is a covering family $\{u(U_i) \to V\}_{i \in I}$ of $\mathscr{D}$.
\end{enumerate}
Then the induced morphism 
\[ g=(u^{p},u_{p}):\text{Sh}(\mathscr{C}) \to \text{Sh}(\mathscr{D})  \]
of topoi is an equivalence of categories.
\end{lemma}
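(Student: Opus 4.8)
The plan is to reproduce the standard argument that a \emph{special cocontinuous functor} induces an equivalence of topoi; in the dissertation I would cite \cite{stacks-project} for this, but the shape of the proof is as follows. Write $\bar u\colon\text{Sh}(\mathscr{D})\to\text{Sh}(\mathscr{C})$ for the restriction functor $\mathscr{G}\mapsto\mathscr{G}\circ u$. First I would dispose of the purely formal input, which uses only (i) and (ii): continuity of $u$ says exactly that $\mathscr{G}\circ u$ is a sheaf whenever $\mathscr{G}$ is, so $\bar u$ is well defined, and cocontinuity of $u$ says (equivalently) that the presheaf-level functor $\mathscr{G}\mapsto\mathscr{G}\circ u$ commutes with sheafification, which forces $\bar u$ to be exact and to admit a right adjoint $\bar u_{*}$, so that $g=(\bar u,\bar u_{*})$ is genuinely a morphism of topoi. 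All of the content of the lemma is then in showing $\bar u$ is an equivalence, which I would do by checking it is fully faithful and essentially surjective; this is where (iii)--(v) enter.

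For full faithfulness I would use (iii), (iv) and (v) to reconstruct a sheaf on $\mathscr{D}$ from its restriction. Given $\mathscr{G}\in\text{Sh}(\mathscr{D})$ and an object $V$ of $\mathscr{D}$, condition (v) gives a covering $\{u(U_{i})\to V\}$, so $\mathscr{G}(V)$ is the equalizer of $\prod_{i}\mathscr{G}(u(U_{i}))\rightrightarrows\prod_{i,j}\mathscr{G}(W_{ij})$ with $W_{ij}=u(U_{i})\times_{V}u(U_{j})$; covering each $W_{ij}$ once more by objects of the form $u(-)$, using (iv) to lift the two projection-type maps to morphisms of $\mathscr{C}$ and (iii) to see that such a lift is unique up to a further covering, one finds the whole equalizer diagram, hence $\mathscr{G}(V)$, is determined by the values of $\bar u\mathscr{G}$ on $\mathscr{C}$. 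Carrying out the same reconstruction for a second sheaf $\mathscr{F}$ shows $\text{Hom}(\mathscr{F},\mathscr{G})\to\text{Hom}(\bar u\mathscr{F},\bar u\mathscr{G})$ is a bijection.

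For essential surjectivity, starting from $\mathscr{H}\in\text{Sh}(\mathscr{C})$ I would define a presheaf on $\mathscr{D}$ by $V\mapsto\lim\mathscr{H}(U)$, the limit running over the category of pairs $(U,c\colon u(U)\to V)$; conditions (iv) and (v) make this indexing category cofiltered enough for the construction to be well behaved, (iii) and (v) give the sheaf axiom, and (iii)--(iv) together show $\text{id}_{u(U)}$ is cofinal among the maps $u(U')\to u(U)$, whence the restriction of its sheafification is isomorphic to $\mathscr{H}$. Together these show $\bar u$ is an equivalence, hence $g$ is. I expect the only real obstacle to be organizational: each of (iii)--(v) yields its conclusion only after passing to some covering, so every identification above must be made locally and then shown to glue, and one must repeatedly check that the covers produced are covers of the objects actually needed — which is exactly the point at which cocontinuity of $u$, rather than the set-theoretic conditions, is invoked, to transport covers between $\mathscr{C}$ and $\mathscr{D}$.
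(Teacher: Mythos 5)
The paper never proves this lemma: it is quoted verbatim, with citation, from the Stacks Project and used as a black box in the equivalence theorem that follows, so your stated plan — cite \cite{stacks-project} and, if needed, reproduce the standard ``special cocontinuous functor'' argument — is exactly the paper's approach. Your sketch (restriction $\mathscr{G}\mapsto\mathscr{G}\circ u$ is a well-defined exact left adjoint by (i)--(ii); full faithfulness and essential surjectivity from (iii)--(v) via covering objects of $\mathscr{D}$ by objects $u(U)$ and locally lifting morphisms, with uniqueness up to refinement) is a faithful outline of the cited proof, so there is nothing to correct.
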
 

We will apply this to our sheaf topoi, where our functor $u$ is a version of the real closure operation

\begin{defn}
Suppose $X$ is a scheme with a real closed point $\text{Spec}(k) \to X$. On an open affine cover $\{\text{Spec}(A_{\ell})\}_{\ell \in L}$ of $X$, we have the composition of functors $\sper(\cdot) \circ \Gamma$ which sends each open affine to the real closed space $\sper A_{\ell}$. Since the real closure functor commutes with taking quotients and localization by section 1.7, the real spectra $\sper A_{\ell} \cong \sper \rho A_{\ell}$ glue to form a real closed space $\rho X$. We thus have a real closure functor 
\[  \rho:(Et/X,\text{ret}) \to (\rce/\rho X,\tau)   \]
from the category of etale morphisms over a fixed scheme $X$ to the category $\rce/\rho X$, where $\text{ret}$ specifies coverings as collections $f_\lambda$ of \'{e}tale maps of schemes with codomain $U$ such that the collection $\rho f_\lambda$ surjects onto $\rho U$. 
\end{defn} 

\begin{thm}
$\rho$ induces an equivalence of topoi between $\text{Sh}(Et/X)$ and $\text{Sh}(\rce/\rho X)$.
\end{thm}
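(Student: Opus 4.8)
The plan is to apply the Stacks Project lemma just quoted to the functor $u=\rho:(Et/X,\text{ret})\to(\rce/\rho X,\tau)$, so that the whole problem reduces to verifying its five hypotheses (i)--(v); once they hold, the induced morphism of topoi is an equivalence and we are done.

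Conditions (i) and (ii) are the formal ones. For continuity (ii): that $\rho$ carries $\text{ret}$-coverings to $\tau$-coverings is built into the definition of $\text{ret}$, and that $\rho$ respects the relevant fibre products follows because, affine-locally, $\rho$ is $\sper(\cdot)$ composed with the real-closure reflector $\rho:\redPo\to\rcr$, which is a left adjoint and hence commutes with pushouts of rings; glueing over affine covers gives $\rho(U'\times_U U'')\cong\rho(U')\times_{\rho(U)}\rho(U'')$. For cocontinuity (i): given a real \'etale cover $\{V_j\to\rho(U)\}$ of $\rho(U)$, I would use that each $V_j\to\rho(U)$ is a local isomorphism, shrink to reduce to the case that $V_j$ is an open of $\rho(U)=\sper(\rho A)$ for $U=\sper A$ affine, reduce further to basic opens $D(a_1,\dots,a_n)$, and observe that $D(a_1,\dots,a_n)\subseteq\sper(\rho A)$ is $\rho$ of the open subscheme $D(a_1\cdots a_n)$ of $U$ together with the constructible locus cut out by the $a_i>0$; tracking which square roots must be adjoined (see below) then produces an honest $\text{ret}$-covering of $U$ whose $\rho$-image refines $\{V_j\}$.

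The substance is in (iii)--(v), which together say that $\rho$ is essentially surjective and full ``up to refinement''. For (v), every object $V$ of $\rce/\rho X$ is real \'etale over $\rho X$, hence, being a local isomorphism locally of finite type, is covered by opens isomorphic to opens of $\rho X$; shrinking to affine basic opens reduces to showing that $D(a)\subseteq\sper(\rho A)$ is, real-\'etale-locally, $\rho$ of an \'etale $X$-scheme. Here I would use the ``adjoin a square root'' construction: $\sper(A_a[t]/(t^2-a))\to\sper A_a$ is \'etale over the open $\{a\neq 0\}$ and, on real points, is a double cover of $\{a>0\}$ and empty over $\{a<0\}$, so the sheet cut out by $t>0$ has real closure exactly $D(a)$. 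For (iv), a morphism $c:\rho(U')\to\rho(U)$ over $\rho X$ is, affine-locally, a homomorphism of real closed rings, i.e.\ is given by finitely many semialgebraic functions; by semialgebraic cell decomposition and Tarski--Seidenberg (the model theory already exploited throughout Section 1) each such function, on each cell, is a root of a polynomial with coefficients pulled back from $A$, so after passing to the $\text{ret}$-cover of $U'$ refining the cells and adjoining the needed roots, $c$ is induced by an honest morphism of $X$-schemes, yielding the required $\{f_i:U_i'\to U'\}$ and $\{c_i\}$. For (iii), if $a,b:U'\to U$ are \'etale $X$-morphisms with $\rho(a)=\rho(b)$, then (since $U\to X$ is unramified) their equalizer $E$ is open in $U'$, and $\rho(a)=\rho(b)$ forces $E$ to contain every point of $\sper U'$; hence $\{E\hookrightarrow U'\}$ is a $\text{ret}$-cover on which $a$ and $b$ agree.

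I expect the main obstacle to be precisely (iv) and (v): the real content is the passage from semialgebraic data (open semialgebraic loci, semialgebraic functions, ring maps of real closures) to genuinely algebraic data \'etale over $X$, and the delicate point is checking that the stratifications from cell decomposition together with the square-root extensions really assemble into bona fide real \'etale covers in the sense of this paper --- local isomorphisms that are quasicompact and quasiseparated --- and not merely set-theoretic surjections, all the while keeping the real spectra nonempty by staying over $\sper(k)\to X$. Once those covers are in hand, (i), (ii), (iii) and the bookkeeping in (iv), (v) are routine, and the quoted lemma gives the equivalence $\text{Sh}(Et/X)\simeq\text{Sh}(\rce/\rho X)$.
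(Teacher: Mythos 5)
There is a genuine gap, and it sits exactly where you declare the problem ``formal'': condition (ii). You assert that ``$\rho$ carries $\text{ret}$-coverings to $\tau$-coverings is built into the definition of $\text{ret}$.'' It is not. The $\text{ret}$ topology only builds in that the family $\{\rho f_\lambda\}$ is jointly surjective onto $\rho U$; a $\tau$-covering in $\rce/\rho X$ additionally requires each individual $\rho f_\lambda$ to be a real \'etale morphism in the sense of this paper, i.e.\ quasicompact, quasiseparated, locally of finite type, and a \emph{local isomorphism} of locally ringed spaces. In fact this is needed even before checking the five hypotheses: for $u=\rho$ to land in the site $\rce/\rho X$ at all, $\rho U\to\rho X$ must be real \'etale for every \'etale $U\to X$. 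Proving this is the technical heart of the paper's argument: one writes $f$ affine-locally as a closed immersion into $\sper B[x_1,\hdots,x_n]$ and describes it by a sentence in $\lorf(B[x_1,\hdots,x_n])$ to get ``locally of finite type''; quasicompactness and quasiseparatedness are checked on affines; Scheiderer's result gives that $\rho f$ is a local \emph{homeomorphism}; and the upgrade from local homeomorphism to local isomorphism requires a separate analysis of local subspaces at closed points (comparing transcendence degrees over $\R_{alg}$ to show all such local subspaces in an equidimensional neighborhood are isomorphic). None of this appears in your proposal, so the main content of the theorem is assumed rather than proved.

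Your handling of the remaining conditions is broadly sound and in places more explicit than the paper's: for (iii) the open equalizer of two $X$-morphisms into an unramified target, which must contain every support of an ordering when $\rho(a)=\rho(b)$, is a cleaner argument than the paper's one-line remark; for (v) the square-root cover $\sper\bigl(A_a[t]/(t^2-a)\bigr)$ with the sheet $t>0$ realizing $D(a)$ is a concrete witness the paper leaves implicit; and for (iv) cell decomposition plus Tarski--Seidenberg is a plausible route, though you would still need to check that the resulting refinements are honest $\text{ret}$-covers. But these improvements do not repair the omission above: without a proof that real closures of \'etale scheme morphisms are real \'etale (in particular local isomorphisms), the functor $\rho$ is not known to be continuous, and the quoted Stacks Project lemma cannot be invoked.
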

\begin{proof}
We apply Lemma 80 to prove the claim, so we will proceed to show all five conditions of the lemma are satisfied. 
\par To prove $\rho$ is cocontinuous, let $\{ f_{j}:V_{j} \to \rho U\}_{j \in J}$ cover $\rho U$ for some scheme $U$. Let $\sper B_{i}$ be an affine open cover of $\rho U$, and choose $\sper A_{\lambda} \to f_{j}^{-1}(\sper B_i)$ a cover of $f_{j}^{-1}(\sper B_i)$ wherein each $\sper A_{\lambda}$ is is chosen small enough that $f_{j}\lvert_{\sper A_{\lambda}}$ is an isomorphism and $A_\lambda$ is real closed. For each triple $(i,j,\lambda)$ we have a commutative diagram
\begin{center}
\begin{tikzcd}
\sper A_{\lambda} \ar[r ,hook] \ar[d,hook] & f_{j}^{-1}(\sper B_{i}) \ar[r,hook] \ar[d,"f_{j}\lvert"] &  V_{j} \ar[d] \\ \sper B_i[t_1,\hdots,t_n] \ar[r] &  \sper B_{i} \ar[r,hook]  &  \rho U ,
\end{tikzcd}
\end{center}
where each horizontal hook arrow is an open immersion. Since $\lambda$ depends on $j$, the $\rce$ morphisms
\[ \sper A_{\lambda} \hookrightarrow V_{j} \to \rho U \]
give a refinement of the covering $\{ f_{j}:V_{j} \to \rho U\}_{j \in J}$. I further claim that $\{\text{Spec}(A_{\lambda}) \to U \}_{\lambda \in \Lambda}$ is a cover of $U$ in $\text{ret}$. It is slightly easier to show $\{\text{Spec}(A_{\lambda}) \to \text{Spec}(B_{i}) \}_{(i,\lambda) \in I \times \Lambda}$ is a cover, and this implies our claim because any cover of a $\text{ret}$ cover is a $\text{ret}$ cover. Cover $f_j(\sper A_{\lambda})$ with affine opens $\sper(B_{i,b_1}),\hdots ,\sper(B_{i,b_m})$, where $B_{i,b_k}$ is a localization of both $A_{\lambda}$ and $B_i$. We have a commutative diagram of rings
\begin{center}
\begin{tikzcd}
 B_i \ar[r] \ar[rd] & B_{i}[t_1,\hdots,t_n] \ar[r] \ar[d] & A_{\lambda}  \\  & B_{i,b_k}  &
\end{tikzcd}
\end{center}
for each localization $B_{i,b_k}$, so by the universal property of the equalizer sequence, we obtain a diagram 
\begin{center}
\begin{tikzcd}
A_{\lambda} \ar[r]  & {\displaystyle \prod_{k}}B_{i,b_k} \ar[r,bend left=10] \ar[r,bend right=10]   & {\displaystyle \prod_{k,l}} B_{i,b_{k}b_{l}}  \\  & B_i[t_1,\hdots,t_n]. \ar[u] \ar[lu,dotted,"\exists !" ] &  
\end{tikzcd}
\end{center}
We extend our open affine covering $\sper(B_{i,b_{k}})$ to an open affine covering $\sper(B_{i,b_{k'}})$ of $\sper B_i$, and this new cover yields an equalizer sequence of $B_i$-modules
\begin{center}
\begin{tikzcd}
 B_i \ar[r] & {\displaystyle \prod_{k}}B_{i,b_{k'}} \ar[r,bend left=10] \ar[r,bend right=10]  & {\displaystyle \prod_{k}}B_{i,b_{k'}b_{l'}} \\ A_{\lambda} \ar[r] \ar[u,dotted,"\exists !"]  & {\displaystyle \prod_{k}}B_{i,b_k} \ar[r,bend left=10] \ar[r,bend right=10] \ar[u] &  {\displaystyle \prod_{k}}B_{i,b_{k}b_{l}}. \ar[u]
\end{tikzcd}
\end{center}
Hence $A_{\lambda} \cong B_i$, proving $\text{Spec}(A_{\lambda}) \to U$ is an \'{e}tale morphism of schemes. 
\par The next argument will demonstrate $\rho$ is a continuous functor. Since tensor products of rings commute with real closures, we only need demonstrate that for any \'{e}tale cover $\{U_i \to U\}$ over $X$ in the ret topology, $\rho U_i \to \rho U$ is an $\rce$ cover over $\rho X$. For any \'{e}tale morphism $f:X \to Y$ , where $f$ is a morphism in a ret cover of $Y$, we have a commutative diagram
\begin{center}
\begin{tikzcd}
  & \text{Spec}(A) \ar[r,hook] \ar[d] \ar[ld,hook] & X \ar[d,"f"] \\ \text{Spec}(B[x_1,\hdots,x_n]) \ar[r]  & \text{Spec}(B) \ar[r,hook] &  Y
\end{tikzcd}
\end{center}
since $f$ is locally of finite type, where $\text{Spec}(A)$ and $\text{Spec}(B)$ are open affines in $X$ and $Y$ respectively, and $\text{Spec}(A) \to \text{Spec}(B[x_1,\hdots,x_n])$ is a closed immersion. We then know 
\[  A \cong B[x_1,\hdots,x_n]/\langle g_{1}(\underline{x}),\hdots, g_{k}(\underline{x}) \rangle,  \]
so in model-theoretic terms, we have the corresponding sentence $\Phi_{\text{Spec}(A)}$ in the language $\lorf(B[x_1,\hdots,x_n])$ given explicitly by
\[  \exists x_1,\hdots,x_n:\bigwedge_{i=1}^{k}g_{i}(\underline{x})=0.  \]
$\Phi_{\text{Spec}(A)}$ defines a constructible subspace $K_{\Phi}$ in $\sper B[x_1,\hdots,x_n]$, and for any homomorphism $\varphi:B[x_1,\hdots,x_n] \to \ell$ to a real closed field $\ell$ which satisfies $\Phi_{\text{Spec}(A)}$, we know
\[  \varphi(g_i(\underline{x}))=0  \]
for all $i \in 1,\hdots,k$. We consequently have the factorization 
\begin{center}
\begin{tikzcd}
 B[x_1,\hdots,x_n] \ar[r] \ar[d,two heads] & \ell  \\ A \ar[ru]  &  
\end{tikzcd}
\end{center}
which extends to real closures, so 
\[ K_{\Phi} \cong \sper A. \]
Hence $\rho f$ is locally of finite type. By Corollaries 4.5 and 4.6 of \cite{rcs}, since being \'{e}tale can be checked on affines, $\rho f$ is quasicompact. By Propositions 4.8, 4.9, and 4.15 in \cite{rcs}, 
\[  \rho f\lvert_{\sper A}  \]
is separated for all open affines $\sper A$, so $\rho f$ is quasiseparated. By (\cite{Scheiderer_94},5), $\rho f$ is a local homeomorphism, and if we can show $\rho f$ is a local isomorphism, we will prove $\rho$ is continuous.
\par Since the property 'being \'{e}tale can be checked on affine opens and \'{e}tale morphisms are quasicompact, it suffices to analyze \'{e}tale morphisms $f$ in diagrams of the form
\begin{center}
\begin{tikzcd}
  \text{Spec}(k_{1}\times \hdots \times k_{m}) \ar[r,hook] \ar[d] & \text{Spec}(A) \ar[d,"f"] \\ \text{Spec}(\kappa(\textbf{p})) \ar[r,hook] &  \text{Spec}(B) 
\end{tikzcd}
\end{center}
where $\kappa(\textbf{p})$ is real closed and $k_{i}/\kappa(\textbf{p})$ is a finite separable extension. Since 
\[ \kappa(\textbf{p}) \to k_i \to \overline{\kappa(\textbf{p})}  \]
is a finite extension, $k_i$ is either real closed or algebraically closed, but since algebraically closed fields have empty real closures, we focus on the real closed case. If $k_i$ is real closed, $k_i \cong \kappa(\textbf{p})$. Consider the local subspace $W_{\sper k_i}$ of $\sper A$. Since $f$ has relative dimension 0 and $\rho f$ is surjective by definition of the ret topology, we have an isomorphism of local systems
\[  W_{\sper k_i} \cong W_{\sper \kappa(\textbf{p})} \]
induced by $f$. On a connected open neighborhood $V \subseteq \sper B$ about $\sper \kappa(\textbf{p})$, we take a connected connected open $\rho f^{-1}(V) \cap C_{\sper k_i}$, where $C_{\sper k_i}$ is the connected component of $\sper A$ containing $\sper k_i$. By our isomorphism of local subspaces, 
\[  \rho f^{-1}(V) \cap C_{\sper k_i} \text{ and } V  \]
are equidimensional. We further shrink the neighborhood $\rho f^{-1}(V) \cap C_{\sper k_i}$ to $U$ so that $\rho f\lvert_{U}$ is a homeomorphism onto its image.  
\par To conclude that the local homeomorphism $\rho f$ is a local isomorphism, a lemma about local subspaces is required:
\begin{lemma}
The closed points in an $n$-dimensional real closed space $U$ have isomorphic local subspaces.
\end{lemma}
\begin{proof}
Fix a local subspace $W_{x_1}$ about the closed point $x_1=\sper k_1$, and fix a top dimensional point $\sper L_1 \to W_{x_1}$ so that 
\[  L_1 \cong k_{1}(t_1,\hdots,t_n)  \]
as unordered fields. If $x_2=\sper k_2$ is another closed point of $U$ with top dimensional point $\sper L_2 \to W_{x_2}$, we have 
\[  \text{tr.deg} L_2/k_2 = \text{tr.deg} L_1/k_{1}. \]
I further claim $\text{tr.deg}(k_2/\R_{0})=\text{tr.deg} (k_1/\R_{0})$. Towards a contradiction, assume 
\[ \text{tr.deg} (k_2/\R_{0}) \geq \text{tr.deg} (k_1/\R_{0}),  \]
so that we a diagram of real spectra of fields
\begin{center}
\begin{tikzcd}
\sper L_2 \ar[r] \ar[d] &  \sper k_2 \ar[r] \ar[d] & \sper \R_{0} \\  \sper L_1 \ar[r] & \sper k_{1}. \ar[ru]  &  
\end{tikzcd}
\end{center}
We use the transcendence degree of $k_2/k_1$ to break our analysis of the above diagram into cases. If $\text{tr.deg} (k_2/k_1)=0$, $k_1 \cong k_2$ and $\text{tr.deg} (k_i/\R_{0})$ is a constant function of $i$. Otherwise, we have the equations 
\begin{align*}
\text{tr.deg} (k_2/\R_{0}) =& \text{tr.deg} (k_2/k_1)+\text{tr.deg} (k_1/\R_{0})  \\  \text{tr.deg}(L_i/\R_{0}) =& \text{tr.deg}(L_i/k_i)+\text{tr.deg}(k_i/\R_{0}),
\end{align*}
so if $\text{tr.deg}(k_i/\R_{0})>0$, we have a finite extension
\[ k_{2} \to k_{1} \otimes_{\R_{0}} k_2 \to L_2,  \]
and since 
\[ \text{tr.deg} (k_1 \otimes_{\R_{0}} k_2/k_1) = \text{tr.deg} (k_1/\R_0), \]
both are finite. Properties of the transcendence degree of a field then allow us to conclude we've reached a contradiction, so $\text{tr.deg}(k_2/\R_{0})=\text{tr.deg} (k_1/\R_{0})$ as originally claimed.
\end{proof}
\par Finally, we know homeomorphisms are closed maps, so for any other local system $W_{x_0}$ of a closed point $x_0$ in $U$,
\[ W_{\rho f(x_{1})} \cong W_{x_1} \cong W_{x_0} \cong W_{\rho f(x_{0})}.  \]
Since being a local isomorphism can be checked on local subspaces, and all local subspaces of closed points have been demonstrated to be isomorphic, $\rho f\lvert_{U}$ is an isomorphism onto its image.

\par To prove $\rho$ has property iii., we note that morphisms of real closed spaces in the image of $\rho$ are also scheme morphisms, so property iii. holds for any ret cover.
\par Proving $\rho$ has property iv. is very similar to proving $\rho$ is cocontinuous! For $\sper B_{\lambda}$ an affine open cover of $\rho U$, and $\sper A_{\kappa}$ an open affine cover of $\rho U'\times_{\rho U} \sper B_{\lambda}$, the desired cover is $\text{Spec}(A_{\kappa}) \to U'$ and the desired collection of morphisms is $\text{Spec}(A_{\kappa}) \to U$.
\par To prove the final property, we note that a real closed space $Y$ (over $\rho X$) has a $\rce$-cover $\rho U_{i}$ in which each $U_i$ is \'{e}tale over $X$. We take an open affine cover of $Y$ by real spectra $\sper B_{\lambda}$, and this cover satisfies condition v. This concludes the proof. 
\end{proof}

\subsection{Simplicial Sheaves}

We need to enlarge the category so that real closed spaces and simplices are objects in the same category, in order to construct our own homotopy theory. 

We make use of Nisnevich squares in forming homotopy limits and colimits, so we take a deep look at those squares below; 

\begin{defn}
Let 
\begin{center}
\begin{tikzcd}
U \times_{X} V \ar[r] \ar[d] & V \ar[d,"p"] \\ U \ar[r,hook] & X
\end{tikzcd}
\end{center} 
be a commutative pullback square. This square is an \textit{elementary distinguished Nisnevich square} if $U \to X$ is an open immersion, $p$ is real \'{e}tale, and $p^{-1}(X \setminus U) \to X \setminus U$ is an isomorphism. 
\end{defn}

\begin{ex}
Let $X=\sper \R[u]$. Let $U$ be given by the sentence 
\[  \exists u:u <1 \]
in the language $\lorf(\R[u])$, and let $V$ be given by
\[  \exists u:u >0. \]
I claim 
\begin{center}
\begin{tikzcd}
U \times_{X} V \ar[r] \ar[d] & V \ar[d,hook] \\ U \ar[r,hook] & X
\end{tikzcd}
\end{center}
is an elementary distinguished Nisnevich square. We know $U$ is an open subspace of $X$ and $V \hookrightarrow X$ is real \'{e}tale. To analyze the complement of $U$, we consider the diagram 
\begin{center}
\begin{tikzcd}
& V \ar[d,hook] \\ X \setminus U \ar[r,hook] & X.
\end{tikzcd}
\end{center}
The morphisms from both $X \setminus U$ and $V$ to $X$ are inclusions, so the limit over the above diagram is an intersection, and since $X \setminus U \subset V$,
\[   p^{-1}(X \setminus U) \cong X \setminus U \]
and we have demonstrated what we set out to show.
\end{ex}

\begin{remark}
We can generalize the previous example; if $p:V \to X$ is a real \'{e}tale morphism whose image contains $X \setminus U$, we can form a diagram
\begin{center}
\begin{tikzcd}
U \times_{X} V \ar[dd] \ar[r] & V \ar[d,"p"] \\    &  p(V) \ar[d,hook] \\  U \ar[r,hook] & X.
\end{tikzcd}
\end{center}
Since $p$ is quasicompact, $p(V)$ has its own real closed space structure and is locally constructible in $X$. When we form the fiber product $p^{-1}(X \setminus U)$, we may form it in two stages;
\begin{center}
\begin{tikzcd}
 V \times_{X} (X \setminus U) \ar[r,hook] \ar[d,"p_{X \setminus U}" swap] & V \ar[d,"p"] \\ X \setminus U \ar[d,hook] \ar[r,hook] & p(V) \ar[d,hook] \\ X \setminus U \ar[r,hook] & X,
\end{tikzcd}
\end{center}
where 
\[  V \times_{X} (X \setminus U) \cong p^{-1}(X \setminus U).\]
The previous square is an elementary distinuished Nisnevich square if $p_{X \setminus U}$ is an isomorphism.
For example, if $X=\sper(\R[x]) \setminus \{-1,1\}$, let $U=X \setminus \{0\}$, and let $V \subset \sper \R[x,y]$ be given by 
\[  \begin{cases} x=(y-1)^{2} & \text{ if } x>1 \\ y=0 & \text{ if } -1<x<1 \\ x=-(y+1)^{2} & \text{ if } x<-1.  \end{cases} \]
The projection to the $x$-coordinate is a real \'{e}tale morphism $V \to X$, and 
\begin{center}
\begin{tikzcd}
p^{-1}(U) \ar[r,hook] \ar[d]  & V \ar[d] \\ U \ar[r,hook] & X
\end{tikzcd}
\end{center}
is an elementary distinguished Nisnevich square.
\end{remark}

Now we form the simplicial model category central to our study, which is a category large enough to contain an $n$-simplex for each $n$, as well as every real closed space over $X$;

\begin{defn}
Let $\Delta$ be the category whose objects are finite totally ordered sets $[n]=(\{0,1,2,\hdots, n\},\leq)$, with order-preserving functions as our morphisms. Let $\text{PreSh}(\rce /X)$ be the category of presheaves on the real \'{e}tale site of $X$. A \textit{simplicial presheaf on} $X$ is a functor $\mathscr{F}:\Delta^{\text{op}} \to \text{PreSh}(\rce /X)$. A simplicial presheaf is a \textit{simplicial sheaf} if, for each $[n]$ and real \'{e}tale cover $\{ U_{\lambda} \}_{\lambda \in \Lambda}$, the sequence 
\begin{center}
\begin{tikzcd}
  \mathscr{F}_{n}(X) \ar[r] & {\displaystyle \prod_{\lambda \in \Lambda}\mathscr{F}_{n}(U_{\lambda})} \ar[r,bend left=10] \ar[r,bend right=10] &  {\displaystyle \prod_{\kappa,\lambda \in \Lambda} \mathscr{F}_{n}(U_{\kappa} \times_{X} U_{\lambda}) }
\end{tikzcd}
\end{center}
is an equalizer. We denote the collection of these sheaves $\mathscr{F}$ by $\Delta^{op}\text{Sh}(\rce /X)$. 
\end{defn}

\begin{ex}
Fix a real closed space $Y$. Consider the functor $\text{Hom}_{\rce /X}(-,Y)$ from $\rce /X$ to $\underline{Sets}$, and define \[  \mathscr{F}_{n}:=\text{Hom}_{\rce /X}(-,Y).\]
I claim $\mathscr{F}$ defines a simplicial sheaf on $X$, where each morphism in $\Delta$ is sent to the identity natural transformation $\text{Hom}_{\rce /X}(-,Y) \to \text{Hom}_{\rce /X}(-,Y)$. Since real closed spaces form a locally small category, $\text{Hom}_{\rce /X}(-,Y)$ is a set when evaluated on any real closed space $V$, so $\mathscr{F}$ is a simplicial presheaf. To prove $\mathscr{F}$ is a simplicial sheaf, let $\{ U_{\lambda} \}_{\lambda \in \Lambda}$ be a real \'{e}tale cover of $X$, and let $n$ be nonnegative. Given a section \[ (\sigma_{\lambda})_{\lambda \in \Lambda} \in  {\displaystyle \prod_{\lambda \in \Lambda}\mathscr{F}_{n}(U_{\lambda})} \] such that for each pair $\lambda, \kappa \in \Lambda$,
\[  \sigma_{\lambda} \biggr\lvert_{U_{\kappa} \times_{X} U_{\lambda}}=\sigma_{\kappa} \biggr\lvert_{U_{\kappa} \times_{X} U_{\lambda}}. \]
We can give a function $\sigma \in \mathscr{F}_{n}(X)=\text{Hom}_{\rce /X}(X,Y)$ which restricts to $\sigma_{\lambda}$ on $U_{\lambda}$, proving sections of $\mathscr{F}_n$ glue. To show that this gluing is unique, we map the singleton 'test set' $\{t\}$ to our diagram;
\begin{center}
\begin{tikzcd}
  \mathscr{F}_{n}(X) \ar[r] & {\displaystyle \prod_{\lambda \in \Lambda}\mathscr{F}_{n}(U_{\lambda})} \ar[r,bend left=10] \ar[r,bend right=10] &  {\displaystyle \prod_{\kappa,\lambda \in \Lambda} \mathscr{F}_{n}(U_{\kappa} \times_{X} U_{\lambda}) } \\ \{t\}. \ar[ru] &  &
\end{tikzcd}
\end{center}
Glueing functions of sets gives a factorization 
\begin{center}
\begin{tikzcd}
  \mathscr{F}_{n}(X) \ar[r] & {\displaystyle \prod_{\lambda \in \Lambda}\mathscr{F}_{n}(U_{\lambda})} \ar[r,bend left=10] \ar[r,bend right=10] &  {\displaystyle \prod_{\kappa,\lambda \in \Lambda} \mathscr{F}_{n}(U_{\kappa} \times_{X} U_{\lambda}) } \\ \{t\}, \ar[u,dotted,"\exists"] \ar[ru] &  &
\end{tikzcd}
\end{center}
and this factorization is unique since functions $\text{Hom}_{\rce /X}(X,Y)$ are well-defined by their restrictions to the opens in our real \'{e}tale cover. 
\end{ex}

For any morphism $f:X \to Y$ of real closed spaces, there is a  functor 
\[  f_{\ast}:\text{Sh}(\rce /X) \to \text{Sh}(\rce /Y), \]
which takes a sheaf $\mathscr{F}$ on $X$ and outputs a sheaf $f_{\ast}\mathscr{F}$ on $Y$. On a covering open $V \to Y$ of $Y$, 
\[   f_{\ast}\mathscr{F}(V):=\mathscr{F}(f^{-1}(V)),   \]
where 
\begin{center}
\begin{tikzcd}
f^{-1}(V) \ar[r] \ar[d] & X \ar[d,"f"] \\ V \ar[r] & Y
\end{tikzcd}
\end{center}
is a fibered square. Consequently, we have a morphism
\[  \text{Hom}_{\rce /Y}(-,Y) \to f_{\ast}\text{Hom}_{\rce /X}(-,X)    \]
of sheaves, given by taking the real closed \'{e}tale map $V \to Y$ to the real closed \'{e}tale map $f^{-1}(V) \to X$.

\begin{prop}
A morphism $f:\mathscr{X} \to \mathscr{Y}$ in $\Delta^{\text{op}}\text{Sh}(\rce/X)$ is a monomorphism if and only if, for each $n \geq 0$ and each $\rce$-morphism $U \to X$, the set map
\[  f_{n,U}:\mathscr{X}_{n}(U) \to \mathscr{Y}_{n}(U)  \]
is a monomorphism.
\end{prop}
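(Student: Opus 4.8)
The plan is to reduce the statement to the standard categorical facts that monomorphisms in a functor category are detected levelwise and that monomorphisms of sheaves of sets are detected pointwise, using throughout the characterization of a monomorphism $f$ as a morphism for which the square having $f$ along two sides and identities along the other two is cartesian, together with the fact that limits in functor categories are computed objectwise.

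\textbf{Step 1 (unwind the simplicial sheaf category).} By definition $\Delta^{\text{op}}\text{Sh}(\rce/X) = \mathrm{Fun}(\Delta^{\text{op}},\text{Sh}(\rce/X))$, and $\text{Sh}(\rce/X)$, being a topos, has all small limits. Limits in a functor category valued in a complete category are computed levelwise, so the self-pullback square of $f$ is cartesian in $\Delta^{\text{op}}\text{Sh}(\rce/X)$ iff the self-pullback square of $f_n\colon\mathscr{X}_n\to\mathscr{Y}_n$ is cartesian in $\text{Sh}(\rce/X)$ for every $n\ge 0$; equivalently, $f$ is a monomorphism iff each $f_n$ is a monomorphism of sheaves.

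\textbf{Step 2 (pass to presheaves).} The inclusion $i\colon\text{Sh}(\rce/X)\hookrightarrow\text{PreSh}(\rce/X)$ is fully faithful and has a left adjoint (sheafification), hence preserves limits (as a right adjoint) and reflects monomorphisms (as a fully faithful functor). So $f_n$ is a monomorphism in $\text{Sh}(\rce/X)$ iff $i(f_n)$ is one in $\text{PreSh}(\rce/X)$. \textbf{Step 3 (detect pointwise).} Now $\text{PreSh}(\rce/X)=\mathrm{Fun}((\rce/X)^{\text{op}},\underline{Sets})$, and again pullbacks are objectwise, so $i(f_n)$ is a monomorphism iff for every object $(U\to X)$ of $\rce/X$ the map $f_{n,U}\colon\mathscr{X}_n(U)\to\mathscr{Y}_n(U)$ is a monomorphism in $\underline{Sets}$, i.e.\ an injection. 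Chaining Steps 1--3 yields exactly the claimed equivalence.

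If one prefers a hands-on argument for the backward implication, take $a,b\colon\mathscr{Z}\rightrightarrows\mathscr{X}$ with $fa=fb$, note that a morphism of simplicial sheaves is determined by its components $a_{n,U}$, and cancel the injective maps $f_{n,U}$ levelwise to get $a=b$; the forward implication is the one where the pullback-square formulation pays off, sparing us the construction of a free simplicial sheaf generated by a single section over $U$. The main obstacle here is not any single computation but the bookkeeping about which functors preserve and reflect limits: one must keep in mind that sheafification is needed only for \emph{colimits}, while limits of sheaves are computed as in presheaves and objectwise, so no sheafification-induced quotients intervene in the argument.
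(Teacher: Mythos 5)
Your proof is correct, but it follows a genuinely different route from the paper's. The paper argues directly with a pair of test morphisms $g^{1},g^{2}:\mathscr{T}\rightrightarrows\mathscr{X}$: for the backward direction it cancels the injective maps $f_{n,U}$ componentwise (essentially your hands-on remark, plus a gluing step that is not really needed, since equality of all components already gives equality of natural transformations); for the forward direction it simply asserts that $f$ being a monomorphism forces $g^{1}_{n,U}=g^{2}_{n,U}$ and then declares each $f_{n,U}$ injective, without ever producing, from two sections $x\neq y\in\mathscr{X}_{n}(U)$ with $f_{n,U}(x)=f_{n,U}(y)$, the pair of distinct morphisms out of a test object (a representable or free simplicial sheaf on a section over $U$) that this argument implicitly requires. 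Your route — monomorphism iff the self-pullback square is cartesian, limits in $\mathrm{Fun}(\Delta^{\mathrm{op}},\mathrm{Sh}(\rce/X))$ computed levelwise, the inclusion $\mathrm{Sh}\hookrightarrow\mathrm{PreSh}$ preserving limits and (being faithful) reflecting monomorphisms, and presheaf limits computed objectwise — sidesteps exactly this construction, as you note, and buys generality: it works verbatim for simplicial objects in any Grothendieck topos. What the paper's element-level approach buys is elementarity and fit with the surrounding text (it uses only the sheaf axiom and composition of natural transformations), but as written its "only if" half is the thinner of the two, and your argument is the one that closes that gap.
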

\begin{proof}
Let 
\begin{center}
\begin{tikzcd}
\mathscr{T} \ar[r,"g^{1}", bend left=30] \ar[r,"g^{2}", bend right=30,swap] & \mathscr{X} \ar[r,"f"]  & \mathscr{Y}
\end{tikzcd}    
\end{center}
be a commutative diagram, i.e. $f \circ g^{1}=f \circ g^2$. If $f$ is a monomorphism, $g^{1} =g^{2}$ by definition, so 
\[ g_{n,U}^{1}=g_{n,U}^{2}  \]
for all $U$ and $n$. We have shown $f_{n,U}$ is a monomorphism for all choices of $n$ and $U$. 
\par On the other hand, suppose $f_{n,U}$ is a monomorphism for any choice of $n$ and $U$. We then know $g_{n,U}^{1}=g_{n,U}^{2}$ for all $n$ and $U$. On a $\rce$ cover $\{U_{\lambda}\}_{\lambda \in \Lambda}$ of $X$, $g^{i}$ is a sheaf morphism, so for $\beta, \lambda \in \Lambda$
\[ g_{n,U_{\lambda}}^{1}\lvert_{U_{\lambda} \times_{X} U_{\beta}}=g_{n,U_{\beta}}^{1}\lvert_{U_{\lambda} \times_{X} U_{\beta}}.  \]
Hence the sections $g_{n,U_{\lambda}}^{i}$ glue to form a sheaf morphism $g_{n}^{i}$, and since these morphisms are equal on each open $U_\lambda$, 
\[  g_{n}^{1}=g_{n}^{2}  \]
for all $n$, proving our claim.
\end{proof}

\subsection{The Simplicial Model Category Structure}

\begin{defn}
A morphism $f:\mathcal{X} \to \mathcal{Y}$ of simplicial sheaves on the site $(\rce /T,\tau)$ is a \textit{weak equivalence} if for any point $x:\text{Sh}(\rce /T) \to \underline{Sets}$ of $(\rce /T,\tau)$, $x(f)$ is a weak equivalence of simplicial sets (i.e. a homotopy equivalence on geometric realizations). 
\end{defn}

\begin{thm}
We take the class of monomorphisms in $\Delta^{\text{op}}\text{Sh}(\rce /T)$ to be \textit{cofibrations}. The class of morphisms which have the right lifting property with respect to any cofibration which is also a weak equivalence are called \textit{fibrations}. The classes $(\textbf{W},\textbf{C},\textbf{F})$ give the category $\Delta^{\text{op}}\text{Sh}(\rce /T)$ the structure of a model category
\end{thm}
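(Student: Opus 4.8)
The plan is to verify the axioms for a model category --- bicompleteness, two-out-of-three for weak equivalences, closure of all three classes under retracts, the two lifting axioms, and the two factorization axioms --- following the approach of Joyal and Jardine to simplicial sheaves on a Grothendieck site. First I would record bicompleteness: $\text{Sh}(\rce/T)$ is a Grothendieck topos and hence has all small limits and colimits, and limits and colimits of simplicial objects are formed levelwise, so $\Delta^{\text{op}}\text{Sh}(\rce/T)$ inherits them. Two-out-of-three for $\textbf{W}$ is immediate from the definition, since a point $x$ preserves composition and weak equivalences of simplicial sets have two-out-of-three; throughout one uses that the points of $(\rce/T,\tau)$ form a jointly conservative family with stalks computed as filtered colimits (or, if one does not wish to assume enough points, one replaces ``point'' by ``Boolean localization'' exactly as in Jardine's treatment, which affects nothing below). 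Closure of $\textbf{W}$, $\textbf{C}$, $\textbf{F}$ under retracts is then inherited from simplicial sets together with the fact that monomorphisms in a topos are closed under retract.

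For the lifting axioms, one direction holds by the very definition of $\textbf{F}$. For the other direction I would first identify $\textbf{W}\cap\textbf{F}$ with the class of maps having the right lifting property against every monomorphism: such maps are characterized by a levelwise-surjective-after-sheafification criterion, and a stalkwise check shows they are precisely the stalkwise trivial Kan fibrations, hence weak equivalences; conversely any trivial fibration has this right lifting property by the standard obstruction argument. Granting the factorization axiom, the remaining half of the lifting axiom and the identification of $\textbf{W}\cap\textbf{C}$ with the maps having the left lifting property against all fibrations follow by the usual retract trick.

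The heart of the proof is factorization, carried out by the small object argument in two forms. Factoring a map as a cofibration followed by a trivial fibration uses the (small) set of monomorphisms between sheaves of cardinality below a fixed bound $\kappa$, where $\kappa$ is chosen larger than the cardinality of the set of covering sieves of each object of $\rce/T$; accessibility of the topos makes this routine. Factoring a map as a trivial cofibration followed by a fibration is the hard step --- and the one I expect to be the main obstacle --- because there is no manifest small generating set of trivial cofibrations. The device is the bounded cofibration lemma: for any trivial cofibration $\mathcal{A}\hookrightarrow\mathcal{B}$ and any $\kappa$-bounded subobject $\mathcal{C}\subseteq\mathcal{B}$ there is a $\kappa$-bounded $\mathcal{D}$ with $\mathcal{C}\subseteq\mathcal{D}\subseteq\mathcal{B}$ such that $\mathcal{A}\cap\mathcal{D}\hookrightarrow\mathcal{D}$ is again a trivial cofibration; this is proved by a countable recursion that successively enlarges $\mathcal{C}$ to absorb, on stalks, witnesses that the relative homotopy sheaves of $\mathcal{A}\hookrightarrow\mathcal{B}$ vanish, using exactness of filtered colimits of sheaves and their commutation with homotopy groups. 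With the lemma in hand, the set of $\kappa$-bounded trivial cofibrations generates, via the small object argument, a functorial factorization $\mathcal{X}\hookrightarrow\mathcal{Z}\to\mathcal{Y}$ whose first map is a transfinite composite of pushouts of trivial cofibrations --- hence itself a trivial cofibration, once one checks stalkwise that trivial cofibrations are closed under pushout and transfinite composition --- and whose second map has the right lifting property against every trivial cofibration, i.e. is a fibration. Finally, the simplicial structure is supplied by the function complex $\mathbf{Hom}(\mathcal{X},\mathcal{Y})_n=\mathrm{Hom}(\mathcal{X}\times\Delta^n,\mathcal{Y})$, with $\Delta^n$ the constant simplicial sheaf, and the compatibility of this bifunctor with $(\textbf{W},\textbf{C},\textbf{F})$ (the pushout-product axiom) is verified levelwise and on stalks exactly as for simplicial sets.
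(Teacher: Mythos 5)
Your proposal is correct and follows essentially the same route as the paper, which simply invokes Jardine's theorem that these classes define a simplicial model structure on simplicial sheaves over any Grothendieck site, with the factorization axiom handled via Boolean localization and the bounded cofibration condition. You have in effect unpacked that citation --- bicompleteness of the sheaf topos, two-out-of-three and retracts, the small object argument, and the bounded cofibration lemma as the key device for the trivial-cofibration/fibration factorization --- so your argument matches the paper's, only in greater detail.
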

\begin{proof}
This result follows directly from \cite{jardine}, since they prove these classes of morphisms give a simplicial model structure to the simplicial sheaves on \textit{any} Grothendieck site. 
\par To give a bit more detail, proving these three classes of morphisms satisfy all the axioms of a model category \underline{except} that each morphism $f:\mathscr{X} \to \mathscr{Y}$ has a factorization into a trivial cofibration followed by a fibration, is straightforward. To prove the 5th axiom of a model category, Jardine uses two techniques which are too complicated to fully explain here; Boolean Localization and The Bounded Cofibration Condition.  Boolean localization is used to show that we can check whether a morphism is a local weak equivalence on a Boolean Algebra instead of the given site. The Bounded Cofibration Condition is a theorem, which says as long as our site $\rce /T$ doesn't have too many morphisms, morphisms from a 'small' object can be factored into a monomorphism and a trivial cofibration.
\end{proof}

We may Bousfield localize this model structure with respect to the class of morphisms \[\{f:[0,1]_{k} \times_{T} X \to X\}_{k=\rho k,X \to T \text{ is } \rce} \] to form the homotopy category of simplicial sheaves. 


\section{The Future: Understanding Representable Cohomology on RCS}

I suspect we can further contextualize the result of \cite{no_deRham_huber} in the homotopy category formed above that there is no de Rham cohomology for semialgebraic spaces using the motivic homotopy theory and resulting representable cohomology theories developed here. It is unclear whether or not the theory of motives is necessary to contextualize Professor Huber's work, but I hope to investigate further in the future. I also suspect there is a version of gradient descent, using 'local subspaces like tangent bundles', which can be done in real closed spaces, especially if those real closed spaces are semialgebraic.

\bibliographystyle{plain}
\bibliography{citations_RCScohomology}

\end{document}